\documentclass[11pt]{article}
\pdfoutput=1

\usepackage{geometry}
\usepackage{amsmath}
\usepackage{amssymb}
\usepackage{amsthm}
\usepackage{mathtools}
\usepackage{mathrsfs}  

\usepackage{graphicx}
\usepackage{tikz}
\usepackage{tikz-cd}   
\usetikzlibrary{arrows.meta, positioning, decorations.markings, calc}
\usepackage[x11names,svgnames]{xcolor}

\usepackage{microtype}
\usepackage[T1]{fontenc}
\usepackage{lmodern}

\usepackage[numbers,sort&compress]{natbib}

\usepackage{hyperref}
\hypersetup{colorlinks=true,
            linkcolor=blue!70!black,
            citecolor=blue!70!black,
            urlcolor=blue!70!black}

\usepackage{doi}
\usepackage{aliascnt}
\usepackage[capitalise, nameinlink]{cleveref}

\usepackage{enumitem}
\usepackage{booktabs}  
\usepackage{appendix}  
\usepackage{caption}

\usepackage{amsfonts}
\usepackage{mathdots}

\usepackage{subcaption}   
\usepackage{float}         
\usepackage{stackengine}   
\usepackage{scalerel}      

\usepackage{comment}
\usepackage[colorinlistoftodos]{todonotes}

\usepackage{nicematrix}

\newtheoremstyle{ghriststyle}
    {12pt}                    
    {12pt}                    
    {\itshape}                
    {}                        
    {\scshape}                
    {.}                       
    {.5em}                    
    {}                        

\newtheoremstyle{ghristdefstyle}
    {12pt}                    
    {12pt}                    
    {\normalfont}             
    {}                        
    {\scshape}                
    {.}                       
    {.5em}                    
    {}                        

\newtheoremstyle{ghristremarkstyle}
    {8pt}                     
    {8pt}                     
    {\normalfont}             
    {}                        
    {\itshape}                
    {.}                       
    {.5em}                    
    {}                        

\theoremstyle{ghriststyle}
\newtheorem{theorem}{Theorem}[section]
\newaliascnt{lemma}{theorem}
\newtheorem{lemma}[lemma]{Lemma}
\aliascntresetthe{lemma}
\crefname{lemma}{Lemma}{lemmas}
\newaliascnt{proposition}{theorem}
\newtheorem{proposition}[proposition]{Proposition}
\aliascntresetthe{proposition}
\crefname{proposition}{Proposition}{propositions}
\newaliascnt{corollary}{theorem}

\aliascntresetthe{corollary}
\crefname{corollary}{Corollary}{corollaries}

\theoremstyle{ghristdefstyle}
\newtheorem{definition}[theorem]{Definition}

\theoremstyle{ghristremarkstyle}
\newaliascnt{remark}{theorem}
\newtheorem{remark}[remark]{Remark}
\aliascntresetthe{remark}
\crefname{remark}{Remark}{remarks}
\newaliascnt{note}{theorem}
\newtheorem{note}[note]{Note}
\aliascntresetthe{note}
\crefname{note}{Note}{notes}

\newcommand{\id}{\mathrm{id}}

\newcommand{\R}{\mathbb{R}}

\newcommand{\relu}{\mathrm{ReLU}}

\stackMath
\def\trianglelefteqslant{\ThisStyle{\mathrel{%
  \stackinset{r}{.75pt+.15\LMpt}{t}{.1\LMpt}%
    {\rule{.3pt}{1.1\LMex+.2ex}}{\SavedStyle\leqslant}%
}}}

\makeatletter

\newcommand{\thanksblock}[1]{\gdef\@thanksblock{#1}}
\gdef\@thanksblock{} 

\renewcommand{\maketitle}{%
  \begin{center}%
    {\LARGE\bfseries \@title \par}%
    \vskip 1em%
    {\large \lineskip .5em%
      \begin{tabular}[t]{c}%
        \@author
      \end{tabular}\par}%
    \ifx\@thanksblock\@empty\else
      \vskip .75em%
      {\small \@thanksblock\par}%
    \fi
  \end{center}%
  \vskip 1.5em%
}

\makeatother

\date{} 

\usepackage{titlesec}

\titleformat{\section}
    {\normalfont\large\bfseries}
    {\thesection.}
    {0.5em}
    {}

\titleformat{\subsection}
    {\normalfont\normalsize\bfseries}
    {\thesubsection.}
    {0.5em}
    {}

\titleformat{\subsubsection}
    {\normalfont\normalsize\itshape}
    {\thesubsubsection.}
    {0.5em}
    {}

\renewenvironment{abstract}{%
    \begin{center}
        \textsc{Abstract}
    \end{center}
    \begin{quotation}
    \noindent
    \small
}{%
    \end{quotation}
    \vspace{1em}
    \normalsize
}

\title{Neural Networks as Local-to-Global Computations}

\author{Vicente Bosca \and Robert Ghrist}

\thanksblock{%
Department of Mathematics, University of Pennsylvania\\
\texttt{vicenteg@sas.upenn.edu}\quad
\texttt{ghrist@math.upenn.edu}%
}

\begin{document}
\maketitle


\begin{abstract}
We construct a cellular sheaf from any feedforward ReLU neural network
by placing one vertex for each intermediate quantity in the forward
pass and encoding each computational step -- affine transformation,
activation, output -- as a restriction map on an edge.  The restricted
coboundary operator on the free coordinates is unitriangular, so its
determinant is~$1$ and the restricted Laplacian is positive definite
for every activation pattern.  It follows that the relative
cohomology vanishes and the forward pass output is the unique
harmonic extension of the boundary data.  The sheaf heat equation
converges exponentially to this output despite the state-dependent
switching introduced by piecewise linear activations.  Unlike the
forward pass, the heat equation propagates information
bidirectionally across layers, enabling pinned neurons that impose
constraints in both directions, training through local discrepancy
minimization without a backward pass, and per-edge diagnostics that
decompose network behavior by layer and operation type.  We validate
the framework experimentally on small synthetic tasks, confirming the
convergence theorems and demonstrating that sheaf-based training,
while not yet competitive with stochastic gradient descent, obeys
quantitative scaling laws predicted by the theory.
\end{abstract}



\section{Introduction}\label{sec:introduction}

A feedforward neural network computes by propagating data in one
direction: input enters at the first layer, passes through alternating
affine transformations and nonlinearities, and exits at the last.  In contrast,
a cellular sheaf on a graph assigns vector spaces to vertices and linear
maps to edges, encoding local consistency constraints whose global
resolution is governed bidirectionally by the sheaf Laplacian.  We connect the two:
the forward pass of a feedforward ReLU network is the unique harmonic
extension of boundary data on a cellular sheaf constructed from the
network's weights, biases, and activation functions.

The construction places one vertex for each intermediate quantity in
the forward pass -- pre-activations, post-activations, output -- and
one edge for each computational step.  Weight matrices and biases are
absorbed into extended restriction maps on the weight edges; the ReLU
nonlinearity becomes a diagonal projection on the activation edges.
Input data and bias coordinates are held fixed as boundary conditions.
The mathematical core of the embedding is a single structural
observation: the restricted coboundary operator $\delta_\Omega$ on the
free coordinates is a square, block lower-unitriangular matrix
(\cref{lemma:det1}).  The identity blocks on the diagonal come from the
downstream endpoint of each edge; the sub-diagonal blocks are the
restriction maps themselves -- ReLU projections, weight matrices, and
identity maps -- none of which contribute to the determinant.
Unitriangularity gives $\det\delta_\Omega = 1$, hence the restricted
Laplacian $\delta_\Omega^T\delta_\Omega$ is positive definite, the
relative cohomology vanishes, and the harmonic extension of the
boundary data is unique.  Forward substitution in the unitriangular
system reproduces the forward pass equations layer by layer
(\cref{prop:forward-pass}).

The sheaf heat equation converges exponentially to this harmonic
extension (\cref{thm:convergence}), but unlike the forward pass, it
propagates information bidirectionally across the graph.  Convergence
is not immediate: the ReLU creates a state-dependent sheaf whose
restriction maps switch discontinuously as pre-activations cross zero,
producing a continuous piecewise affine (CPWA) dynamical system.  The
convergence proof combines a Lyapunov argument -- the total discrepancy
$\|\delta\,x\|^2$ decreases along trajectories, including during
Filippov sliding at switching surfaces -- with a structural observation:
the feedforward cascade excludes spurious critical points on switching
surfaces for generic weights.  A second convergence theorem
(\cref{thm:final}) extends the result to nonlinear output activations
(sigmoid, softmax) via local adjoint techniques, drawing on the CPWA
convergence theory developed by Gould~\cite{gould2025thesis}.

This identification supports three constructions that exploit the
bidirectional and local character of sheaf diffusion.
\begin{enumerate}
\item 
\emph{Pinned neurons.}
Constraining a hidden neuron to a target value adds a penalty edge to
the sheaf graph.  The heat equation then propagates the constraint
upstream and downstream simultaneously, producing an equilibrium that
reflects a global compromise across the entire network rather than a
local downstream override.  The construction generalizes the weighted
reluctance mechanism of Hansen and Ghrist~\cite{hansen2021opinion} to
the neural setting (\cref{subsec:pinning}).
More generally, for any frozen activation pattern, clamping an
arbitrary mixture of inputs, hidden coordinates, and outputs yields a
local-minimizer completion of the rest
(\cref{note:partial-clamping}), turning a pretrained feedforward network
into a boundary-value problem solver.

\item 
\emph{Sheaf-based training.}
Evolving the trainable restriction maps (weight matrices) alongside the
$0$-cochain through local discrepancy minimization yields a training
algorithm in which each weight update depends only on the data at
adjacent vertices -- no backward pass is required in the usual
reverse-mode sense.  This places the construction near other local or
relaxation-based alternatives to backpropagation, including equilibrium
propagation and predictive-coding-style learning
\cite{scellier2017equilibrium,whittington2017predictive,millidge2022predictive},
while remaining distinct in that the computation is organized as sheaf
diffusion minimizing a global discrepancy functional.  Seely~\cite{seely2025sheaf} organizes linear predictive coding as sheaf
diffusion in a similar spirit, though the learning setting differs
(bilevel alternation of inference and weight updates rather than coupled
state-parameter dynamics).

The resulting joint
dynamics on coupled state and parameter sheaves extend the
opinion-expression framework of~\cite{bosca2026selective}.  A
stagnation bound from that framework predicts the scaling
$\beta \sim 1/n_{\mathrm{train}}$ for the weight learning rate, which
we confirm experimentally.  On small synthetic tasks, sheaf-based
training converges on all configurations tested but does not match the
final loss achieved by stochastic gradient descent
(\cref{subsec:training,subsec:timescale}).

\item 
\emph{Structural diagnostics.}
The sheaf Laplacian's spectrum and the per-edge discord decomposition
provide diagnostic tools with no feedforward counterpart.  The Fiedler
eigenvector consistently concentrates on the output pre-activation
stalk, identifying it as the information-flow bottleneck.  The spectral
gap stabilizes before the training loss does, suggesting that diffusion
geometry is set early in optimization.  These diagnostics apply to any
trained network embedded post hoc as a sheaf, not only to
sheaf-trained networks (\cref{subsec:diagnostics}).
\end{enumerate}

Our work is distinct from sheaf neural
networks~\cite{hansen2020sheaf,bodnar2022neural,barbero2022connection,hernandez2024joint}, which design new
graph neural network architectures by running sheaf diffusion as a
message-passing layer, and from quiver
representations~\cite{seigal2023principal,parada2020quiver}, which
describe linear network structure algebraically.  We do not propose a
new architecture: we embed an existing one in a sheaf-theoretic
framework and develop the consequences.  

The closest parallel is the concurrent work of
Seely~\cite{seely2025sheaf}, who formalizes linear predictive coding
networks as cellular sheaves and uses sheaf cohomology and Hodge
decompositions to diagnose learning failures in recurrent topologies.
Our construction handles nonlinear activations and affine maps (bias
terms); Seely's linear setting analyzes recurrent architectures and
exact cohomological phenomena that the piecewise linear case has not
yet reached.

Our construction draws on
machinery from the opinion dynamics framework
of~\cite{bosca2026selective} -- stubborn opinions become pinned neurons,
joint dynamics on state and structure sheaves become simultaneous
training of activations and weights, and timescale separation bounds
become learning rate schedules -- as well as the CPWA convergence theory
of Gould~\cite{gould2025thesis}.

The paper is organized as follows.
\Cref{sec:background} reviews feedforward networks and cellular sheaf
diffusion.  \Cref{sec:sheafyNN} presents the construction and proves
the harmonic extension result.  \Cref{sec:convergence} establishes
convergence.  \Cref{sec:extensions} develops pinned neurons, training,
and batch processing.  \Cref{sec:experiments} reports experimental
results.  \Cref{sec:discussion} discusses limitations, extensions to
other architectures, and open problems.


\section{Background}\label{sec:background}

Our construction bridges two mathematical worlds: feedforward ReLU networks
(\S\ref{subsec:NNs}) and cellular sheaves with their associated diffusion
dynamics (\S\ref{subsec:sheaves}). We review each in turn, establishing the
notation and results that \S\ref{sec:sheafyNN} will bring together. Readers
comfortable with either topic may skim the corresponding subsection; the
essential points to carry forward are the piecewise linear structure of ReLU
networks (Remark~\ref{rem:CPWA}) and the convergence of sheaf diffusion to
harmonic extensions (Theorem~\ref{thm:HG-restricted}).

\subsection{Feedforward ReLU Networks}
\label{subsec:NNs}

Consider a feedforward neural network with $k$ hidden layers and input
$\mathbf{x} \in \R^{n_0}$. Layer $\ell \in \{1, \ldots, k{+}1\}$ has $n_\ell$
neurons, weight matrix $W^{(\ell)} \in \R^{n_\ell \times n_{\ell-1}}$, and bias
vector $b^{(\ell)} \in \R^{n_\ell}$. Writing $\mathbf{a}^{(0)} \equiv \mathbf{x}$
for the input, each hidden layer computes
\begin{equation}\label{eq:forward-pass}
  \mathbf{z}^{(\ell)} = W^{(\ell)} \mathbf{a}^{(\ell-1)} + b^{(\ell)}, \qquad
  \mathbf{a}^{(\ell)} = \relu(\mathbf{z}^{(\ell)}), \qquad 1 \leq \ell \leq k,
\end{equation}
where $\relu(\cdot) = \max(\cdot, 0)$ is applied element-wise and
$\mathbf{z}^{(\ell)}$ denotes the pre-activation vector. The output layer
applies a final activation~$\phi$ chosen according to the task (softmax for
classification, identity for regression):
\begin{equation}\label{eq:output-layer}
  \hat{\mathbf{y}} = \phi\bigl(W^{(k+1)} \mathbf{a}^{(k)} + b^{(k+1)}\bigr).
\end{equation}

Each layer performs an affine transformation followed by a nonlinearity, and
the full network is their composition. In
\cref{sec:sheafyNN}, we rewrite each of these steps as a
\emph{linear} map between appropriately extended vector spaces, so that the
entire forward pass can be encoded in the restriction maps of a cellular
sheaf.

\begin{remark}[Piecewise linear structure]\label{rem:CPWA}
  The ReLU activation partitions $\R^{n_0}$ into finitely many convex
  polyhedral regions determined by the sign patterns of the pre-activations
  $\mathbf{z}^{(\ell)}$. Within each region the network implements an affine
  map, but the global function is nonlinear due to the region-dependent
  activation pattern. This is the \emph{continuous piecewise affine} (CPWA) structure studied
systematically in~\cite{montufar2014linearregions,gould2025thesis}; see
also~\cite{liu2023relu, bosca2025topological}. We return to this structure in
  \cref{sec:convergence}, where the state-dependence of the activation
  pattern on the evolving dynamics creates a switched system.
\end{remark}

\subsection{Cellular Sheaves and Sheaf Diffusion}
\label{subsec:sheaves}

Cellular sheaves assign vector spaces and linear maps to a graph in a way that
encodes local consistency constraints. The theory was initiated by
Curry~\cite{curry2014sheaves}; spectral and Laplacian aspects were developed by
Hansen and Ghrist~\cite{hansen2019toward}. We follow the
conventions of ~\cite{bosca2026selective}, which in turn
builds on the opinion dynamics framework
of~\cite{hansen2021opinion}. Readers familiar with sheaf Laplacians and
harmonic extension may skip to \cref{sec:sheafyNN}.

Let $G = (V, E)$ be a finite connected graph with a chosen orientation on each
edge.

\begin{definition}\label{def:cellular-sheaf}
A \emph{cellular sheaf} $\mathcal{F}$ on $G$ assigns:
\begin{enumerate}[label=(\roman*)]
  \item a finite-dimensional real inner product space $\mathcal{F}(v)$, called
    the \emph{stalk}, to each vertex $v \in V$,
  \item a finite-dimensional real inner product space $\mathcal{F}(e)$ to each
    edge $e \in E$, and
  \item a linear map
    $\mathcal{F}_{v \trianglelefteqslant e} \colon \mathcal{F}(v) \to
    \mathcal{F}(e)$, called a \emph{restriction map}, for each incident pair
    $v \trianglelefteqslant e$.
\end{enumerate}
\end{definition}

We identify each stalk with $\R^n$ for the appropriate~$n$, each inner product
with the standard one, and each restriction map with its representing matrix.
The spaces of \emph{$0$-cochains} and \emph{$1$-cochains} bundle all vertex and
edge data:
\begin{equation}\label{eq:cochains}
  C^0(G; \mathcal{F}) = \bigoplus_{v \in V} \mathcal{F}(v), \qquad
  C^1(G; \mathcal{F}) = \bigoplus_{e \in E} \mathcal{F}(e),
\end{equation}
each equipped with the direct sum inner product. A $0$-cochain
$x \in C^0(G; \mathcal{F})$ assigns a vector $x_v \in \mathcal{F}(v)$ to every
vertex. The \emph{coboundary operator}
$\delta \colon C^0(G; \mathcal{F}) \to C^1(G; \mathcal{F})$ is defined on each
oriented edge $e = u \to v$ by
\begin{equation}\label{eq:coboundary}
  (\delta x)_e
    = \mathcal{F}_{v \trianglelefteqslant e}\, x_v
    - \mathcal{F}_{u \trianglelefteqslant e}\, x_u.
\end{equation}
The value $(\delta x)_e$ measures the \emph{discrepancy} on edge $e$: it
vanishes precisely when the restriction maps applied to the data at both
endpoints produce the same vector in the edge stalk. The
\emph{sheaf Laplacian}
\begin{equation}\label{eq:laplacian}
  L_\mathcal{F} = \delta^T \delta \colon C^0(G; \mathcal{F}) \to C^0(G; \mathcal{F})
\end{equation}
is positive semidefinite, and the quadratic form
$\langle x, L_\mathcal{F} x \rangle = \|\delta x\|^2$ measures the total
discrepancy in the network. Its local action at vertex $v$ is
\begin{equation}\label{eq:laplacian-local}
  (L_\mathcal{F}\, x)_v
    = \sum_{e:\, v \trianglelefteqslant e}
      \mathcal{F}_{v \trianglelefteqslant e}^T
      \bigl(
        \mathcal{F}_{v \trianglelefteqslant e}\, x_v
        - \mathcal{F}_{w \trianglelefteqslant e}\, x_w
      \bigr),
\end{equation}
where the sum runs over edges incident to $v$ and $w$ denotes the other
endpoint of~$e$. Each vertex updates its data using only information from its
immediate neighbors, making the dynamics inherently local.

\begin{remark}[Opinion dynamics interpretation]\label{rem:opinion-dynamics}
  In the opinion dynamics framework
  of~\cite{hansen2021opinion,bosca2026selective}, the stalk $\mathcal{F}(v)$
  is an agent's private opinion space, the edge stalk $\mathcal{F}(e)$ is a
  shared discourse space, and restriction maps encode how agents express
  private opinions to neighbors. The coboundary measures disagreement, and
  the sheaf Laplacian drives agents toward consensus. This vocabulary will
  occasionally resurface: we refer to the quadratic form
  $\|\delta x\|^2$ as the \emph{discord} of the $0$-cochain $x$, and to
  vertices with fixed data as \emph{stubborn}.
\end{remark}

The \emph{space of global sections} is
\begin{equation}\label{eq:global-sections}
  H^0(G; \mathcal{F}) = \ker \delta = \ker L_\mathcal{F}.
\end{equation}
The equality $\ker \delta = \ker L_\mathcal{F}$ is the sheaf Hodge
theorem~\cite{hansen2019toward}. A global section is a $0$-cochain with
zero discrepancy on every edge: a consistent assignment of data across the
entire network.

The sheaf heat equation drives an arbitrary $0$-cochain toward such a
consistent state.

\begin{theorem}[Convergence of sheaf diffusion
  {\cite[Theorem~4.1]{hansen2021opinion}}]
\label{thm:HG-heat}
  Solutions to the sheaf heat equation
  \begin{equation}\label{eq:heat}
    \frac{dx}{dt} = -\alpha\, L_\mathcal{F}\, x, \qquad \alpha > 0,
  \end{equation}
  converge exponentially to the orthogonal projection of $x(0)$ onto
  $H^0(G; \mathcal{F})$. The rate of convergence is governed by the smallest
  positive eigenvalue of $L_\mathcal{F}$.
\end{theorem}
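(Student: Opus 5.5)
Since $L_\mathcal{F} = \delta^T\delta$ is a real symmetric positive semidefinite operator on the finite-dimensional inner product space $C^0(G;\mathcal{F})$, the plan is to diagonalize it and solve \eqref{eq:heat} mode by mode. By the spectral theorem there is an orthonormal basis $\{u_i\}$ of $C^0(G;\mathcal{F})$ with $L_\mathcal{F} u_i = \lambda_i u_i$ and $\lambda_i \ge 0$. The zero eigenspace is exactly $\ker L_\mathcal{F}$. By the Hodge identity \eqref{eq:global-sections}, this is $H^0(G;\mathcal{F})$. I would check that identity directly: if $L_\mathcal{F} x = 0$, then $\|\delta x\|^2 = \langle x, L_\mathcal{F} x\rangle = 0$.

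Next, the heat equation is linear with constant coefficients, so its unique solution is $x(t) = e^{-\alpha t L_\mathcal{F}} x(0)$. Expanding $x(0) = \sum_i c_i u_i$ with $c_i = \langle x(0), u_i\rangle$ gives
\begin{equation*}
  x(t) = \sum_{\lambda_i = 0} c_i u_i + \sum_{\lambda_i > 0} e^{-\alpha \lambda_i t} c_i u_i .
\end{equation*}
The first sum is independent of $t$. Because the basis is orthonormal, it is precisely the orthogonal projection $P x(0)$ of $x(0)$ onto $H^0(G;\mathcal{F})$.

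For the rate, let $\lambda_* = \min\{\lambda_i : \lambda_i > 0\}$, the smallest positive eigenvalue. (If no positive eigenvalue exists, then $L_\mathcal{F}=0$ and $x(t)\equiv x(0)=Px(0)$, so there is nothing to prove.) By Parseval,
\begin{equation*}
  \|x(t) - P x(0)\|^2 = \sum_{\lambda_i>0} e^{-2\alpha\lambda_i t} c_i^2 \le e^{-2\alpha\lambda_* t}\,\|x(0) - P x(0)\|^2 .
\end{equation*}
This gives exponential convergence at rate $\alpha\lambda_*$. The rate is sharp: if $x(0)$ has a nonzero component along a $\lambda_*$-eigenvector, that component decays at exactly this rate.

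There is no real obstacle here, since this is standard finite-dimensional linear ODE theory. The only point needing care is identifying the limit with the projection onto $H^0$. This rests on self-adjointness of $L_\mathcal{F}$, which makes $\ker L_\mathcal{F}$ orthogonal to its range, together with $\ker L_\mathcal{F} = \ker\delta$. An alternative derivation is that $P x(t)$ is conserved, since $\frac{d}{dt} P x = -\alpha P L_\mathcal{F} x = 0$.
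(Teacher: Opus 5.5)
Your proof is correct and is the standard argument. The paper itself gives no proof and simply cites Hansen and Ghrist, whose argument is the same one you give: write $x(t)=e^{-\alpha t L_\mathcal{F}}x(0)$, diagonalize the symmetric positive semidefinite $L_\mathcal{F}$, identify the non-decaying part with the orthogonal projection onto $\ker L_\mathcal{F}=\ker\delta=H^0(G;\mathcal{F})$, and bound the rest by $e^{-\alpha\lambda_* t}$.
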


When a subset $U \subseteq V$ of vertices maintains fixed data, the relevant
dynamics act only on the complement $\Omega = V \setminus U$. Writing
$\omega = x|_\Omega$ for the restriction of the $0$-cochain to free vertices
and $u = x|_U$ for the fixed boundary data, the $U$-restricted dynamics take
the reduced form
\begin{equation}\label{eq:U-restricted}
  \frac{d\omega}{dt}
    = -\alpha\bigl(
        L_\mathcal{F}[\Omega,\Omega]\,\omega
        + L_\mathcal{F}[\Omega,U]\,u
      \bigr),
\end{equation}
where $L_\mathcal{F}[\cdot,\cdot]$ denotes the corresponding block of the
sheaf Laplacian.

\begin{theorem}[$U$-restricted dynamics
  {\cite[Theorem~5.1]{hansen2021opinion}}]
\label{thm:HG-restricted}
  The dynamics~\eqref{eq:U-restricted} converge exponentially to the
  \emph{harmonic extension} of $u$: the unique $0$-cochain $x^*$ satisfying
  $x^*|_U = u$ and $(L_\mathcal{F}\, x^*)_v = 0$ for all $v \in \Omega$.
  Uniqueness holds whenever the relative cohomology
  $H^0(G, U; \mathcal{F}) = 0$.
\end{theorem}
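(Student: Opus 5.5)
Since $U$ stays fixed, I will treat \eqref{eq:U-restricted} as a linear ODE in $\omega$ with constant forcing. The plan is to show that the matrix $A := L_\mathcal{F}[\Omega,\Omega]$ is positive definite whenever $H^0(G,U;\mathcal{F}) = 0$, and then read off exponential convergence from its spectrum.

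\textbf{Step 1: write $A$ through a restricted coboundary.} Split $C^0 = C^0_\Omega \oplus C^0_U$ and write $\delta = [\,\delta_\Omega \;\; \delta_U\,]$, where $\delta_\Omega$ is the coboundary applied to cochains supported on $\Omega$. Because $L_\mathcal{F} = \delta^T\delta$, its blocks are
\[
L_\mathcal{F}[\Omega,\Omega] = \delta_\Omega^T\delta_\Omega, \qquad L_\mathcal{F}[\Omega,U] = \delta_\Omega^T\delta_U .
\]
The relative cohomology $H^0(G,U;\mathcal{F})$ is the space of global sections vanishing on $U$, which is exactly $\ker\delta_\Omega$. If that space is zero, then $\langle\omega, A\omega\rangle = \|\delta_\Omega\omega\|^2 > 0$ for every $\omega \neq 0$, so $A$ is positive definite. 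Its smallest eigenvalue $\lambda_{\min}$ is then positive.

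\textbf{Step 2: existence and uniqueness of the harmonic extension.} The condition $(L_\mathcal{F} x^*)_v = 0$ for all $v \in \Omega$, together with $x^*|_U = u$, amounts to the linear system $A\omega^* + L_\mathcal{F}[\Omega,U]\,u = 0$. Since $A$ is invertible, this system has exactly one solution, $\omega^* = -A^{-1}L_\mathcal{F}[\Omega,U]\,u$. Equivalently, $\omega^*$ is the unique minimizer of the strictly convex discord $\|\delta_\Omega\omega + \delta_U u\|^2$. This settles the uniqueness clause.

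\textbf{Step 3: exponential convergence.} Set $e = \omega - \omega^*$. Substituting into \eqref{eq:U-restricted}, the forcing term cancels and we get $\dot e = -\alpha A e$. Hence $e(t) = e^{-\alpha A t}e(0)$, and because $A$ is symmetric positive definite,
\[
\|e(t)\| \le e^{-\alpha\lambda_{\min} t}\,\|e(0)\| .
\]
This is the claimed exponential convergence, with rate $\alpha\lambda_{\min}(A)$.

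\textbf{The main subtlety} is the hypothesis, not the estimates. Without $H^0(G,U;\mathcal{F}) = 0$, the matrix $A$ is only positive semidefinite. The flow still converges, because the forcing $\delta_\Omega^T\delta_U u$ lies in $\operatorname{im}\delta_\Omega^T = (\ker A)^\perp$. So the component of $\omega$ in $\ker A$ is constant in time, while the orthogonal component contracts. In that case the limit is the harmonic extension nearest to $\omega(0)$, which is the analogue of \cref{thm:HG-heat}. This is consistent with the theorem asserting uniqueness only under the vanishing condition.
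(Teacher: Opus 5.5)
Your argument is correct and is the standard one. The paper does not prove this result itself but cites it from Hansen--Ghrist; it does, however, run exactly your mechanism in its own setting: it writes $L_{\mathcal{F}}[\Omega,\Omega]=\delta_\Omega^T\delta_\Omega$, gets positive definiteness from injectivity of $\delta_\Omega$ (supplied there by unitriangularity in \cref{lemma:det1}, which is precisely the vanishing of $H^0(G,U;\mathcal{F}_R)$), characterizes the harmonic extension by $\delta_\Omega^T(\delta_\Omega\omega^*+\delta_U u)=0$ as in \cref{prop:forward-pass}, and reads off the rate $\alpha\lambda_{\min}$ from the linear error equation as in the proof of \cref{thm:convergence}. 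Your closing remark on the degenerate case is also correct and goes slightly beyond what the statement requires: the forcing lies in $\operatorname{im}\delta_\Omega^T=\bigl(\ker L_{\mathcal{F}}[\Omega,\Omega]\bigr)^\perp$, the kernel component stays frozen, and the limit is the harmonic extension nearest $\omega(0)$.
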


The harmonic extension minimizes the total discrepancy
$\|\delta x\|^2$ subject to the boundary conditions imposed by $U$. In the
context of our neural network construction (\cref{sec:sheafyNN}), the input
vertex plays the role of a stubborn vertex, and the forward pass
output emerges as the harmonic extension of their fixed values.

\medskip

\Cref{thm:HG-heat,thm:HG-restricted} apply to sheaves with fixed, linear
restriction maps. Two generalizations are needed for the neural network
setting.

The first concerns \emph{evolving restriction maps}. When both vertex data
and restriction maps adapt simultaneously to reduce discrepancy, one obtains
a joint dynamics on the state and the sheaf structure. This framework was
introduced by Hansen and Ghrist~\cite{hansen2021opinion} and extended in ~\cite{bosca2026selective}, which shows that selective
constraints on both vertex data and restriction maps can be treated
uniformly as constrained diffusion on appropriate sheaves, following the
interpretation of such constraints as affine sheaf structures developed by
Gould~\cite{gould2025thesis}. We draw on this framework in
\cref{sec:extensions}, where evolving restriction maps correspond to weight
training.

The second concerns \emph{nonlinear restriction maps}. The definitions above
require each $\mathcal{F}_{v \trianglelefteqslant e}$ to be a linear map,
which restricts the class of operations that a sheaf can encode.
Gould~\cite{gould2025thesis} extends the convergence theory to sheaves
whose restriction maps are continuous piecewise affine (CPWA), using
Filippov theory and generalized gradient techniques. This extension opens
the door to encoding the components of feedforward ReLU networks within the
cellular sheaf framework, as we develop in \cref{sec:sheafyNN}.


\section{The Neural Sheaf Construction}\label{sec:sheafyNN}

We now embed a feedforward ReLU network into a cellular sheaf so that the
forward pass computation becomes a question of local-to-global consistency.
The construction assigns a vertex to each intermediate quantity in the forward
pass and encodes each computational step as a restriction map on an edge. The
input values are held fixed as boundary conditions, and the network output
emerges as the unique harmonic extension of these boundary data.

\subsection{From Networks to Sheaves}\label{subsec:networks-to-sheaves}

The forward pass~\eqref{eq:forward-pass} alternates affine transformations
with ReLU nonlinearities. To encode each step as a linear map between stalks,
we introduce extended representations that absorb biases into the weight
matrices.

For each layer $\ell \in \{1, \ldots, k{+}1\}$, define the
\emph{extended weight matrix}
\begin{equation}\label{eq:extended-weight}
  \overline{W}^{(\ell)}
    \coloneqq \begin{pmatrix} W^{(\ell)} & B^{(\ell)} \end{pmatrix}
    \in \R^{n_\ell \times (n_{\ell-1} + n_\ell)},
  \qquad B^{(\ell)} = \mathrm{diag}(b^{(\ell)}).
\end{equation}
For each hidden layer $\ell \in \{1, \ldots, k\}$, define the
\emph{extended activation vector}
\begin{equation}\label{eq:extended-activation}
  \overline{\mathbf{a}}^{(\ell)}
    \coloneqq \begin{pmatrix} \mathbf{a}^{(\ell)} \\ \mathbf{1}_{n_{\ell+1}} \end{pmatrix}
    \in \R^{n_\ell + n_{\ell+1}},
\end{equation}
and set
$\overline{\mathbf{a}}^{(0)} \coloneqq (\mathbf{x}^T,\,
\mathbf{1}_{n_1}^T)^T$ for the input. The ones block in each extended vector
carries the bias information for the next layer: combined with the diagonal
matrix $B^{(\ell)}$ in~$\overline{W}^{(\ell)}$, it produces the bias term
$b^{(\ell)}$ through ordinary matrix multiplication. This ones block is held
fixed throughout the dynamics, making it a \emph{stubborn direction} in the
sense of~\cite{bosca2026selective}; see also the affine sheaf interpretation
in~\cite{gould2025thesis}.

The ReLU activation at layer~$\ell$ is encoded by the diagonal
\emph{ReLU matrix} $R^{\mathbf{z}^{(\ell)}} \in \R^{n_\ell \times n_\ell}$
with entries
\begin{equation}\label{eq:relu-matrix}
  R^{\mathbf{z}^{(\ell)}}_{jj}
    = \begin{cases}
        1 & \text{if } \mathbf{z}^{(\ell)}_j \geq 0,\\
        0 & \text{if } \mathbf{z}^{(\ell)}_j < 0.
      \end{cases}
\end{equation}
Since $R^{\mathbf{z}^{(\ell)}}$ is diagonal with entries in $\{0,1\}$, it is
an orthogonal projection:
$(R^{\mathbf{z}^{(\ell)}})^2 = R^{\mathbf{z}^{(\ell)}}$ and
$(R^{\mathbf{z}^{(\ell)}})^T = R^{\mathbf{z}^{(\ell)}}$. Finally, let
$P_{n_\ell} = \begin{pmatrix} I_{n_\ell} & 0 \end{pmatrix}$ denote the
projection onto the first $n_\ell$ coordinates. With these definitions, the
forward pass equations~\eqref{eq:forward-pass} become
$\mathbf{z}^{(\ell)} = \overline{W}^{(\ell)}\overline{\mathbf{a}}^{(\ell-1)}$
and
$\mathbf{a}^{(\ell)} = R^{\mathbf{z}^{(\ell)}} \mathbf{z}^{(\ell)}$:
each step is now a matrix multiplication.

We construct a cellular sheaf $\mathcal{F}$ on a graph $G = (V, E)$ encoding
the full $k$-layer forward pass. The underlying graph is a path with one
vertex for each intermediate quantity:
\begin{equation}\label{eq:graph-path}
  v_x \,\text{ -- }\, v_{z^{(1)}} \,\text{ -- }\, v_{a^{(1)}}
    \,\text{ -- }\, v_{z^{(2)}} \,\text{ -- }\, \cdots
    \,\text{ -- }\, v_{a^{(k)}} \,\text{ -- }\, v_{z^{(k+1)}}
    \,\text{ -- }\, v_{y}.
\end{equation}

\medskip
\noindent\textbf{Stalks.}
The stalk dimensions match the quantities they carry:
\begin{equation}\label{eq:stalks}
\begin{aligned}
  \mathcal{F}(v_x) &= \R^{n_0 + n_1},
    &\qquad \mathcal{F}(v_{z^{(\ell)}}) &= \R^{n_\ell},
    &\qquad &1 \leq \ell \leq k{+}1,\\
  \mathcal{F}(v_{a^{(\ell)}}) &= \R^{n_\ell + n_{\ell+1}},
    &\qquad \mathcal{F}(v_{y}) &= \R^{n_{k+1}}.
    &\qquad &1 \leq \ell \leq k.
\end{aligned}
\end{equation}
The extended stalks at $v_x$ and $v_{a^{(\ell)}}$ have two blocks. The first
block carries neural data (the input $\mathbf{x}$ at $v_x$, or the
activation $a^{(\ell)}$ at $v_{a^{(\ell)}}$). The second block holds
$\mathbf{1}_{n_{\ell+1}}$, which is fixed and propagates bias to the next
layer.

\medskip
\noindent\textbf{Restriction maps.}
Edges connect consecutive vertices, and restriction maps encode the forward
pass operations. Three types of edges appear.

A \emph{weight edge} $e_{z^{(\ell)}}$ connects $v_{a^{(\ell - 1)}}$ to $v_{z^{(\ell)}}$
for each $\ell \in \{1, \ldots, k{+}1\}$ (identifying $v_{a^{(0)}} \equiv v_x$),
with edge stalk $\R^{n_\ell}$ and restriction maps
\begin{equation}\label{eq:weight-edge}
  \mathcal{F}_{v_{a^{(\ell - 1)}} \trianglelefteqslant e_{z^{(\ell)}}}
    = \overline{W}^{(\ell)}, \qquad
  \mathcal{F}_{v_{z^{(\ell)}} \trianglelefteqslant e_{z^{(\ell)}}}
    = I_{n_\ell}.
\end{equation}
An \emph{activation edge} $e_{a^{(\ell)}}$ connects $v_{z^{(\ell)}}$ to $v_{a^{(\ell)}}$
for each $\ell \in \{1, \ldots, k\}$, with edge stalk $\R^{n_\ell}$ and
restriction maps
\begin{equation}\label{eq:activation-edge}
  \mathcal{F}_{v_{z^{(\ell)}} \trianglelefteqslant e_{a^{(\ell)}}}
    = R^{\mathbf{z}^{(\ell)}}, \qquad
  \mathcal{F}_{v_{a^{(\ell)}} \trianglelefteqslant e_{a^{(\ell)}}}
    = P_{n_\ell}.
\end{equation}
An \emph{output edge} $e_{y}$ connects $v_{z^{(k+1)}}$ to $v_{y}$,
with edge stalk $\R^{n_{k+1}}$ and restriction maps
\begin{equation}\label{eq:output-edge}
  \mathcal{F}_{v_{z^{(k+1)}} \trianglelefteqslant e_{y}} = \phi, \qquad
  \mathcal{F}_{v_{y} \trianglelefteqslant e_{y}} = I_{n_{k+1}}.
\end{equation}
When $\phi$ is the identity, this is a standard linear restriction map. When
$\phi$ is nonlinear (sigmoid, softmax), it falls outside the linear sheaf
framework; we address convergence in this case in
\cref{sec:convergence}.

\Cref{fig:k-layers} illustrates the correspondence for the general $k$-hidden layer
construction, and \cref{fig:nn-sheaf-correspondence} displays a single
hidden layer network.

\medskip

The input vertex $v_x$ is pinned to the extended input
$\overline{\mathbf{x}} = (\mathbf{x}^T,\, \mathbf{1}_{n_1}^T)^T$.  At each
post-activation vertex $v_{a^{(\ell)}}$, the second block of coordinates is held
fixed at $\mathbf{1}_{n_{\ell+1}}$ while the first block (the activation
$a^{(\ell)}$) is free to evolve.  All other vertices are fully
dynamic.  These fixed coordinates are the boundary data of the sheaf; the
remaining coordinates evolve under sheaf diffusion
(\cref{subsec:harmonic-extension}).

\begin{figure}[ht]
\centering
\begin{tikzpicture}[
    vertex/.style={circle, fill=white, draw=black, thick, minimum size=5pt, inner sep=0pt},
    edge stalk h/.style={rectangle, rounded corners=1pt, fill=white, draw=black, minimum width=16pt, minimum height=4pt, inner sep=0pt},
    edge stalk v/.style={rectangle, rounded corners=1pt, fill=white, draw=black, minimum width=4pt, minimum height=16pt, inner sep=0pt},
    arrow/.style={-{Stealth[length=4pt]}, thick},
    every node/.style={font=\small},
]

\node[vertex, draw=DarkRed] (vx) at (0, 2) {};

\node[vertex, draw=DarkGreen] (vz1) at (0, 0) {};

\node[vertex, draw=DarkOrange1] (va1) at (3, 0) {};

\node[vertex, draw=DarkGreen] (vz2) at (6, 0) {};

\node[vertex, draw=DarkOrange1] (vak) at (9.5, 0) {};

\node[vertex, draw=DarkGreen] (vzk1) at (12.5, 0) {};

\node[vertex, draw=DarkGreen] (vy) at (12.5, 2) {};

\node[above left] at (vx) {$\overline{\mathbf{x}}$};
\node[below left] at (vz1) {$z^{\scriptscriptstyle(1)}$};
\node[below] at (va1) {$\overline{a}^{\scriptscriptstyle(1)}$};
\node[below] at (vz2) {$z^{\scriptscriptstyle(2)}$};
\node[below] at (vak) {$\overline{a}^{\scriptscriptstyle(k)}$};
\node[below right] at (vzk1) {$z^{\scriptscriptstyle(k+1)}$};
\node[above right] at (vy) {$\hat{y}$};

\draw[thick] (vx) -- (vz1);

\draw[thick] (vz1) -- (va1);
\draw[thick] (va1) -- (vz2);
\draw[thick] (vz2) -- (7, 0);
\draw[thick] (8.5, 0) -- (vak);
\draw[thick] (vak) -- (vzk1);

\draw[thick] (vzk1) -- (vy);

\node at (7.75, 0) {$\cdots$};

\node[edge stalk v] (e1) at (0, 1) {};
\node[edge stalk h] (e2) at (1.5, 0) {};
\node[edge stalk h] (e3) at (4.5, 0) {};
\node[edge stalk h] (e4) at (11, 0) {};
\node[edge stalk v] (e5) at (12.5, 1) {};


\draw[arrow] (-0.3, 1.85) -- (-0.3, 1.15);
\draw[arrow] (-0.3, 0.15) -- (-0.3, 0.85);
\node[left, font=\scriptsize] at (-0.35, 1.5) {$\overline{W}^{\scriptscriptstyle(1)}$};
\node[left, font=\scriptsize] at (-0.35, 0.5) {$I_{n_1}$};

\draw[arrow] (0.35, -0.3) -- (1.3, -0.3);
\draw[arrow] (2.65, -0.3) -- (1.7, -0.3);
\node[below, font=\scriptsize] at (0.75, -0.35) {$R^{z^{(1)}}$};
\node[below, font=\scriptsize] at (2.25, -0.35) {$P_{n_1}$};

\draw[arrow] (3.35, -0.3) -- (4.3, -0.3);
\draw[arrow] (5.65, -0.3) -- (4.7, -0.3);
\node[below, font=\scriptsize] at (3.75, -0.35) {$\overline{W}^{\scriptscriptstyle(2)}$};
\node[below, font=\scriptsize] at (5.25, -0.35) {$I_{n_2}$};

\draw[arrow] (6.35, -0.3) -- (7.3, -0.3);
\node[below, font=\scriptsize] at (6.75, -0.35) {$R^{z^{(2)}}$};

\draw[arrow] (9.15, -0.3) -- (8.2, -0.3);
\node[below, font=\scriptsize] at (8.75, -0.35) {$P_{n_k}$};

\draw[arrow] (9.85, -0.3) -- (10.8, -0.3);
\draw[arrow] (12.15, -0.3) -- (11.2, -0.3);
\node[below, font=\scriptsize] at (10.25, -0.35) {$\overline{W}^{\scriptscriptstyle(k+1)}$};
\node[below, font=\scriptsize] at (11.75, -0.35) {$I_{n_{k+1}}$};

\draw[arrow] (12.8, 1.85) -- (12.8, 1.15);
\draw[arrow] (12.8, 0.15) -- (12.8, 0.85);
\node[right, font=\scriptsize] at (12.85, 1.5) {$I_{n_{k+1}}$};
\node[right, font=\scriptsize] at (12.85, 0.5) {$\phi$};

\end{tikzpicture}
\caption{Neural sheaf encoding a $k$-hidden layer ReLU network. Red nodes are stubborn (boundary conditions); green and yellow nodes are dynamic, with yellow nodes having a fixed component. Restriction maps $\overline{W}^{(\ell)}$ encode weights and biases, $R^{z^{(\ell)}}$ encodes ReLU activation, $P_{n_\ell}$ projects onto the first $n_\ell$ coordinates, and $\phi$ is the final activation function.}
\label{fig:k-layers}
\end{figure}
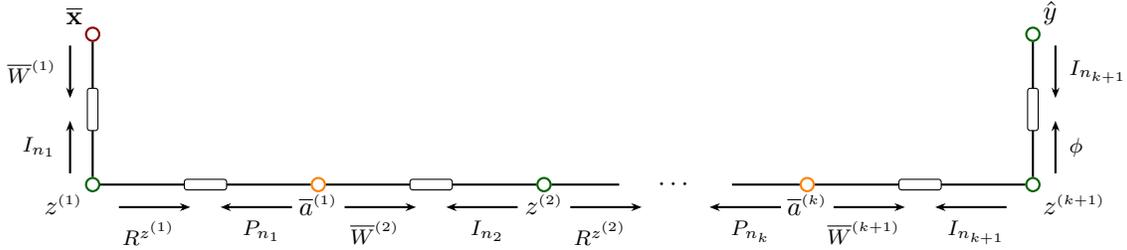

The restriction maps are designed so that local consistency on each edge
reproduces one step of the forward pass. On a weight edge, vanishing
discrepancy requires
$\overline{W}^{(\ell)}\overline{a}^{(\ell-1)} =
z^{(\ell)}$; on an activation edge, it requires
$R^{z^{(\ell)}} z^{(\ell)} = a^{(\ell)}$. A
$0$-cochain that is locally consistent on every edge therefore encodes the
complete forward pass. In \cref{subsec:harmonic-extension} we show that sheaf
diffusion with the fixed boundary data converges to precisely this consistent
state.

\subsection{The Forward Pass as Harmonic Extension}
\label{subsec:harmonic-extension}

The sheaf $\mathcal{F}$ from \cref{subsec:networks-to-sheaves} comes equipped
with boundary data: the input stalk at $v_x$ is fixed to
$\overline{\mathbf{a}}^{(0)}$, and the ones block in each extended activation
$\overline{a}^{(\ell)}$ is held at $\mathbf{1}_{n_{\ell+1}}$. The
remaining coordinates are free. We collect the fixed coordinates into a
vector~$u$ and the free coordinates into~$\omega$, so that the $0$-cochain
decomposes as $\overline{\omega} = u \oplus \omega$. This is analogous to the
$U$-restricted dynamics of \cref{thm:HG-restricted}, though here the
fixed data consists of individual coordinates within stalks rather than
entire vertex stalks. ~\cite{bosca2026selective} formalizes
this coordinate-level decomposition through constrained opinion dynamics on
sheaves with stubborn directions; for our purposes, the informal
description suffices.

Applying the sheaf heat equation to the free coordinates gives the restricted
dynamics
\begin{equation}\label{eq:neural-restricted}
  \frac{d\omega}{dt}
    = -\alpha\bigl(
        L_{\mathcal{F}_t}[\Omega,\Omega]\,\omega
        + L_{\mathcal{F}_t}[\Omega,U]\,u
      \bigr),
\end{equation}
where $L_{\mathcal{F}_t}[\Omega,\Omega]$ and $L_{\mathcal{F}_t}[\Omega,U]$ denote
the blocks of the sheaf Laplacian restricted to the free and fixed coordinate
subspaces, respectively. Each free coordinate is driven by discrepancies on
its incident edges: the pre-activation $z^{(\ell)}$ is pulled toward
agreement with $\overline{W}^{(\ell)}\overline{a}^{(\ell-1)}$ via
its weight edge, while the activation $a^{(\ell)}$ is pulled toward
$R^{z^{(\ell)}} z^{(\ell)}$ via its activation edge and toward
$z^{(\ell+1)}$ via the subsequent weight edge.

\begin{remark}[State-dependent restriction maps]\label{rem:state-dependent}
  The diagonal ReLU matrix $R^{z^{(\ell)}}$ depends on the sign pattern of
  the current pre-activation $z^{(\ell)}$. Since $z^{(\ell)}$ is part of
  the evolving $0$-cochain, the restriction maps on the activation edges
  change with the dynamics, creating a \emph{state-dependent sheaf}
  $\mathcal{F}_t$: the Laplacian $L_{\mathcal{F}_t}$ and the
  dynamics~\eqref{eq:neural-restricted} are piecewise constant, switching
  when a pre-activation coordinate crosses zero. For each fixed activation
  pattern the sheaf is an ordinary cellular sheaf and the classical theory
  of \cref{thm:HG-heat,thm:HG-restricted} applies. Convergence
  \emph{despite} the switching is the subject of \cref{sec:convergence}.
\end{remark}

%

The existence and uniqueness of a harmonic extension, and its
identification with the forward pass, rest on a single structural
property of the coboundary operator.

For any frozen activation pattern, write $\mathcal{F}_R$ for the
cellular sheaf whose activation-edge restriction maps use the
corresponding diagonal ReLU matrices $R^{(\ell)}$. Decompose each
$0$-cochain as $\overline{\omega} = u \oplus \omega$, where $u$ collects the fixed
boundary data (input, bias directions) and $\omega$ the free
coordinates.  The coboundary decomposes accordingly:
$\delta \overline{\omega} = \delta_\Omega\, \omega + \delta_U\, u$, where
$\delta_\Omega \colon C^0_\Omega \to C^1$ is the restriction of the
coboundary to the free coordinate subspace.

\begin{lemma}[Unitriangular factorization]\label{lemma:det1}
  For any activation pattern and identity output activation
  $\phi = \id$, the restricted coboundary $\delta_\Omega$ is a square
  matrix with $\det \delta_\Omega = 1$.  Consequently the restricted
  Laplacian satisfies
  \[
    L_{\mathcal{F}_R}[\Omega,\Omega]
      = \delta_\Omega^T\, \delta_\Omega,
    \qquad
    \det L_{\mathcal{F}_R}[\Omega,\Omega] = 1,
  \]
  and in particular is positive definite.
\end{lemma}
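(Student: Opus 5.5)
The plan is to write $\delta_\Omega$ explicitly in block form, choosing an ordering of free coordinates and edges that makes it block lower-triangular with identity diagonal blocks. First I count dimensions to confirm squareness. The free coordinates are $z^{(\ell)}\in\R^{n_\ell}$ for $1\le \ell\le k+1$, the first blocks $a^{(\ell)}\in\R^{n_\ell}$ of the stalks at $v_{a^{(\ell)}}$ for $1\le\ell\le k$, and $\hat y\in\R^{n_{k+1}}$. The edges are the weight edges $e_{z^{(\ell)}}$ (stalk $\R^{n_\ell}$, $1\le\ell\le k+1$), the activation edges $e_{a^{(\ell)}}$ (stalk $\R^{n_\ell}$, $1\le \ell\le k$), and the output edge $e_y$ (stalk $\R^{n_{k+1}}$). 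This gives a natural bijection between free vertex blocks and edges: each edge is matched to its \emph{downstream} endpoint. Explicitly, $e_{z^{(\ell)}}\leftrightarrow z^{(\ell)}$, $e_{a^{(\ell)}}\leftrightarrow a^{(\ell)}$, and $e_y\leftrightarrow \hat y$, and matched blocks have equal dimension. So $\delta_\Omega$ is square.

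Next I order both the free blocks and the edges in forward-pass order: $z^{(1)},a^{(1)},z^{(2)},\dots,a^{(k)},z^{(k+1)},\hat y$, with the edges ordered correspondingly. I then compute each block row of $\delta_\Omega$ from \eqref{eq:coboundary}, discarding the columns of fixed coordinates, which go into $\delta_U$.
\begin{itemize}
\item The row of $e_{z^{(1)}}$ has only the entry $I_{n_1}$ in column $z^{(1)}$, since $v_x$ is entirely fixed.
\item For $\ell\ge2$, the row of $e_{z^{(\ell)}}$ has $I_{n_\ell}$ at $z^{(\ell)}$ and $-W^{(\ell)}$ at $a^{(\ell-1)}$. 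The bias block $B^{(\ell)}$ acts on the fixed ones block and so moves to $\delta_U$.
\item The row of $e_{a^{(\ell)}}$ has $P_{n_\ell}$ restricted to the free block, which is $I_{n_\ell}$, at $a^{(\ell)}$, and $-R^{(\ell)}$ at $z^{(\ell)}$.
\item The row of $e_y$ has $I_{n_{k+1}}$ at $\hat y$ and $-I$ at $z^{(k+1)}$, using $\phi=\id$.
\end{itemize}
Every off-diagonal block lies strictly below the diagonal, because each edge's upstream endpoint precedes its downstream one in the ordering. Hence $\delta_\Omega$ is block lower-unitriangular.

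Given this form, the determinant of a block triangular matrix is the product of the determinants of its diagonal blocks, so $\det\delta_\Omega=1$. This holds for any ordering convention, since simultaneously permuting rows and columns by the matching does not change the determinant; even an independent sign choice would not affect the conclusion about $L$.

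For the Laplacian, I note that the quadratic form on free coordinates is $\|\delta_\Omega\omega+\delta_U u\|^2$. Its Hessian block, $L_{\mathcal F_R}[\Omega,\Omega]$, equals $\delta_\Omega^T\delta_\Omega$, because the $[\Omega,\Omega]$ block of $\delta^T\delta$ only involves the columns of $\delta$ indexed by $\Omega$. Therefore $\det L[\Omega,\Omega]=(\det\delta_\Omega)^2=1$. Since $\delta_\Omega$ is invertible, $\omega^T L[\Omega,\Omega]\omega=\|\delta_\Omega\omega\|^2>0$ for every $\omega\neq0$, so $L[\Omega,\Omega]$ is positive definite.

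The main obstacle is bookkeeping at the coordinate level, because some stalks are only partially free. I must check that restricting $P_{n_\ell}$ to the free block of $v_{a^{(\ell)}}$ gives exactly the identity, and that no free coordinate appears in an edge row lying above its matched edge. In particular, the fixed ones blocks must not leave any free column without a diagonal identity. I will verify this carefully at the boundary cases $\ell=1$ and the output edge.
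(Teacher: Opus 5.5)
Your proposal is correct and follows essentially the same argument as the paper: you order the free blocks and the edges along the path so that each edge is paired with its downstream endpoint. This makes $\delta_\Omega$ block lower-unitriangular with $-R^{(\ell)}$, $-W^{(\ell)}$, $-I$ on the subdiagonal, so $\det L_{\mathcal{F}_R}[\Omega,\Omega]=(\det\delta_\Omega)^2=1$ and positive definiteness follows from invertibility. Your explicit justification that the $[\Omega,\Omega]$ block of $\delta^T\delta$ equals $\delta_\Omega^T\delta_\Omega$, and your remark that the sign of $\det\delta_\Omega$ depends on a consistent ordering convention, are sound refinements that the paper leaves implicit.
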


\begin{proof}
  Order the free coordinates along the path as
  \[
    \omega
      = \bigl(
          z^{(1)},\; a^{(1)},\;
          z^{(2)},\; a^{(2)},\;
          \ldots,\;
          a^{(k)},\; z^{(k+1)},\; \hat{y}
        \bigr),
  \]
  and the edge stalks in the same path order:
  $e_{z^{(1)}},\, e_{a^{(1)}},\, e_{z^{(2)}},\, \ldots,\,
  e_{z^{(k+1)}},\, e_y$.  Each free-coordinate block has the same
  dimension as the edge stalk it is paired with -- $\R^{n_\ell}$ for
  both $z^{(\ell)}$ and $e_{z^{(\ell)}}$, and likewise for
  activation edges and the output edge -- so $\delta_\Omega$ is a square
  matrix of size $\bigl(2\!\sum_{\ell=1}^{k} n_\ell + 2\,n_{k+1}\bigr)$.

  Each edge contributes one block row to $\delta_\Omega$:  the
  downstream endpoint enters with an identity block on the diagonal,
  and the upstream endpoint enters with
  $-R^{(\ell)}$, $-W^{(\ell)}$, or $-I$ on the first block
  sub-diagonal.  Concretely, for a single hidden layer
  ($k = 1$) with free coordinates
  $\omega = (z^{(1)},\, a^{(1)},\,
  z^{(2)},\, \hat{y})$:
  \begin{equation}\label{eq:delta-omega-k1}
    \delta_\Omega
      = \begin{pmatrix}
          I_{n_1} &         &         &         \\
          -R^{(1)} & I_{n_1} &         &         \\
                  & -W^{(2)} & I_{n_2} &         \\
                  &         & -I_{n_2} & I_{n_2}
        \end{pmatrix}.
  \end{equation}
  For general $k$, the pattern extends along the path: sub-diagonal
  blocks alternate between $-R^{(\ell)}$ (activation edges) and
  $-W^{(\ell+1)}$ (weight edges), terminating with $-I_{n_{k+1}}$
  (output edge).  In every case $\delta_\Omega$ is block
  lower-triangular with identity blocks on the diagonal, so
  $\det\delta_\Omega = 1$.  Therefore
  \[
    \det L_{\mathcal{F}_R}[\Omega,\Omega]
      = \bigl(\det\delta_\Omega\bigr)^2
      = 1.
  \]
  Invertibility of $\delta_\Omega$ implies that
  $\delta_\Omega^T\delta_\Omega$ is positive definite.
\end{proof}

\begin{remark}\label{rem:reduced-system}
  For identity output, the output edge forces
  $\hat{y} = z^{(k+1)}$ at equilibrium.
  Eliminating $\hat{y}$ by Schur complement gives the
  \emph{reduced} restricted Laplacian on the free coordinates
  $(z^{(1)}, a^{(1)}, \ldots,
  a^{(k)}, z^{(k+1)})$.  Since the
  $\hat{y}$-diagonal block is $I_{n_{k+1}}$, the Schur
  complement preserves the unit determinant.  The reduced system
  has the block-tridiagonal form derived in
  \cref{ap:block-structure} and is used for the dynamics throughout
  \cref{sec:convergence,sec:extensions}.
\end{remark}

The unit determinant forces $L_{\mathcal{F}_R}[\Omega,\Omega]$ to be
positive definite for every activation pattern.  In the language of
\cref{subsec:sheaves}, for a fixed $t$ the relative cohomology
$H^0(G, U; \mathcal{F}_R) = 0$ vanishes, so the harmonic extension of
the boundary data exists and is unique.  Since there are finitely many
activation patterns, the smallest eigenvalue across all patterns
provides a uniform lower bound
$\lambda^* = \alpha\,\min_{R}\lambda_{\min}(L_{\mathcal{F}_R}[\Omega,\Omega])
> 0$ on the convergence rate.

Since the restricted Laplacian is positive definite for every activation
pattern, each pattern yields a unique harmonic extension of the boundary
data.  The following result identifies all of these harmonic extensions
at once and shows that forward substitution in the unitriangular system
is exactly the forward pass.

\begin{proposition}[Forward pass as harmonic extension]
\label{prop:forward-pass}
  For any frozen activation pattern $R$, the unique harmonic extension
  of the boundary data satisfies
  $z^{(\ell)*} = \overline{W}^{(\ell)}\overline{a}^{(\ell-1)*}$
  and
  $a^{(\ell)*} = R^{(\ell)}\,z^{(\ell)*}$
  for all layers, with $\hat{y}^* = z^{(k+1)*}$.
  In particular, when $\overline{\mathbf{a}}^{(0)}$ is fixed to
  $\overline{\mathbf{x}}$, only the forward-pass activation
  pattern~\eqref{eq:forward-pass} produces a harmonic extension
  that is also a global section.
\end{proposition}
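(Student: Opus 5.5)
The plan is to use \cref{lemma:det1} and show that for a frozen pattern $R$ the harmonic extension is a global section of $\mathcal{F}_R$ relative to the boundary data. Harmonicity on the free coordinates means $\delta_\Omega^T(\delta_\Omega\omega + \delta_U u) = 0$, since $L_{\mathcal{F}_R}[\Omega,\Omega]\omega + L_{\mathcal{F}_R}[\Omega,U]u = \delta_\Omega^T\delta_\Omega\omega + \delta_\Omega^T\delta_U u$. By \cref{lemma:det1}, $\delta_\Omega$ is square and invertible, so $\delta_\Omega^T$ is injective. Hence the harmonic extension satisfies $\delta_\Omega\omega^* = -\delta_U u$, that is, $\delta\overline{\omega}^* = 0$: every edge discrepancy vanishes. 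Conversely, any $\omega$ with $\delta\overline{\omega}=0$ is harmonic. So the harmonic extension is exactly the unique solution of the square unitriangular system $\delta_\Omega\omega = -\delta_U u$.

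Next I would solve this system by forward substitution, block row by block row in the path order of the lemma's proof. The first row (edge $e_{z^{(1)}}$) gives $z^{(1)*} = \overline{W}^{(1)}\overline{\mathbf{a}}^{(0)}$, because the upstream vertex $v_x$ is entirely boundary and contributes only through $\delta_U u$. Each activation row gives $P_{n_\ell}\overline{a}^{(\ell)*} - R^{(\ell)}z^{(\ell)*} = 0$, i.e.\ $a^{(\ell)*} = R^{(\ell)}z^{(\ell)*}$. Each weight row gives $z^{(\ell+1)*} = W^{(\ell+1)}a^{(\ell)*} + B^{(\ell+1)}\mathbf{1}_{n_{\ell+1}} = \overline{W}^{(\ell+1)}\overline{a}^{(\ell)*}$. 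Here the ones block is boundary data and lands in $\delta_U u$. The last row gives $\hat y^* = z^{(k+1)*}$. An induction on the row index, using that the diagonal blocks are identities, yields all the stated identities.

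For the final claim, the harmonic extension for pattern $R$ is a global section of the \emph{state-dependent} sheaf exactly when $R$ coincides with $R^{z^{(\ell)*}}$, the pattern read off from its own pre-activations. The consistency identities already hold for the sheaf $\mathcal{F}_R$, so what matters is whether $R^{(\ell)}$ equals the sign pattern of $z^{(\ell)*}$. I would argue by induction on $\ell$.
\begin{itemize}
\item At $\ell=1$, $z^{(1)*} = \overline{W}^{(1)}\overline{\mathbf{x}}$ is independent of $R$, so self-consistency forces $R^{(1)} = R^{\mathbf{z}^{(1)}}$ from \eqref{eq:forward-pass}.
\item Then $a^{(1)*} = \relu(\mathbf{z}^{(1)}) = \mathbf{a}^{(1)}$, so $z^{(2)*}$ is the true $\mathbf{z}^{(2)}$, forcing $R^{(2)}$, and so on.
\end{itemize}
Conversely, the forward-pass pattern does produce such a section, since then $R^{\mathbf{z}^{(\ell)}}\mathbf{z}^{(\ell)} = \relu(\mathbf{z}^{(\ell)})$.

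The main obstacle is being precise about "global section" when the restriction maps depend on the state. The statement implicitly means consistency with the ReLU matrix evaluated at the cochain's own $z$. The tie convention $z_j=0 \mapsto 1$ in \eqref{eq:relu-matrix} must be respected, so that uniqueness of the self-consistent pattern is exact rather than only generic. On ties at zero, choosing the other pattern entry changes nothing in $a^{(\ell)*}$, but it is a different $R$. The convention in \eqref{eq:relu-matrix} fixes $R$ uniquely, which I would note explicitly.
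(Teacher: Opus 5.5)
Your proposal is correct and follows the paper's proof essentially step for step. Injectivity of $\delta_\Omega^T$ (from \cref{lemma:det1}) turns harmonicity into $\delta\overline{\omega}^* = 0$, forward substitution in the unitriangular system reproduces the layer equations, and self-consistency is then forced layer by layer from the pinned input. The one thing to tighten is at ties $z^{(\ell)*}_j = 0$: your claim that the extension is a global section ``exactly when $R^{(\ell)} = R^{z^{(\ell)*}}$'' holds only in one direction there, because either label gives the same cochain and that cochain is a global section, so the convention makes the \emph{label} unique, but it is the paper's genericity proviso (\cref{rem:zero-preactivation}) that makes ``only the forward-pass pattern'' literally true.
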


\begin{proof}
  The harmonic extension minimizes the total discrepancy
  $\frac{1}{2}\|\delta\, \overline{\omega}\|^2$ subject to the boundary data, so it
  satisfies $\delta_\Omega^T(\delta_\Omega\,\omega^* +
  \delta_U\,u) = 0$.  Since $\delta_\Omega$ is invertible
  (\cref{lemma:det1}), this simplifies to
  \[
    \delta_\Omega\,\omega^* = -\delta_U\,u,
  \]
  which is the zero-discrepancy condition: the total coboundary
  $\delta\,\overline{\omega}^* = 0$ vanishes on every edge.

  Because $\delta_\Omega$ is unitriangular, this system is solved by
  forward substitution along the path.  The first block row (weight
  edge~$1$) gives
  $z^{(1)*} = \overline{W}^{(1)}\overline{\mathbf{x}}$.  The
  second row (activation edge~$1$) gives
  $a^{(1)*} = R^{(1)}z^{(1)*}$.  Continuing
  alternately through weight and activation edges:
  $z^{(\ell)*} =
  \overline{W}^{(\ell)}\overline{a}^{(\ell-1)*}$ and
  $a^{(\ell)*} = R^{(\ell)}z^{(\ell)*}$ for
  $1 \leq \ell \leq k$, and finally
  $z^{(k+1)*} =
  \overline{W}^{(k+1)}\overline{a}^{(k)*}$ and
  $\hat{y}^* = z^{(k+1)*}$.  These are exactly the
  forward-pass equations~\eqref{eq:forward-pass} with activation
  pattern $R$.

  Since $\delta\,\overline{\omega}^* = 0$, the extension is a global section of the
  frozen sheaf $\mathcal{F}_R$.  It is a global section of the
  \emph{actual} state-dependent sheaf $\mathcal{F}_t$ only when $R$
  is self-consistent: the sign pattern of the computed
  $z^{(\ell)*}$ must agree with $R^{(\ell)}$.  Fixing
  $\overline{\mathbf{a}}^{(0)} = \overline{\mathbf{x}}$ determines
  $z^{(1)*}$, hence $R^{(1)}$, hence $a^{(1)*}$,
  and so on layer by layer: the self-consistent pattern is unique,
  provided no pre-activation coordinate vanishes (a generic condition
  in the weights; see \cref{rem:zero-preactivation}).
\end{proof}

\begin{remark}[Zero pre-activations]\label{rem:zero-preactivation}
  When $\mathbf{z}^{(\ell)}_j = 0$ at the forward-pass solution,
  both $R^{(\ell)}_{jj} = 0$ and $R^{(\ell)}_{jj} = 1$ produce
  $\mathbf{a}^{(\ell)}_j = 0$, so the harmonic extension is the same
  either way.  Self-consistency is unambiguous; the non-uniqueness is
  only in the labeling of the activation pattern, not in the solution
  itself.  This is a codimension-one condition in weight space and is
  absent for generic weights, but it can arise for trained networks
  with dead neurons.  Our convention $R^{(\ell)}_{jj} = 1$ for
  $\mathbf{z}^{(\ell)}_j \geq 0$ resolves the labeling; the forward pass and all
  convergence results are unaffected.
\end{remark}

\begin{figure}[ht]
\centering

\begin{subfigure}[b]{0.48\textwidth}
\centering
\begin{tikzpicture}[
    neuron/.style={circle, draw=black, minimum size=0.6cm, inner sep=0pt, font=\scriptsize},
    arrow/.style={->, >=Stealth, thick},
    weight/.style={font=\scriptsize, inner sep=1pt},
]

\node[neuron, draw=DarkRed] (x1) at (0, 1.5) {$\mathbf{x}_1$};
\node[neuron, draw=DarkRed] (x2) at (0, 0) {$\mathbf{x}_2$};

\node[neuron, draw=DarkGreen] (a1) at (3, 2.25) {$\mathbf{a}_1^{\scriptscriptstyle(1)}$};
\node[neuron, draw=DarkGreen] (a2) at (3, 0.75) {$\mathbf{a}_2^{\scriptscriptstyle(1)}$};
\node[neuron, draw=DarkGreen] (a3) at (3, -0.75) {$\mathbf{a}_3^{\scriptscriptstyle(1)}$};

\node[neuron, draw=DarkGreen] (y1) at (6, 0.75) {$\hat{\mathbf{y}}$};

\draw[arrow] (x1) -- (a1);
\draw[arrow] (x1) -- (a2);
\draw[arrow] (x1) -- (a3);
\draw[arrow] (x2) -- (a1);
\draw[arrow] (x2) -- (a2);
\draw[arrow] (x2) -- (a3);

\draw[arrow] (a1) -- (y1);
\draw[arrow] (a2) -- (y1);
\draw[arrow] (a3) -- (y1);

\node[weight] at ($(x1)!0.5!(a1) + (0, 0.3)$) {$W^{\scriptscriptstyle(1)}_{11}$};
\node[weight] at ($(x2)!0.5!(a3) + (0, -0.3)$) {$W^{\scriptscriptstyle(1)}_{32}$};

\node[weight] at ($(a1)!0.5!(y1) + (0, 0.3)$) {$W^{\scriptscriptstyle(2)}_{11}$};
\node[weight] at ($(a3)!0.5!(y1) + (0, -0.3)$) {$W^{\scriptscriptstyle(2)}_{13}$};

\node[align=center, font=\scriptsize ] at (0, -1.8) {$\mathbf{x}$};
\node[align=center, font=\scriptsize ] at (2.7, -1.8) {$\mathbf{a}^{\scriptscriptstyle(1)} = \relu(W^{\scriptscriptstyle(1)}\mathbf{x} + b^{\scriptscriptstyle(1)})$};
\node[align=center, font=\scriptsize ] at (6.3, -1.8) {$\hat{\mathbf{y}} = \phi(W^{\scriptscriptstyle(2)}\mathbf{a}^{\scriptscriptstyle(1)} + b^{\scriptscriptstyle(2)})$};

\node[above=0.3cm of x1] {\footnotesize Input};
\node[above=0.3cm of a1] {\footnotesize Hidden};
\node[above=0.3cm of y1] {\footnotesize Output};

\end{tikzpicture}
\caption{Feedforward neural network}
\label{fig:nn-panel}
\end{subfigure}
\hfill
\begin{subfigure}[b]{0.48\textwidth}
\centering

\begin{tikzpicture}[
    vertex/.style={circle, fill=white, draw=black, thick, minimum size=5pt, inner sep=0pt},
    edge stalk h/.style={rectangle, rounded corners=1pt, fill=white, draw=black, minimum width=16pt, minimum height=4pt, inner sep=0pt},
    edge stalk v/.style={rectangle, rounded corners=1pt, fill=white, draw=black, minimum width=4pt, minimum height=16pt, inner sep=0pt},
    arrow/.style={-{Stealth[length=4pt]}, thick},
    every node/.style={font=\small},
]

\node[vertex, draw=DarkRed] (vx) at (0, 2) {};

\node[vertex, draw=DarkGreen] (vz1) at (0, 0) {};

\node[vertex, draw=DarkOrange1] (va1) at (2.5, 0) {};

\node[vertex, draw=DarkGreen] (vz2) at (5, 0) {};

\node[vertex, draw=DarkGreen] (vy) at (5, 2) {};

\node[above left] at (vx) {$\overline{\mathbf{x}}$};
\node[below left] at (vz1) {$z^{\scriptscriptstyle(1)}$};
\node[below] at (va1) {$\overline{a}^{\scriptscriptstyle(1)}$};
\node[below right] at (vz2) {$z^{\scriptscriptstyle(2)}$};
\node[above right] at (vy) {$\hat{y}$};
\node[above] at (2.5, 3) {$\dfrac{d\omega}{dt} = -\alpha(L_{\mathcal{F}_t}[\Omega,\Omega]\omega + L_{\mathcal{F}_t}[\Omega,U]u)$};

\draw[thick] (vx) -- (vz1);

\draw[thick] (vz1) -- (va1);

\draw[thick] (va1) -- (vz2);

\draw[thick] (vz2) -- (vy);

\node[edge stalk v] (e1) at (0, 1) {};
\node[edge stalk h] (e2) at (1.25, 0) {};
\node[edge stalk h] (e3) at (3.75, 0) {};
\node[edge stalk v] (e4) at (5, 1) {};


\draw[arrow] (-0.3, 1.85) -- (-0.3, 1.15);
\draw[arrow] (-0.3, 0.15) -- (-0.3, 0.85);
\node[left, font=\scriptsize] at (-0.35, 1.5) {$\overline{W}^{\scriptscriptstyle(1)}$};
\node[left, font=\scriptsize] at (-0.35, 0.5) {$I_3$};

\draw[arrow] (0.15, -0.3) -- (1.1, -0.3);
\draw[arrow] (2.2, -0.3) -- (1.4, -0.3);
\node[below, font=\scriptsize] at (0.55, -0.35) {$R^{z^{(1)}}$};
\node[below, font=\scriptsize] at (1.85, -0.35) {$P_3$};

\draw[arrow] (2.8, -0.3) -- (3.6, -0.3);
\draw[arrow] (4.85, -0.3) -- (3.9, -0.3);
\node[below, font=\scriptsize] at (3.15, -0.35) {$\overline{W}^{\scriptscriptstyle(2)}$};
\node[below, font=\scriptsize] at (4.45, -0.35) {$I_2$};

\draw[arrow] (5.3, 1.85) -- (5.3, 1.15);
\draw[arrow] (5.3, 0.15) -- (5.3, 0.85);
\node[right, font=\scriptsize] at (5.35, 1.5) {$I_1$};
\node[right, font=\scriptsize] at (5.35, 0.5) {$\phi$};

\end{tikzpicture}
\caption{Cellular sheaf structure}
\label{fig:sheaf-panel}
\end{subfigure}

\caption{The neural network--sheaf correspondence. Red nodes represent fixed data (boundary conditions); green and yellow nodes are computed, with yellow nodes having a fixed component. \textbf{(a)} The feedforward network computes postactivations $\mathbf{a}^{(1)} = \relu(W^{(1)}\mathbf{x} + b^{(1)})$ and output $\hat{\mathbf{y}} = \phi(W^{(2)}\mathbf{a}^{(1)} + b^{(2)})$ via the forward pass. \textbf{(b)} In the sheaf formulation, only $\overline{\mathbf{x}}$ (input extended with ones) is pinned. Restriction maps encode weights and biases ($\overline{W}^{(1)}, \overline{W}^{(2)}$), ReLU activations ($R^{z^{(1)}}$), and final activation ($\phi$). The heat equation drives any initialization to the correct forward pass output.}
\label{fig:nn-sheaf-correspondence}
\end{figure}
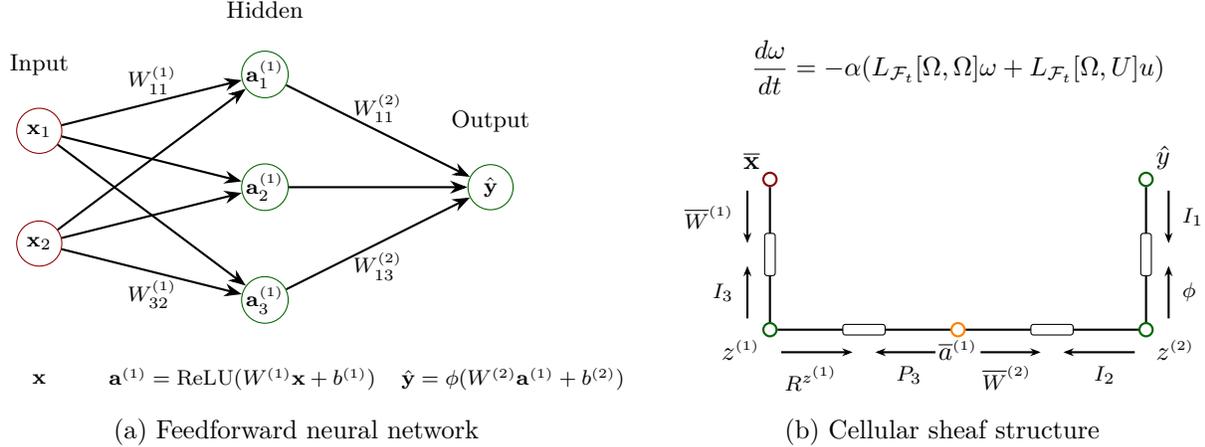

Thus, a standard feedforward ReLU network can be embedded in the space of global sections of a cellular sheaf in this way. The sheaf diffusion
dynamics~\eqref{eq:neural-restricted} provide an alternative approach to compute this
output. Although more convoluted, we argue that this formulation might offer benefits in terms of interpretability and privacy, stemming from the distributed nature of the algorithm. In \cref{sec:convergence} we show that the
dynamics converge to $\omega^*$ exponentially fast despite this switching.


\section{Convergence Analysis}\label{sec:convergence}

\Cref{prop:forward-pass} identifies the forward pass output as the unique
harmonic extension of the boundary data for each fixed activation pattern, but
the activation pattern itself evolves with the dynamics. This section
establishes that the dynamics nonetheless converge to the correct output. We
treat identity output activation first
(\cref{subsec:relu-convergence}), then extend to nonlinear final activations
in \cref{subsec:final-activation}.

\subsection{Convergence for ReLU Networks}
\label{subsec:relu-convergence}

The restricted dynamics~\eqref{eq:neural-restricted} form a continuous
piecewise affine (CPWA) system~\cite{gould2025thesis}: the state space is
partitioned into finitely many convex polyhedral regions
$\{\mathcal{R}_i\}_{i=1}^N$, one per ReLU activation
pattern~\cite{liu2023relu}, and within each region the vector field is affine.
Write $\mathcal{F}_i$ for the sheaf whose restriction maps correspond to the
activation pattern of region~$\mathcal{R}_i$. At region boundaries where some
$z^{(\ell)}_j = 0$, the restriction maps $R^{z^{(\ell)}}$ switch
discontinuously. Under our convention that $R^{z^{(\ell)}}_{jj} = 1$ for
$z^{(\ell)}_j \geq 0$, the dynamics select the velocity of maximal norm among Filippov
selections at switching surfaces; this is Gould's \emph{fast selection
rule}~\cite[Def.~7.4.17]{gould2025thesis}, which guarantees existence and
uniqueness of solutions and prevents spurious
sliding~\cite[Prop.~7.4.14, Rem.~7.4.20]{gould2025thesis}.

\begin{theorem}[Convergence to neural network output]
\label{thm:convergence}
  Consider a $k$-hidden-layer ReLU network with identity output activation
  $\phi = \mathrm{id}$, and assume the forward-pass equilibrium $\omega^*$
  lies in the interior of an activation region (a generic condition in the
  weights; see \cref{rem:zero-preactivation}). For any initial condition
  $\omega_0$, the solution of the restricted
  dynamics~\eqref{eq:neural-restricted} converges exponentially to the unique
  cochain $\omega^*$ encoding the standard forward pass output.
\end{theorem}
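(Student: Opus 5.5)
We prove \cref{thm:convergence} by combining a Lyapunov argument with the feedforward (triangular) structure from \cref{lemma:det1}.

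\textbf{Step 1 (setup).} Set $\Phi(\omega) = \tfrac12\|\delta_{R(\omega)}\omega + \delta_U u\|^2$, where $R(\omega)$ is the activation pattern read off from the current $z$-coordinates. $\Phi$ is continuous, because the ReLU edge term $R^{z}z - a = \relu(z) - a$ is continuous in $z$. It is piecewise quadratic on the polyhedral regions $\mathcal{R}_i$. Inside a region $\mathcal{R}_i$, the vector field~\eqref{eq:neural-restricted} is exactly $-\alpha\nabla\Phi_i$, with $\Phi_i$ the frozen-pattern quadratic. Hence $\dot\Phi = -\alpha\|\nabla\Phi_i\|^2 \le 0$ there.

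\textbf{Step 2 (switching surfaces).} On a surface where some $z^{(\ell)}_j = 0$, use the fast selection rule of Gould. It gives existence and uniqueness of Filippov solutions and excludes spurious sliding. We then check that $\Phi$ is nonincreasing along Filippov solutions. Since $\Phi$ is a continuous piecewise quadratic, it is regular in Clarke's sense wherever it is a max-type function. Where it is not, we verify directly that both one-sided quadratics agree to first order on the surface, because $\relu$ is continuous. The chain rule for Filippov solutions (as in Gould's generalized-gradient theory) then gives $\tfrac{d}{dt}\Phi(\omega(t)) \le 0$ for almost every $t$. It also gives that $\Phi$ is strictly decreasing unless $0$ lies in the Clarke generalized gradient $\partial\Phi(\omega)$.

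\textbf{Step 3 (main obstacle: excluding spurious critical points).} We must show that the only point with $0\in\partial\Phi$ is $\omega^*$. In the interior of a region, $\nabla\Phi_i = \delta_{\Omega,i}^T(\delta_{\Omega,i}\omega + \delta_U u)$. By \cref{lemma:det1}, $\delta_{\Omega,i}$ is invertible, so a critical point has zero discrepancy. Forward substitution as in \cref{prop:forward-pass} then forces it to be the harmonic extension for pattern $i$. It is a genuine equilibrium only if pattern $i$ is self-consistent, and the only self-consistent pattern is the forward-pass one.

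On a switching surface, $0\in\partial\Phi$ means that some convex combination of the adjacent gradients vanishes. Here we use the cascade structure. Order the edges along the path and look at the first edge with nonzero discrepancy. Its downstream variable enters only that edge's row with an identity block, together with the row of the next edge downstream. Propagating from the output end backwards, the triangular structure forces every edge discrepancy to vanish in every convex combination. The mixing of patterns only affects the coefficient $\theta R + (1-\theta)R'$ on a coordinate with $z_j = 0$, and there $R_{jj}z_j = 0$ regardless of $\theta$. So the point is a zero-discrepancy state with some $z_j = 0$. That contradicts the genericity assumption that $\omega^*$ lies in a region interior. I expect this bookkeeping to be the delicate part: the backward induction through the convex combinations of the gradients $\delta^T\delta$.

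\textbf{Step 4 (conclusion).} $\Phi$ is radially unbounded: on each region $\Phi_i \ge \tfrac12\lambda^*\|\omega-\omega_i^*\|^2/\alpha$ up to constants, and there are finitely many regions. So trajectories are bounded. LaSalle's invariance principle for Filippov systems then gives convergence to $\{0\in\partial\Phi\}=\{\omega^*\}$. Since $\omega^*$ is interior to its region, the trajectory enters a neighborhood of $\omega^*$ contained in that region in finite time. After that the dynamics are the linear system with restricted Laplacian $L_{\mathcal{F}_R}[\Omega,\Omega]$, which is positive definite by \cref{lemma:det1}. This yields exponential convergence at rate at least $\lambda^*$, and combining it with the finite entry time gives the global exponential bound.
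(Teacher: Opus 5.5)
Your proof is correct, but it is not the route the paper takes for this theorem. It is essentially the paper's appendix proof of \cref{thm:final} specialized to $\phi=\mathrm{id}$, followed by the final-region linearization for the rate.

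The main-text proof delegates the global steps to Gould's CPWA theory: boundedness of fast solutions, convergence of bounded solutions to generalized critical points, and the claim that boundary critical points are not local minima and are exited under the fast selection rule. You instead prove these steps directly:
\begin{itemize}
\item boundedness from coercivity of $\Phi$, using $\lambda_{\min}(L_{\mathcal{F}_i}[\Omega,\Omega])\ge\lambda^*/\alpha$ from \cref{lemma:det1};
\item monotonicity through sliding from the tangential-gradient computation;
\item convergence from LaSalle;
\item exclusion of Clarke critical points on switching surfaces outright.
\end{itemize}

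Your version of that last step is tighter than the paper's layer-by-layer cascade. At a surface point, every adjacent pattern $p$ gives the same residual $r=\delta_{\Omega,p}\,\omega+\delta_U u$, because $R_{jj}z_j=0$ when $z_j=0$. Hence $\partial_C\Phi(\omega)=\{\bar\delta^{T}r:\bar\delta\in\mathrm{co}\{\delta_{\Omega,p}\}\}$. Every such $\bar\delta$ is still block lower-unitriangular, hence invertible, which forces $r=0$. Then $\omega=\omega^*$, which contradicts $z_j=0$ under the genericity hypothesis.

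What each route buys:
\begin{itemize}
\item Your argument shows that the ``boundary critical points where $V>0$'' invoked in the main-text proof do not exist, and your conclusion holds whichever Filippov solution is selected.
\item The paper's citation-based route is shorter and inherits existence and uniqueness of solutions from Gould.
\end{itemize}

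Two small repairs. First, run the induction from the output edge. The most downstream edge with nonzero residual yields the contradiction; starting from the most upstream one, as your ``first edge'' wording suggests, does not close. Second, drop the Clarke-regularity remark: $\Phi$ has concave kinks where $a^{(\ell)}_j>0$ at $z^{(\ell)}_j=0$, so it is not regular there. The tangential computation alone gives $\tfrac{d}{dt}\Phi(\omega(t))=-\alpha^{-1}\|\dot\omega(t)\|^2$ for a.e.\ $t$ along any Filippov solution, which is exactly what LaSalle needs.
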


\begin{proof}
  Within each region~$\mathcal{R}_i$, the dynamics are gradient descent on the
  potential
  \begin{equation}\label{eq:lyapunov}
    V(\omega)
      = \tfrac{1}{2}\,\omega^T L_{\mathcal{F}_i}[\Omega,\Omega]\,\omega
        + \omega^T L_{\mathcal{F}_i}[\Omega,U]\,u,
  \end{equation}
  which equals the total discord
  $\frac{1}{2}\|\delta_i\overline{\omega}\|^2$ up to an additive constant
  depending only on the boundary data~$u$, where $\delta_i$ is the coboundary
  of $\mathcal{F}_i$ and $\overline{\omega} = u \oplus \omega$ is the full
  $0$-cochain. By \cref{lemma:det1}, $L_{\mathcal{F}_i}[\Omega,\Omega]$ is
  positive definite in every region, so $V$ has a unique minimizer in each
  region.

  The forward pass cochain $\omega^*$ from \cref{prop:forward-pass} is the
  unique \emph{self-consistent} critical point: the activation pattern it
  induces is the same one used to compute it. Any interior critical point of a
  region $\mathcal{R}_i$ must satisfy the forward pass equations with the ReLU
  pattern of~$\mathcal{R}_i$, and for a feedforward network the pinned input
  determines the activation pattern layer by layer, so only $\omega^*$
  qualifies. Boundary critical points, where $V > 0$ and the discord does not
  vanish, are not local minima of $V$; the fast selection rule ensures
  trajectories exit such points rather than remaining
  there~\cite[Lem.~7.4.23, Rem.~7.4.20]{gould2025thesis}.

  All fast solutions are
  bounded~\cite[Thm.~7.4.21]{gould2025thesis}, and bounded solutions of CPWA
  gradient systems converge to generalized critical
  points~\cite[Lem.~7.4.16]{gould2025thesis}. Since $\omega^*$ is the only
  such point, all trajectories converge to $\omega^*$.

  For the rate, the hypothesis guarantees that $\omega^*$ lies in the
  interior of some region~$\mathcal{R}_*$.  There exists $T$ such that the
  trajectory remains in~$\mathcal{R}_*$ for $t > T$, after which the
  dynamics reduce to the linear system
  $\frac{d}{dt}\omega = -\alpha\,L_{\mathcal{F}_*}[\Omega,\Omega](\omega -
  \omega^*)$, giving exponential convergence at rate
  $\alpha\,\lambda_{\min}(L_{\mathcal{F}_*}[\Omega,\Omega])$.  During the
  transient $t \leq T$, the Lyapunov function $V$ decreases at rate at
  least $\alpha\,\lambda_{\min}(L_{\mathcal{F}_i}[\Omega,\Omega])$ within
  each visited region~$\mathcal{R}_i$, so the overall convergence rate is
  bounded below by
  $\lambda^* = \alpha\,\min_i\lambda_{\min}(L_{\mathcal{F}_i}[\Omega,\Omega])
  > 0$, which is positive since there are finitely many regions and each
  restricted Laplacian is positive definite by \cref{lemma:det1}.
\end{proof}

Note that when the final activation is the identity, $z^{(k+1)} = \hat{y}$, so the output edge is redundant and in practice we can eliminate it. For all the other cases the output edge enforces additional nontrivial constraints, as we emphasize in the following section.

\subsection{Convergence with Final Activations}
\label{subsec:final-activation}

\Cref{thm:convergence} covers networks with identity output, sufficient for
regression. Classification requires nonlinear final activations such as
sigmoid, tanh, or softmax, and these are not piecewise linear: they cannot be
encoded as linear restriction maps in a cellular sheaf.
Gould~\cite{gould2025thesis} resolves this through the notion of a
\emph{local adjoint}, which extends the sheaf framework to nonlinear maps. To do this Gould considers that edge $e_{y}$ carries an edge
potential
\begin{equation}\label{eq:output-potential}
  U_{\mathrm{out}}(z^{(k+1)}, \hat{y})
    = \tfrac{1}{2}\|\phi(z^{(k+1)}) - \hat{y}\|^2
\end{equation}
Gradient descent on this potential drives
both vertices toward agreement through~$\phi$. When $\phi$ is a linear map this reduces to standard sheaf diffusion, but in the case of nonlinear $\phi$ the chain rule introduces the
Jacobian~$J_\phi(z^{(k+1)})$, which acts as a local
adjoint~\cite[Def.~7.2.9]{gould2025thesis}: the best linear approximation to
the adjoint of~$\phi$ at the current operating point. The resulting dynamics
on the output pair are
\begin{align}\label{eq:final-dynamics}
  \frac{d}{dt} z^{(k+1)}
    &= -\alpha\bigl[
        (z^{(k+1)} - \overline{W}^{(k+1)}\overline{a}^{(k)})
        + J_\phi(z^{(k+1)})^T\bigl(\phi(z^{(k+1)}) - \hat{y}\bigr)
      \bigr],\nonumber\\
  \frac{d}{dt} \hat{y}
    &= -\alpha\bigl(\hat{y} - \phi(z^{(k+1)})\bigr),
\end{align}
where the first term in the $z^{(k+1)}$ equation is the contribution from the
weight edge to the previous layer. The dynamics for all other coordinates
remain as in~\eqref{eq:neural-restricted}. At any fixed point,
$\hat{y} = \phi(z^{(k+1)})$ and the Jacobian term vanishes, so the hidden
layer equations reduce to the harmonic extension of
\cref{prop:forward-pass}. The full system therefore recovers the standard
forward pass including the final activation.

\begin{theorem}[Convergence with final activation]
\label{thm:final}
  Consider a $k$-hidden-layer ReLU network with $C^1$ final activation
  $\phi\colon \mathbb{R}^{n_{k+1}} \to \mathbb{R}^{n_{k+1}}$ that is either
  bounded (sigmoid, tanh, softmax) or has at most linear growth
  ($\|\phi(z)\| \leq C(1 + \|z\|)$ for some~$C > 0$).  Assume the
  forward-pass equilibrium $\omega^*$ lies in the interior of an activation
  region (a generic condition; see \cref{rem:zero-preactivation}).  For any
  initial condition $(\omega_0, \hat{y}_0)$, the solution of the restricted
  dynamics~\eqref{eq:neural-restricted} augmented
  by~\eqref{eq:final-dynamics} converges to the unique fixed point
  $(\omega^*, \phi(z^{(k+1)*}))$ encoding the standard neural network output.
\end{theorem}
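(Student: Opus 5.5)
The plan is to reuse the strategy of \cref{thm:convergence}, with the quadratic potential replaced by the nonconvex total discord
\[
  \Phi(\omega,\hat y) \;=\; V_{\mathrm{hid}}(\omega) \;+\; \tfrac12\bigl\|\phi(z^{(k+1)})-\hat y\bigr\|^2 ,
\]
where $V_{\mathrm{hid}}$ is the piecewise quadratic discord of all edges except $e_y$. The argument has four steps, taken in order.

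\textbf{Gradient structure and descent.} Within each activation region the augmented system \eqref{eq:neural-restricted}--\eqref{eq:final-dynamics} is exactly $-\alpha\nabla\Phi$. This holds because the Jacobian term $J_\phi^T(\phi(z^{(k+1)})-\hat y)$ is the chain-rule gradient of the output potential \eqref{eq:output-potential}. Since $\phi$ is $C^1$, the vector field is continuous in the $\hat y$ and $z^{(k+1)}$ directions. The only discontinuities are the ReLU switching surfaces already handled by Gould's fast selection rule. Consequently $\frac{d}{dt}\Phi=-\alpha\|\nabla\Phi\|^2\le 0$ along trajectories, including on Filippov sliding segments, by the same generalized-gradient argument cited in \cref{thm:convergence}.

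\textbf{Boundedness.} I would show that sublevel sets of $\Phi$ are bounded, starting from the input side. Since $u$ is fixed, bounded discord on $e_{z^{(1)}}$ bounds $z^{(1)}$. Since $R$ is a projection, bounded discord on the activation edge then bounds $a^{(1)}$. Continuing inductively along the path bounds $z^{(k+1)}$. Finally $\|\hat y\|\le\|\phi(z^{(k+1)})\|+\sqrt{2\Phi}$, and this is bounded under either growth hypothesis on $\phi$. This is the same forward-substitution mechanism as in \cref{lemma:det1}, now applied as a coercivity estimate. Boundedness of trajectories follows, and it replaces the citation to~\cite[Thm.~7.4.21]{gould2025thesis}.

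\textbf{Critical points.} I would characterize the generalized critical points. At an interior critical point, the $\hat y$-equation forces $\hat y=\phi(z^{(k+1)})$. This kills the Jacobian term, so the remaining equations are the stationarity conditions of the frozen linear sheaf of \cref{prop:forward-pass} with $z^{(k+1)}$ free. By invertibility of $\delta_\Omega$ these conditions force zero discrepancy, and the self-consistency argument then yields $\omega^*$. It is worth stressing that this step needs no invertibility of $J_\phi$, which matters for saturating sigmoids and for softmax, whose Jacobian is singular along $\mathbf 1$. Critical points on switching surfaces are excluded exactly as before, via~\cite[Lem.~7.4.23]{gould2025thesis}: there the discord is positive, and fast solutions leave.

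\textbf{Convergence.} Bounded solutions of this continuous-in-$\phi$ CPWA gradient system have $\omega$-limit sets contained in the generalized critical set, which is the single point $(\omega^*,\phi(z^{(k+1)*}))$. I expect the main obstacle to be justifying this invocation of~\cite[Lem.~7.4.16]{gould2025thesis} once $\phi$ is nonlinear. Gould's lemma is stated for piecewise affine fields. I would first try a direct LaSalle argument for the Filippov inclusion: $\Phi$ is a locally Lipschitz, regular Lyapunov function on a compact forward-invariant set, so the invariance principle for Filippov systems applies. Using the local-adjoint setup of~\cite[Def.~7.2.9]{gould2025thesis} would serve as a fallback. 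If a rate is wanted, it can be added locally: near the interior equilibrium, the linearization is $-\alpha\nabla^2\Phi$ with $\hat y=\phi(z)$. This Hessian is positive definite, since it is $\delta^T\delta$ for the sheaf with linear map $J_\phi$ on $e_y$, which is again unitriangular. This gives eventual exponential convergence, though the statement only asks for convergence.
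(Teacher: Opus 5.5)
Your overall architecture matches the paper's: energy equal to the hidden discord plus the output potential, gradient structure in region interiors, coercivity, interior critical points handled by $\hat y=\phi(z^{(k+1)})$ killing the Jacobian term, then LaSalle. However, the exclusion of critical points on switching surfaces is not established. You delegate it to \cite[Lem.~7.4.23]{gould2025thesis} ``exactly as before''. That is a CPWA result, so the objection you raise against Lem.~7.4.16 applies to it equally. The reason you give, ``there the discord is positive, and fast solutions leave'', is also not an argument. Positive discord does not by itself prevent a point from being Clarke critical (cf.\ \cref{note:partial-clamping}). And knowing that the solution started at such a point moves away does not stop other trajectories from converging to it asymptotically. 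Your Step~4 needs the Clarke-critical set to be exactly $\{(\omega^*,\phi(z^{(k+1)*}))\}$, and nothing in Step~3 proves that. In the paper, this is precisely where the hypothesis that $\omega^*$ is interior to an activation region does its work.

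The missing idea is the paper's output-to-input cascade on the surface.
\begin{enumerate}
\item On $\{z^{(\ell)}_j=0\}$ the one-sided gradients $\nabla E^{\pm}$ differ only in the $z^{(\ell)}_j$-entry, so $0\in\partial_C E$ forces all other entries to vanish.
\item Back-substituting from $\hat y$, exactly as you do for interior points, kills the discord on every edge downstream of layer $\ell$ and gives $a^{(\ell)}=R^{z^{(\ell)}}z^{(\ell)}$, hence $a^{(\ell)}_j=0$.
\item Then both one-sided $z^{(\ell)}_j$-derivatives equal $-t_j$, where $t_j=(\overline{W}^{(\ell)}\overline{a}^{(\ell-1)})_j$, so the Clarke condition forces $t_j=0$.
\item The cascade then runs down to the input and produces a global section with $z^{(\ell)}_j=0$. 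That section must be $\omega^*$, contradicting genericity.
\end{enumerate}
Hence there are no switching-surface critical points at all.

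A related slip: $\Phi$ is not Clarke regular. The term $\tfrac12(\mathrm{ReLU}(z_j)-a_j)^2$ has one-sided $z_j$-derivatives $0$ and $-a_j$ at $z_j=0$, a concave kink when $a_j>0$, so the regular-function LaSalle route is unavailable as stated. The paper instead proves descent during sliding directly. Because $\mathrm{ReLU}(0)=0$, the one-sided energies agree on the surface, so their tangential gradients agree. Any sliding velocity therefore equals $-\alpha\nabla_\tau E$, giving $\tfrac{d}{dt}E=-\alpha\|\nabla_\tau E\|^2$.

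On the positive side, your forward-substitution coercivity estimate is cleaner than the paper's and needs only continuity of $\phi$. The hidden-edge discords alone bound $z^{(k+1)}$, so the growth hypothesis on $\phi$ is superfluous.
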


\begin{proof}[Proof sketch]
  Append the output potential~\eqref{eq:output-potential} to the Lyapunov
  function~\eqref{eq:lyapunov} to obtain
  $E(\omega, \hat{y}) = V(\omega) +
  \frac{1}{2}\|\phi(z^{(k+1)}) - \hat{y}\|^2$.
  The positive-definite quadratic term $V(\omega)$ dominates at large
  $\|\omega\|$, and the growth hypothesis on~$\phi$ ensures the output
  potential does not destroy this dominance, so $E$ is coercive and
  trajectories are bounded. In each ReLU region interior,
  $\frac{d}{dt}E = -\alpha\,\|\nabla E\|^2 \leq 0$. Since
  $\operatorname{ReLU}(0) = 0$ regardless of convention, the one-sided
  energies agree on every switching surface, and $E$ decreases during
  Filippov sliding at rate $-\alpha\,\|\nabla_\tau E\|^2$.
  LaSalle's invariance principle~\cite{shevitz1994lyapunov} yields
  convergence to the set where $0 \in \partial_C E$. The feedforward
  cascade structure excludes Clarke critical points on switching surfaces
  for generic weights, so $\omega^*$ is the unique element of this set.
  The full proof appears in~\cref{ap:proofs}.
\end{proof}

The edge potential~\eqref{eq:output-potential} uses squared error as its
notion of discrepancy. Hansen and Ghrist~\cite{hansen2021opinion} observe that
replacing quadratic potentials with other convex functions yields nonlinear
sheaf Laplacians that retain the local update structure. In
\cref{sec:extensions}, we exploit this flexibility to pair $\phi$ with other loss functions in the training setting.


\section{Extensions}\label{sec:extensions}

The sheaf framework developed in
\cref{sec:sheafyNN,sec:convergence} identifies the forward pass as a global
section of a cellular sheaf, computed via heat equation dynamics. The
extensions below build on the distributed, bidirectional nature of this
formulation. Each has a feedforward counterpart: pinning parallels dropout,
restriction map evolution parallels backpropagation, and batch processing is
standard minibatch evaluation. The sheaf perspective restructures these
operations as local consistency problems on a graph, which changes the
geometry of information flow and opens the door to hybrid strategies that mix
feedforward and diffusion-based reasoning.

\subsection{Pinned Neurons}\label{subsec:pinning}

In the feedforward view, fixing a hidden neuron to a target value affects only
downstream layers. The sheaf formulation replaces this asymmetry with a global
consistency problem. When we constrain a neuron, the heat equation propagates
that constraint in both directions along the graph, and the entire network
seeks an equilibrium that accommodates the imposed value while minimizing
total discord. This is the neural analogue of the weighted reluctance
construction of Hansen and Ghrist~\cite{hansen2021opinion}, where a stubborn
``parent'' vertex exerts continuous influence on an agent's expressed opinion.

To pin a subset $J_\ell$ of neurons at layer~$\ell$ to target values
$\mathbf{p} \in \mathbb{R}^{|J_\ell|}$, we augment the sheaf with a new
stubborn vertex $v_{p}$ with stalk $\mathbb{R}^{|J_\ell|}$ holding
the target value~$\mathbf{p}$, connected to
$v_{a^{(\ell)}}$ by a penalty edge $e_{p}$ with
restriction maps
\begin{equation}\label{eq:pinning-maps}
  \mathcal{F}_{v_{a^{(\ell)}} \trianglelefteqslant e_{p}}
    = \sqrt{\gamma}\, P_{J_\ell},
  \qquad
  \mathcal{F}_{v_{p} \trianglelefteqslant e_{p}}
    = \sqrt{\gamma}\, I_{|J_\ell|},
\end{equation}
where $P_{J_\ell}$ projects onto the coordinates in $J_\ell$ and $\gamma > 0$
controls the pinning strength. The penalty edge contributes a term
$\gamma\, P_{J_\ell}^T\bigl(P_{J_\ell}\,\overline{a}^{(\ell)} -
\mathbf{p}\bigr)$ to the Laplacian action at
$v_{a^{(\ell)}}$, pulling the selected neurons
toward~$\mathbf{p}$. As $\gamma \to \infty$ this approaches a hard
constraint; for finite~$\gamma$ the network may deviate from the target if
other edges create sufficient tension. The input vertex $v_x$ in our
construction is itself an instance of this mechanism with
$\gamma = \infty$, and the ones blocks in the extended activations are
another.

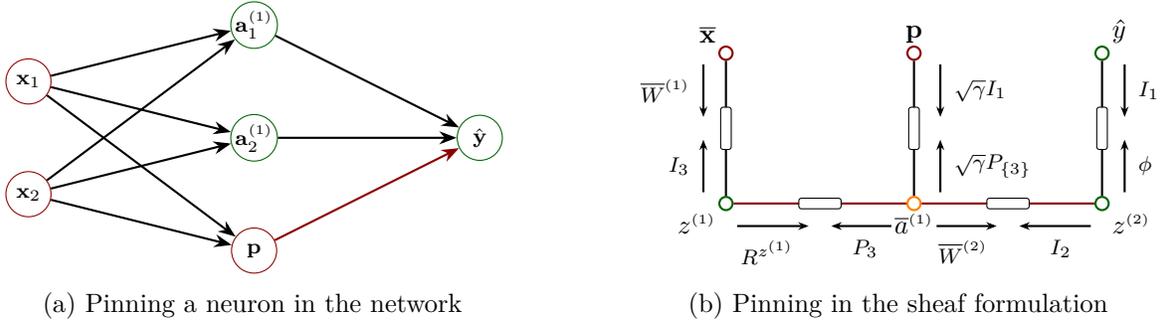
\begin{figure}[ht]
\centering

\begin{subfigure}[b]{0.48\textwidth}
\centering
\begin{tikzpicture}[
    neuron/.style={circle, draw=black, minimum size=0.6cm, inner sep=0pt, font=\scriptsize},
    arrow/.style={->, >=Stealth, thick},
]

\node[neuron, draw=DarkRed] (x1) at (0, 1.5) {$\mathbf{x}_1$};
\node[neuron, draw=DarkRed] (x2) at (0, 0) {$\mathbf{x}_2$};

\node[neuron, draw=DarkGreen] (a1) at (3, 2.25) {$\mathbf{a}_1^{\scriptscriptstyle(1)}$};
\node[neuron, draw=DarkGreen] (a2) at (3, 0.75) {$\mathbf{a}_2^{\scriptscriptstyle(1)}$};
\node[neuron, draw=DarkRed] (a3) at (3, -0.75) {$\mathbf{p}$};

\node[neuron, draw=DarkGreen] (y1) at (6, 0.75) {$\hat{\mathbf{y}}$};

\draw[arrow] (x1) -- (a1);
\draw[arrow] (x1) -- (a2);
\draw[arrow] (x1) -- (a3);
\draw[arrow] (x2) -- (a1);
\draw[arrow] (x2) -- (a2);
\draw[arrow] (x2) -- (a3);

\draw[arrow] (a1) -- (y1);
\draw[arrow] (a2) -- (y1);

\draw[arrow, DarkRed] (a3) -- (y1);

\end{tikzpicture}
\caption{Pinning a neuron in the network}
\label{fig:pin-nn}
\end{subfigure}
\hfill
\begin{subfigure}[b]{0.48\textwidth}
\centering
\begin{tikzpicture}[
    vertex/.style={circle, fill=white, draw=black, thick, minimum size=5pt, inner sep=0pt},
    edge stalk h/.style={rectangle, rounded corners=1pt, fill=white, draw=black, minimum width=16pt, minimum height=4pt, inner sep=0pt},
    edge stalk v/.style={rectangle, rounded corners=1pt, fill=white, draw=black, minimum width=4pt, minimum height=16pt, inner sep=0pt},
    edge stalk h DarkRed/.style={rectangle, rounded corners=1pt, fill=white, draw=DarkRed, minimum width=16pt, minimum height=4pt, inner sep=0pt},
    arrow/.style={-{Stealth[length=4pt]}, thick},
    arrowDarkRed/.style={-{Stealth[length=4pt]}, thick, DarkRed},
    every node/.style={font=\small},
]

\node[vertex, draw=DarkRed] (vx) at (0, 2) {};
\node[vertex, draw=DarkGreen] (vz1) at (0, 0) {};
\node[vertex, draw=DarkOrange1] (va1) at (2.5, 0) {};
\node[vertex, draw=DarkGreen] (vz2) at (5, 0) {};
\node[vertex, draw=DarkGreen] (vy) at (5, 2) {};
\node[vertex, draw=DarkRed] (vp) at (2.5, 2) {};

\node[above left] at (vx) {$\overline{\mathbf{x}}$};
\node[below left] at (vz1) {$z^{\scriptscriptstyle(1)}$};
\node[below] at (va1) {$\overline{a}^{\scriptscriptstyle(1)}$};
\node[below right] at (vz2) {$z^{\scriptscriptstyle(2)}$};
\node[above right] at (vy) {$\hat{y}$};
\node[above] at (vp) {$\mathbf{p}$};

\draw[thick] (vx) -- (vz1);
\draw[thick, DarkRed] (vz1) -- (va1);
\draw[thick, DarkRed] (va1) -- (vz2);
\draw[thick] (vz2) -- (vy);
\draw[thick] (vp) -- (va1);

\node[edge stalk v] (e1) at (0, 1) {};
\node[edge stalk h] (e2) at (1.25, 0) {};
\node[edge stalk h] (e3) at (3.75, 0) {};
\node[edge stalk v] (e4) at (5, 1) {};
\node[edge stalk v] (e5) at (2.5, 1) {};


\draw[arrow] (-0.3, 1.85) -- (-0.3, 1.15);
\draw[arrow] (-0.3, 0.15) -- (-0.3, 0.85);
\node[left, font=\scriptsize] at (-0.35, 1.5) {$\overline{W}^{\scriptscriptstyle(1)}$};
\node[left, font=\scriptsize] at (-0.35, 0.5) {$I_3$};

\draw[arrow] (0.15, -0.3) -- (1.1, -0.3);
\draw[arrow] (2.2, -0.3) -- (1.4, -0.3);
\node[below, font=\scriptsize] at (0.55, -0.35) {$R^{z^{(1)}}$};
\node[below, font=\scriptsize] at (1.85, -0.35) {$P_3$};

\draw[arrow] (2.8, -0.3) -- (3.6, -0.3);
\draw[arrow] (4.85, -0.3) -- (3.9, -0.3);
\node[below, font=\scriptsize] at (3.15, -0.35) {$\overline{W}^{\scriptscriptstyle(2)}$};
\node[below, font=\scriptsize] at (4.45, -0.35) {$I_2$};

\draw[arrow] (5.3, 1.85) -- (5.3, 1.15);
\draw[arrow] (5.3, 0.15) -- (5.3, 0.85);
\node[right, font=\scriptsize] at (5.35, 1.5) {$I_1$};
\node[right, font=\scriptsize] at (5.35, 0.5) {$\phi$};

\draw[arrow] (2.85, 1.85) -- (2.85, 1.15);
\draw[arrow] (2.85, 0.15) -- (2.85, 0.85);
\node[right, font=\scriptsize] at (2.9, 1.5) {$\sqrt{\gamma}I_1$};
\node[right, font=\scriptsize] at (2.9, 0.5) {$\sqrt{\gamma}P_{\{3\}}$};

\end{tikzpicture}
\caption{Pinning in the sheaf formulation}
\label{fig:pin-sheaf}
\end{subfigure}

\caption{\textbf{Pinning a neuron.} (a) Fixing the third hidden neuron to the value $\mathbf{p}$ in the feedforward network affects only the output neuron through the red edge. (b) In the sheaf formulation, the pinned value $\mathbf{p}$ is imposed through the restriction map $P_{\{3\}}$, which projects onto the third component of $a^{(1)}$. The disturbance propagates to both adjacent stalks $z^{(1)}$ and $z^{(2)}$ (red edges).}
\label{fig:pin}
\end{figure}

The modified dynamics take the same form as~\eqref{eq:neural-restricted}:
pinning simply enlarges the boundary data. The pinned coordinates join the
fixed set~$U$, and the target values~$\mathbf{p}$ are appended to~$u$. 

The bidirectional nature of pinning distinguishes it from feedforward
interventions such as dropout~\cite{srivastava2014dropout}.  It is closer
in spirit to relaxation-based inference schemes in which constraints may
be imposed on internal variables of a computation graph
\cite{scellier2017equilibrium,millidge2022predictive}, though the
mechanism here is sheaf diffusion minimizing edgewise discrepancy. In the
feedforward view, fixing a neuron at layer~$\ell$ leaves
layers~$1, \ldots, \ell{-}1$ untouched. In the sheaf dynamics, discrepancies
created by the penalty edge propagate upstream through the weight and
activation edges, forcing earlier layers to adjust. The equilibrium
of the pinned dynamics reflects a global compromise across the entire
network, not a local override.

\begin{note}[Partial clamping]\label{note:partial-clamping}
  The unpinned dynamics converge to a global section of the sheaf, where
  every activation satisfies
  $a^{(\ell)*} = \mathrm{ReLU}(z^{(\ell)*})$ and the total discrepancy
  vanishes (\cref{prop:forward-pass}). Pinning disrupts this guarantee.
  The modified inhomogeneous term
  in the pinned dynamics can shift the equilibrium so that the
  natural computation $\mathrm{ReLU}(z^{(\ell)*})$ conflicts with the
  pinning constraint, forcing the trajectory onto a ReLU boundary where
  some $z^{(\ell)*}_j = 0$. At such a boundary the steady state may
  satisfy $a^{(\ell)*} \neq \mathrm{ReLU}(z^{(\ell)*})$, so the
  equilibrium is no longer a global section in general. Gould's CPWA
  framework~\cite{gould2025thesis} guarantees, via Clarke's generalized
  gradient~\cite{clarke1983optimization}, that the pinned system still
  converges to a generalized critical point. However, this critical point
  need not have zero discrepancy; it is a local minimum of the Lyapunov
  function under the fast selection
  rule~\cite[Lem.~7.4.23]{gould2025thesis}, not necessarily the global
  one.
\end{note}

\subsection{Training via Joint Dynamics}\label{subsec:training}

\Cref{sec:convergence} and \cref{subsec:pinning} treated the restriction
maps as fixed: the network's weights were given, and only the 0-cochain
$\omega$ evolved under the heat equation. Training a neural
network means adjusting the weights themselves. In the sheaf framework,
this amounts to evolving the restriction maps
$\mathcal{F}_{v \trianglelefteqslant e}$ alongside the cochain, so that
the sheaf structure adapts to reduce discrepancy. This is the joint
opinion-expression dynamics
of~\cite{hansen2021opinion,bosca2026selective,gould2025thesis},
transplanted to the neural setting. From the perspective of
learning algorithms this resembles other local learning schemes on
general computation graphs, such as predictive coding networks
\cite{whittington2017predictive,millidge2022predictive,seely2025sheaf}, but here the
update rule arises directly from gradient descent on a global sheaf
discrepancy functional, with weights and activations evolving as a
coupled system rather than in alternating phases.

\subsubsection{Evolving restriction maps}\label{sssec:evolving-maps}

Consider a supervised training pair $(\mathbf{x}, \mathbf{y})$. The input
vertex is already stubborn at $\mathbf{x}$, and following
\cref{subsec:pinning} we fix the output vertex $\hat{y}$ to the
target~$\mathbf{y}$. In general, the current weights cannot produce the
target from the given input, so the equilibrium may retain nonzero discrepancy. The key
observation is that we can also evolve the restriction maps, allowing the
sheaf structure itself to adapt until the 0-cochain becomes a global
section. This is the ``learning to lie'' mechanism from~\cite{hansen2021opinion}. Hernandez Caralt et al.~\cite{hernandez2024joint} apply joint diffusion of node features and restriction maps to learn the structure in a sheaf neural network setting, discretizing the coupled system of~\cite{hansen2021opinion} as a neural network layer. In~\cite{bosca2026selective} we explore a constrained version where only a
designated subset of restriction maps evolves, which is the setting relevant
here, since not all restriction maps in our construction correspond to
trainable parameters.

The general update rule is gradient descent on the local edge discrepancy.
For an edge $e = u \to v$ with restriction maps
$\mathcal{F}_{v \trianglelefteqslant e}$ and
$\mathcal{F}_{u \trianglelefteqslant e}$, the edgewise discrepancy is
$\mathcal{F}_{v \trianglelefteqslant e}\, x_v -
\mathcal{F}_{u \trianglelefteqslant e}\, x_u$, and gradient descent on
$\frac{1}{2}\|\mathcal{F}_{v \trianglelefteqslant e}\, x_v -
\mathcal{F}_{u \trianglelefteqslant e}\, x_u\|^2$ with respect to
$\mathcal{F}_{v \trianglelefteqslant e}$ gives
\begin{equation}\label{eq:restriction-map-evolution}
  \frac{d}{dt}\mathcal{F}_{v \trianglelefteqslant e}
    = -\beta\bigl(\mathcal{F}_{v \trianglelefteqslant e}\, x_v
      - \mathcal{F}_{u \trianglelefteqslant e}\, x_u\bigr)\, x_v^T,
\end{equation}
where $\beta > 0$ is the learning rate for the structural dynamics. Each
restriction map updates independently, using only the discrepancy at its own
edge and the data at the adjacent vertex. This locality is a distinctive
feature of sheaf-based training.

In the neural sheaf, the trainable restriction maps are the extended weight
matrices $\overline{W}^{(\ell)}$ on the weight edges
$e_{z^{(\ell)}}$. Applying the same idea of gradient descent to
these edges, where
$\mathcal{F}_{v_{a^{(\ell-1)}} \trianglelefteqslant
e_{z^{(\ell)}}} = \overline{W}^{(\ell)}$ and
$\mathcal{F}_{v_{z^{(\ell)}} \trianglelefteqslant e_{z^{(\ell)}}} =
I_{n_\ell}$, yields updates for the weight matrix and bias vector
separately:
\begin{equation}\label{eq:weight-bias-updates}
  \frac{d}{dt}W^{(\ell)} = -\beta\, (
\overline{W}^{(\ell)}\,\overline{a}^{(\ell-1)} - z^{(\ell)})\, a^{(\ell-1)T},
  \qquad
  \frac{d}{dt}b^{(\ell)} = -\beta\, (\overline{W}^{(\ell)}\,\overline{a}^{(\ell-1)} - z^{(\ell)}).
\end{equation}
Note that the decomposition into $W^{(\ell)}$ and $b^{(\ell)}$
is enforced by projecting onto the subspace of matrices that preserve the
diagonal sparsity pattern of
$B^{(\ell)} = \mathrm{diag}(b^{(\ell)})$.

\subsubsection{The joint dynamics}\label{sssec:joint-dynamics}

Combining the cochain dynamics~\eqref{eq:neural-restricted} with the weight
updates~\eqref{eq:weight-bias-updates} across all layers yields a coupled
system. To write it compactly, we introduce two projections.

The \emph{free-cochain projection} $P_{\Omega}$ zeros out the
components of $\overline{\omega}$ that correspond to fixed boundary data,
namely the input and output vertices, and the ones blocks in the extended activations.
When the sheaf Laplacian $L_{\mathcal{F}}\,\overline{\omega}$ acts on the
full cochain, $P_{\Omega}$ extracts only the update to the free
coordinates $\omega$, leaving the stubborn coordinates untouched.

The \emph{trainable-map projection} $\Pi_{W}$ zeros out velocity
components for restriction maps that should not evolve, namely the identity
maps on weight edges, the ReLU diagonal matrices on activation edges, and
the projection maps $P_{n_\ell}$. It also enforces the diagonal sparsity of
$B^{(\ell)}$. What remains after projection are the updates to the
extended weight matrices $\overline{W}^{(\ell)}$, the only restriction maps
we allow to evolve.

With these projections, the \emph{joint dynamics} take the form
\begin{equation}\label{eq:joint-dynamics}
\begin{aligned}
  \frac{d\omega}{dt}
    &= -\alpha \, P_{\Omega}
      \bigl(\delta(t)^T \delta(t)\, \overline{\omega}(t)\bigr), \\[4pt]
  \frac{d\delta}{dt}
    &= -\beta \, \Pi_{W}
      \bigl(\delta(t)\, \overline{\omega}(t)\,
        \overline{\omega}(t)^T\bigr),
\end{aligned}
\end{equation}
where $\delta$ denotes the coboundary operator encoding all restriction maps.
The first equation is sheaf diffusion restricted to free coordinates, as
in~\eqref{eq:neural-restricted}, but now with time-varying restriction maps.
The second equation aggregates the edgewise
updates~\eqref{eq:restriction-map-evolution} across all weight edges. Both
equations are gradient descent on the total discrepancy energy
$\Psi(\overline{\omega}, \delta) = \frac{1}{2}\|\delta\,\overline{\omega}\|^2$
with respect to $\overline{\omega}$ and $\delta$ respectively. The rates $\alpha$ and
$\beta$ control the relative speeds of cochain and weight updates.

In the language of~\cite{bosca2026selective}, this evolution can also be
interpreted through an auxiliary \emph{parameter sheaf}
$\mathcal{H}^{W}$ whose vertex data are spaces of
admissible restriction maps, and whose own Laplacian depends on the current
cochain $\overline{\omega}$. We defer this interpretation, along with a
detailed figure of the coupled sheaves, to Appendix~\ref{ap:structure-sheaf}.

\subsubsection{Loss functions and the output edge}\label{sssec:loss-functions}

The edge potential framework from \cref{subsec:final-activation} extends
naturally to training. We identify the output edge potential
$U_{\mathrm{out}}$ with a loss function
$\mathcal{L}(\phi(z^{(k+1)}), \mathbf{y})$, where $\phi$ is the final
activation and $\hat{y} = \mathbf{y}$ is the target output, held fixed as
a stubborn vertex.

We say that a loss function fits the \emph{discrepancy framework} if it
depends on predictions and targets only through their difference
$d = \hat{y} - \mathbf{y}$, so that
$U_{\mathrm{out}}(\hat{y}, \mathbf{y}) = f(d)$ for some convex function
$f \colon \mathbb{R}^{n_{k+1}} \to \mathbb{R}$. In this case, the
gradient $\nabla f(d)$ produces a nonlinear sheaf Laplacian
$L_{\mathcal{F}}^{\nabla f} = \delta^T \circ \nabla f \circ \delta$ in the
sense of Hansen and Ghrist~\cite[\S10]{hansen2021opinion}. The squared
error $f(d) = \frac{1}{2}\|d\|^2$ recovers the standard sheaf Laplacian.
The L1 loss $f(d) = \|d\|_1$ has gradient
$(\nabla f(d))_i = \mathrm{sign}(d_i)$.
The formulas for other losses in this family (Huber, $p$-norm) can be found
in Appendix~\ref{ap:edge-potentials}.

Cross-entropy
$\mathcal{L}(\hat{y}, \mathbf{y}) = -\sum_i y_i \log \hat{y}_i$ does not
fit the discrepancy framework, since it depends on the predicted
probabilities $\hat{y}$ and the target $\mathbf{y}$ separately rather than
on their difference. However, paired with the local adjoint perspective from
\cref{subsec:final-activation}, the expression simplifies. For softmax
$\phi(z)_i = e^{z_i}/\sum_j e^{z_j}$, a short calculation gives
\begin{equation}\label{eq:ce-simplification}
  J_\phi(z^{(k+1)})^T \nabla_{\hat{y}}\, \mathcal{L}
    = \phi(z^{(k+1)}) - \mathbf{y},
\end{equation}
and the same identity holds for sigmoid with binary cross-entropy. The
Jacobian of the activation cancels with the gradient structure of the loss,
so the resulting dynamics on the output edge takes the discrepancy back via the identity. Classification training thus fits within the
same local update framework as regression.

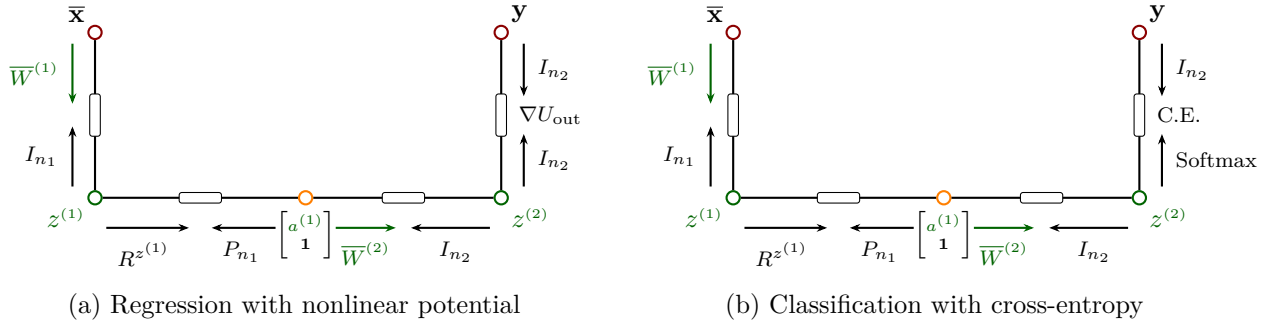
\begin{figure}[H]
\centering
\begin{subfigure}[b]{0.48\textwidth}
\centering
\begin{tikzpicture}[
    vertex/.style={circle, fill=white, draw=black, thick, minimum size=5pt, inner sep=0pt},
    edge stalk h/.style={rectangle, rounded corners=1pt, fill=white, draw=black, minimum width=16pt, minimum height=4pt, inner sep=0pt},
    edge stalk v/.style={rectangle, rounded corners=1pt, fill=white, draw=black, minimum width=4pt, minimum height=16pt, inner sep=0pt},
    arrow/.style={-{Stealth[length=4pt]}, thick},
    green arrow/.style={-{Stealth[length=4pt]}, thick, color=DarkGreen},
    every node/.style={font=\small},
]

\node[vertex, draw=DarkRed] (vx) at (0, 2.2) {};
\node[vertex, draw=DarkGreen] (vz1) at (0, 0) {};
\node[vertex, draw=DarkOrange1] (va1) at (2.8, 0) {};
\node[vertex, draw=DarkGreen] (vz2) at (5.4, 0) {};
\node[vertex, draw=DarkRed] (vy) at (5.4, 2.2) {};

\node[above left] at (vx) {$\overline{\mathbf{x}}$};
\node[below left, color=DarkGreen] at (vz1) {$z^{\scriptscriptstyle(1)}$};
\node[below=1pt, font=\tiny] at (va1) {$\begin{bmatrix} \textcolor{DarkGreen}{a^{\scriptscriptstyle(1)}} \\ \mathbf{1} \end{bmatrix}$};
\node[below right, color=DarkGreen] at (vz2) {$z^{\scriptscriptstyle(2)}$};
\node[above right] at (vy) {$\mathbf{y}$};

\draw[thick] (vx) -- (vz1);
\draw[thick] (vz1) -- (va1);
\draw[thick] (va1) -- (vz2);
\draw[thick] (vz2) -- (vy);

\node[edge stalk v] (e1) at (0, 1.1) {};
\node[edge stalk h] (e2) at (1.4, 0) {};
\node[edge stalk h] (e3) at (4.1, 0) {};
\node[edge stalk v] (e4) at (5.4, 1.1) {};

\draw[green arrow] (-0.3, 2.05) -- (-0.3, 1.25);
\draw[arrow] (-0.3, 0.15) -- (-0.3, 0.95);
\node[left, font=\scriptsize, color=DarkGreen] at (-0.35, 1.65) {$\overline{W}^{\scriptscriptstyle(1)}$};
\node[left, font=\scriptsize] at (-0.35, 0.55) {$I_{n_1}$};

\draw[arrow] (0.15, -0.4) -- (1.25, -0.4);
\draw[arrow] (2.4, -0.4) -- (1.55, -0.4);
\node[below, font=\scriptsize] at (0.65, -0.45) {$R^{z^{(1)}}$};
\node[below, font=\scriptsize] at (1.95, -0.45) {$P_{n_1}$};

\draw[green arrow] (3.2, -0.4) -- (4.0, -0.4);
\draw[arrow] (5.25, -0.4) -- (4.2, -0.4);
\node[below, font=\scriptsize, color=DarkGreen] at (3.6, -0.45) {$\overline{W}^{\scriptscriptstyle(2)}$};
\node[below, font=\scriptsize] at (4.8, -0.45) {$I_{n_2}$};

\draw[arrow] (5.7, 2.05) -- (5.7, 1.35);
\node[right, font=\scriptsize] at (5.75, 1.7) {$I_{n_2}$};
\node[right, font=\scriptsize] at (5.5, 1.1) {$\nabla U_{\text{out}}$};
\draw[arrow] (5.7, 0.15) -- (5.7, 0.85);
\node[right, font=\scriptsize] at (5.75, 0.5) {$I_{n_2}$};

\end{tikzpicture}
\caption{Regression with nonlinear potential}
\end{subfigure}
\hspace{0.02\textwidth}
\begin{subfigure}[b]{0.48\textwidth}
\centering
\begin{tikzpicture}[
    vertex/.style={circle, fill=white, draw=black, thick, minimum size=5pt, inner sep=0pt},
    edge stalk h/.style={rectangle, rounded corners=1pt, fill=white, draw=black, minimum width=16pt, minimum height=4pt, inner sep=0pt},
    edge stalk v/.style={rectangle, rounded corners=1pt, fill=white, draw=black, minimum width=4pt, minimum height=16pt, inner sep=0pt},
    arrow/.style={-{Stealth[length=4pt]}, thick},
    green arrow/.style={-{Stealth[length=4pt]}, thick, color=DarkGreen},
    every node/.style={font=\small},
]

\node[vertex, draw=DarkRed] (vx) at (0, 2.2) {};
\node[vertex, draw=DarkGreen] (vz1) at (0, 0) {};
\node[vertex, draw=DarkOrange1] (va1) at (2.8, 0) {};
\node[vertex, draw=DarkGreen] (vz2) at (5.4, 0) {};
\node[vertex, draw=DarkRed] (vy) at (5.4, 2.2) {};

\node[above left] at (vx) {$\overline{\mathbf{x}}$};
\node[below left, color=DarkGreen] at (vz1) {$z^{\scriptscriptstyle(1)}$};
\node[below=1pt, font=\tiny] at (va1) {$\begin{bmatrix} \textcolor{DarkGreen}{a^{\scriptscriptstyle(1)}} \\ \mathbf{1} \end{bmatrix}$};
\node[below right, color=DarkGreen] at (vz2) {$z^{\scriptscriptstyle(2)}$};
\node[above right] at (vy) {$\mathbf{y}$};

\draw[thick] (vx) -- (vz1);
\draw[thick] (vz1) -- (va1);
\draw[thick] (va1) -- (vz2);
\draw[thick] (vz2) -- (vy);

\node[edge stalk v] (e1) at (0, 1.1) {};
\node[edge stalk h] (e2) at (1.4, 0) {};
\node[edge stalk h] (e3) at (4.1, 0) {};
\node[edge stalk v] (e4) at (5.4, 1.1) {};

\draw[green arrow] (-0.3, 2.05) -- (-0.3, 1.25);
\draw[arrow] (-0.3, 0.15) -- (-0.3, 0.95);
\node[left, font=\scriptsize, color=DarkGreen] at (-0.35, 1.65) {$\overline{W}^{\scriptscriptstyle(1)}$};
\node[left, font=\scriptsize] at (-0.35, 0.55) {$I_{n_1}$};

\draw[arrow] (0.15, -0.4) -- (1.25, -0.4);
\draw[arrow] (2.4, -0.4) -- (1.55, -0.4);
\node[below, font=\scriptsize] at (0.65, -0.45) {$R^{z^{(1)}}$};
\node[below, font=\scriptsize] at (1.95, -0.45) {$P_{n_1}$};

\draw[green arrow] (3.2, -0.4) -- (4.0, -0.4);
\draw[arrow] (5.25, -0.4) -- (4.2, -0.4);
\node[below, font=\scriptsize, color=DarkGreen] at (3.6, -0.45) {$\overline{W}^{\scriptscriptstyle(2)}$};
\node[below, font=\scriptsize] at (4.8, -0.45) {$I_{n_2}$};

\draw[arrow] (5.7, 2.05) -- (5.7, 1.35);
\node[right, font=\scriptsize] at (5.75, 1.7) {$I_{n_2}$};
\node[right, font=\scriptsize] at (5.5, 1.1) {C.E.};
\draw[arrow] (5.7, 0.15) -- (5.7, 0.85);
\node[right, font=\scriptsize] at (5.75, 0.5) {$\text{Softmax}$};

\end{tikzpicture}
\caption{Classification with cross-entropy}
\end{subfigure}

\caption{Two training configurations for a 1-hidden-layer network. Red nodes are pinned boundary conditions, green nodes are dynamic pre-activation variables, and yellow nodes are dynamic post-activation variables with some part fixed. Green arrows represent trainable weight matrices. \textbf{Left:} Regression with identity activation composed with gradient of nonlinear potential $\nabla U$, enabling other losses beyond L2, like L1, Lp, or Huber. \textbf{Right:} Classification with Softmax/Sigmoid activation and cross-entropy loss, where dynamics simplify to allow a sheaf-like construction.}
\label{fig:training}
\end{figure}

\subsubsection{Regularization}\label{sssec:regularization}

The total discrepancy
$\Psi(\overline{\omega}, \delta) = \frac{1}{2}\|\delta\,\overline{\omega}\|^2$ is not
coercive. A priori, there is no guarantee that trajectories
of~\eqref{eq:joint-dynamics} remain bounded, since the cochain and the
coboundary operator can grow simultaneously while
$\delta\,\overline{\omega}$ stays small. Standard L2 regularization
resolves this. The \emph{regularized energy functional} is
\begin{equation}\label{eq:regularized-energy-training}
  \mathcal{L}_{\lambda,\mu}(\overline{\omega}, \delta)
    = \tfrac{1}{2}\|\delta\,\overline{\omega}\|^2
      + \tfrac{\lambda}{2}\|\omega\|^2
      + \tfrac{\mu}{2}\|\delta\|_F^2,
\end{equation}
where $\lambda, \mu > 0$ are regularization strengths. The gradient descent
dynamics of $\mathcal{L}_{\lambda,\mu}$ are
\begin{equation}\label{eq:regularized-joint}
\begin{aligned}
  \frac{d\omega}{dt}
    &= -\alpha P_{\Omega}\Bigl(
      \delta^T \delta\,\overline{\omega}
        + \lambda\, \overline{\omega} \Bigr), \\[4pt]
  \frac{d\delta}{dt}
    &= -\beta \, \Pi_{W} \Bigl(
      \delta\, \overline{\omega}\, \overline{\omega}^T
        + \mu\, \delta \Bigr).
\end{aligned}
\end{equation}
The cochain term $(\lambda/2)\|\omega\|^2$ penalizes large free
coordinates, acting as weight decay on the internal state. The weight term
$(\mu/2)\|\delta\|_F^2$ penalizes large restriction maps, corresponding to
standard weight decay on the network parameters. Both regularization terms
admit sheaf-theoretic interpretations as weighted reluctance, as shown
in~\cite{bosca2026selective,hansen2021opinion}. The general form with
nonzero anchoring, its connection to the framework
in~\cite{bosca2026selective}, and the diagrammatic realization within the
sheaf are developed in Appendix~\ref{ap:regularization}.

\begin{note}[Nonlinear regularization]
  The L2 penalties above correspond to quadratic edge potentials on the
  regularization edges. Just as in \cref{sssec:loss-functions}, these can
  be replaced by any convex function of the discrepancy: L1 regularization
  uses $f(d) = \|d\|_1$, elastic net combines L1 and L2, and so on.
  Each choice produces a different nonlinear Laplacian on the augmented
  graph, selecting for different sparsity or smoothness properties of
  the equilibrium.
\end{note}

\subsubsection{Batch Processing}\label{subsec:batch}

The development so far treats a single input $\mathbf{x} \in \mathbb{R}^{n_0}$.
Training and inference require processing multiple inputs simultaneously. The
extension is straightforward. Given a batch
$\mathbf{X} = (\mathbf{x}_1, \ldots, \mathbf{x}_M) \in
\mathbb{R}^{n_0 \times M}$, we replace each vertex stalk
$\mathbb{R}^{n_\ell}$ with $\mathbb{R}^{n_\ell \times M}$, so that
the 0-cochain stores $M$ columns rather than one. The restriction maps
$\overline{W}^{(\ell)}$ act identically on each column, since they encode
network structure rather than data.

The ReLU operation requires more care. For a single input, ReLU is encoded
by the diagonal matrix $R^{z^{(\ell)}}$ acting via matrix multiplication.
For a batch, each column $z^{(\ell)}_m$ activates a different subset of
neurons, so there is no single matrix that applies to the full batch. Instead,
we define a mask matrix
$\mathrm{MASK}^{Z^{(\ell)}} \in \{0,1\}^{n_\ell \times M}$ with entries
$\mathrm{MASK}^{Z^{(\ell)}}_{im} = 1$ if $Z^{(\ell)}_{im} \geq 0$ and $0$
otherwise, and replace matrix multiplication by coordinatewise (Hadamard)
multiplication: $A^{(\ell)} = \mathrm{MASK}^{Z^{(\ell)}}\odot Z^{(\ell)}$.
This is no longer a linear map between stalks in the classical sheaf sense. The batch dynamics are equivalent to $M$ independent sheaves sharing the same
restriction maps but with separate activation patterns.

\begin{figure}[htbp]
    \centering
    \begin{tikzpicture}[
    vertex/.style={circle, fill=white, draw=black, thick, minimum size=6pt, inner sep=0pt},
    edge stalk h/.style={rectangle, rounded corners=1pt, fill=white, draw=black, minimum width=16pt, minimum height=4pt, inner sep=0pt},
    edge stalk v/.style={rectangle, rounded corners=1pt, fill=white, draw=black, minimum width=4pt, minimum height=16pt, inner sep=0pt},
    arrow/.style={-{Stealth[length=4pt]}, thick},
    every node/.style={font=\small},
    bendy arrow/.style={-{Stealth[length=4pt]}, thick}
]


\coordinate (T_Z1) at (0, 0);
\coordinate (T_X)  at (0, 2);
\coordinate (T_A1) at (6.0, 0); 
\coordinate (T_Z2) at (11.0, 0); 
\coordinate (T_Y)  at (11.0, 2);

\node[vertex, draw=DarkRed] (tx) at (T_X) {};
\node[vertex, draw=DarkGreen] (tz1) at (T_Z1) {};
\node[vertex, draw=DarkOrange1] (ta1) at (T_A1) {};
\node[vertex, draw=DarkGreen] (tz2) at (T_Z2) {};
\node[vertex, draw=DarkRed] (ty) at (T_Y) {};

\node[above left] at (tx) {$\overline{\mathbf{X}}$};
\node[below left] at (tz1) {$Z^{\scriptscriptstyle(1)}$};
\node[below] at (ta1) {$\overline{A}^{\scriptscriptstyle(1)}$};
\node[below right] at (tz2) {$Z^{\scriptscriptstyle(2)}$};
\node[above right] at (ty) {$\mathbf{Y}$};

\draw[thick] (tx) -- (tz1);
\draw[thick] (tz1) -- (ta1);
\draw[thick] (ta1) -- (tz2);
\draw[thick] (tz2) -- (ty);

\node[edge stalk v] at (0, 1) {};
\node[edge stalk h] at (3.0, 0) {}; 
\node[edge stalk h] at (8.5, 0) {}; 
\node[edge stalk v] at (11.0, 1) {};


\draw[arrow] (-0.3, 1.85) -- (-0.3, 1.15);
\draw[arrow] (-0.3, 0.15) -- (-0.3, 0.85);
\node[left, font=\scriptsize] at (-0.35, 1.5) {$\overline{W}^{\scriptscriptstyle(1)}$};
\node[left, font=\scriptsize] at (-0.35, 0.5) {$I_{n_1\times M}$};

\draw[arrow] (0.2, -0.3) -- (2.5, -0.3);
\draw[arrow] (5.7, -0.3) -- (3.5, -0.3);
\node[below, font=\scriptsize] at (1.35, -0.35) {$\text{MASK}^{Z^{(1)}}\odot$};
\node[below, font=\scriptsize] at (4.65, -0.35) {$P_{n_1\times M}$};

\draw[arrow] (6.4, -0.3) -- (8.0, -0.3);
\draw[arrow] (10.8, -0.3) -- (9.0, -0.3);
\node[below, font=\scriptsize] at (7.2, -0.35) {$\overline{W}^{\scriptscriptstyle(2)}$};
\node[below, font=\scriptsize] at (9.9, -0.35) {$I_{n_2\times M}$};

\draw[arrow] (11.3, 1.85) -- (11.3, 1.15);
\draw[arrow] (11.3, 0.15) -- (11.3, 0.85);
\node[right, font=\scriptsize] at (11.35, 1.5) {$I_{{n_2}\times M}$};
\node[right, font=\scriptsize] at (11.35, 0.5) {$\phi$};

\node[font=\LARGE] at (5.5, -1.5) {$\equiv$};


\begin{scope}[shift={(0,-4.5)}]

    \coordinate (B_Z1) at (0, 0);
    \coordinate (B_X)  at (0, 2);
    \coordinate (B_A1) at (7.0, 0); 
    \coordinate (B_Z2) at (11.0, 0);
    \coordinate (B_Y)  at (11.0, 2);

    \node[vertex, draw=DarkRed] (bx) at (B_X) {};
    \node[vertex, draw=DarkGreen] (bz1) at (B_Z1) {};
    \node[vertex, draw=DarkOrange1] (ba1) at (B_A1) {};
    \node[vertex, draw=DarkGreen] (bz2) at (B_Z2) {};
    \node[vertex, draw=DarkRed] (by) at (B_Y) {};

    \node[above left] at (bx) {$\overline{\mathbf{X}}$};
    \node[below left] at (bz1) {$Z^{\scriptscriptstyle(1)}$};
    \node[below] at (ba1) {$\overline{A}^{\scriptscriptstyle(1)}$};
    \node[below right] at (bz2) {$Z^{\scriptscriptstyle(2)}$};
    \node[above right] at (by) {$\mathbf{Y}$};

    \draw[thick] (bx) -- (bz1);
    \draw[thick] (ba1) -- (bz2);
    \draw[thick] (bz2) -- (by);
    
    \node[edge stalk v] at (0, 1) {};
    \node[edge stalk h] at (9.0, 0) {}; 
    \node[edge stalk v] at (11.0, 1) {};

    
    \def\SplitStart{0.8}
    \def\SplitEnd{5.5}
    \def\CurveLen{1.0}
    \def\StraightStart{1.8} 
    \def\StraightEnd{4.5}   
    
    \def\YTop{0.9}
    \def\YInner{0.35}
    
    \draw[thick] (bz1) -- (\SplitStart, 0);
    \draw[thick] (\SplitEnd, 0) -- (ba1);
    
    \coordinate (S) at (\SplitStart, 0);
    \coordinate (E) at (\SplitEnd, 0);
    
    \draw[thick] (S) to[out=0, in=180] (\StraightStart, \YTop);
    \draw[thick] (\StraightStart, \YTop) -- (\StraightEnd, \YTop) node[edge stalk h, pos=0.5] {};
    \draw[thick] (\StraightEnd, \YTop) to[out=0, in=180] (E);

    \draw[thick] (S) to[out=0, in=180] (\StraightStart, \YInner);
    \draw[thick] (\StraightStart, \YInner) -- (\StraightStart + 0.5, \YInner); 
    \draw[thick] (\StraightEnd - 0.5, \YInner) -- (\StraightEnd, \YInner);   
    \draw[thick] (\StraightEnd, \YInner) to[out=0, in=180] (E);

    \draw[thick] (S) to[out=0, in=180] (\StraightStart, -\YInner);
    \draw[thick] (\StraightStart, -\YInner) -- (\StraightStart + 0.5, -\YInner);
    \draw[thick] (\StraightEnd - 0.5, -\YInner) -- (\StraightEnd, -\YInner);
    \draw[thick] (\StraightEnd, -\YInner) to[out=0, in=180] (E);

    \draw[thick] (S) to[out=0, in=180] (\StraightStart, -\YTop);
    \draw[thick] (\StraightStart, -\YTop) -- (\StraightEnd, -\YTop) node[edge stalk h, pos=0.5] {};
    \draw[thick] (\StraightEnd, -\YTop) to[out=0, in=180] (E);

    \pgfmathsetmacro{\MidX}{(\SplitStart + \SplitEnd)/2}
    \fill (\MidX, 0.15) circle (1pt);
    \fill (\MidX, 0) circle (1pt);
    \fill (\MidX, -0.15) circle (1pt);


    \draw[arrow] (-0.3, 1.85) -- (-0.3, 1.15);
    \draw[arrow] (-0.3, 0.15) -- (-0.3, 0.85);
    \node[left, font=\scriptsize] at (-0.35, 1.5) {$\overline{W}^{\scriptscriptstyle(1)}$};
    \node[left, font=\scriptsize] at (-0.35, 0.5) {$I_{n_1\times M}$};

    
    \draw[bendy arrow] (\SplitStart, 0.4) to[out=0, in=180] (\StraightStart, \YTop + 0.4);
    \node[above, font=\scriptsize] at (\StraightStart, \YTop + 0.4) {$R^{z^{\scriptscriptstyle(1),1}}$};

    \draw[bendy arrow] (\SplitStart, -0.4) to[out=0, in=180] (\StraightStart, -\YTop - 0.4);
    \node[below, font=\scriptsize] at (\StraightStart, -\YTop - 0.4) {$R^{z^{\scriptscriptstyle(1),M}}$};

    
    \draw[bendy arrow] (\SplitEnd, 0.4) to[out=180, in=0] (\StraightEnd, \YTop + 0.4);
    \node[above, font=\scriptsize] at (\StraightEnd, \YTop + 0.4) {$P_{n_1,1}$};

    \draw[bendy arrow] (\SplitEnd, -0.4) to[out=180, in=0] (\StraightEnd, -\YTop - 0.4);
    \node[below, font=\scriptsize] at (\StraightEnd, -\YTop - 0.4) {$P_{n_1,M}$};

    \draw[arrow] (7.4, -0.3) -- (8.8, -0.3);
    \draw[arrow] (10.8, -0.3) -- (9.2, -0.3);
    \node[below, font=\scriptsize] at (8.1, -0.35) {$\overline{W}^{\scriptscriptstyle(2)}$};
    \node[below, font=\scriptsize] at (10.0, -0.35) {$I_{n_2\times M}$};

    \draw[arrow] (11.3, 1.85) -- (11.3, 1.15);
    \draw[arrow] (11.3, 0.15) -- (11.3, 0.85);
    \node[right, font=\scriptsize] at (11.35, 1.5) {$I_{{n_2}\times M}$};
    \node[right, font=\scriptsize] at (11.35, 0.5) {$\phi$};

\end{scope}

\end{tikzpicture}
    \caption{Batch processing for a one-hidden-layer network with batch
      size~$M$. \textbf{Top}: the batch sheaf with matrix-valued stalks, where
      ReLU acts via coordinatewise masking. \textbf{Bottom}: the equivalent
      representation as $M$ parallel activation edges, each carrying
      its own diagonal ReLU matrix $R^{z^{(1),m}}$ and projection
      $P_{n_1,m}$. The weight edges are shared across all inputs.}
    \label{fig:batch_processing}
\end{figure}
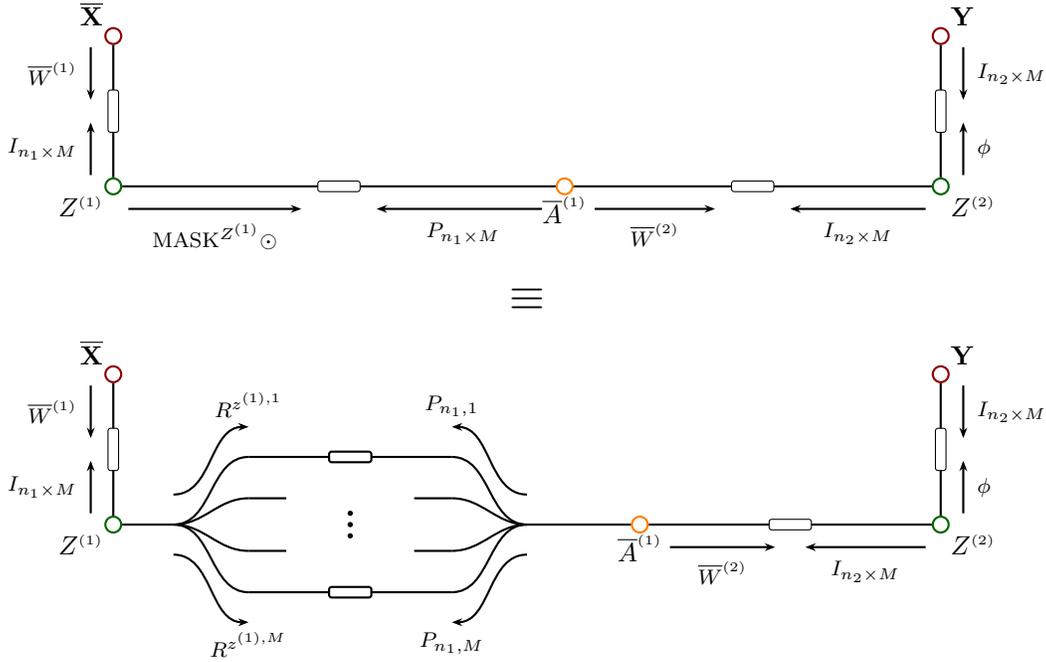

\subsection{Convergence and Timescale Considerations}\label{subsec:timescale}

The joint dynamics~\eqref{eq:joint-dynamics} sit at the intersection of
several existing frameworks, but are not fully covered by any one of them.
The analysis in~\cite{bosca2026selective} establishes convergence for joint
dynamics in the affine setting, without the piecewise linear switching
introduced by ReLU activations.
Gould~\cite{gould2025thesis} extends convergence theory to CPWA sheaves,
but treats only the case of evolving 0-cochains with fixed restriction maps.
Our setting requires both simultaneously: the coboundary operator $\delta$
and the 0-cochain (hence the polyhedral partition) change together.
We now examine what is known about convergence in this combined regime and
identify the open problems that remain.

\paragraph{Two obstructions to convergence.}
Extending convergence theory to the full
joint CPWA setting faces two obstructions. The first is \emph{norm blowup}:
the bilinear coupling $\delta\,\overline{\omega}$ can remain small while
$\|\omega\| + \|\delta\|_F \to \infty$, so the unregularized energy
$\Psi(\overline{\omega}, \delta) = \frac{1}{2}\|\delta\,\overline{\omega}\|^2$ has no
minimizer and trajectories may escape to infinity. The regularized
energy~\eqref{eq:regularized-energy-training} is coercive and rules out
this pathology, although we never saw it in practice with the unregularized potential.

The second obstruction is more subtle: \emph{persistent noncritical sliding}
along ReLU switching surfaces. At a boundary between activation regions, the
Filippov differential inclusion admits convex combinations of the one-sided
gradient flows. If these convex combinations include velocities tangent to
the boundary that neither leave the surface nor strictly decrease the energy,
trajectories can slide indefinitely without reaching a critical point.

In the fixed-weight setting of~\cref{sec:convergence}, sliding along
switching surfaces does not obstruct convergence: we show in the proof of
\cref{thm:final} that the energy decreases even during Filippov sliding. Phase-plane analysis
(\cref{subsec:convergence-experiments}) shows that ReLU boundary
crossings are transient; transient Filippov sliding is observed but does
not impede convergence (\cref{ap:ext-convergence}).
When weights evolve simultaneously, the switching surfaces themselves move,
and whether the energy remains nonincreasing during sliding in this coupled
setting is an open question.

\paragraph{Timescale separation.}
The ratio $\alpha/\beta$ controls the relative speeds of 0-cochain and weight
updates, and the timescale analysis in~\cite{bosca2026selective} provides
quantitative guidance. When $\alpha \gg \beta$, the 0-cochain equilibrates
rapidly while weights change slowly; this resembles standard training where
inference is essentially instantaneous relative to parameter updates. The
structural stagnation bound~\cite[Prop.~5.1]{bosca2026selective} quantifies
how much the weights can drift during this equilibration. For a trajectory
$(\omega(t), \delta(t))$ evolving over a time interval $[0,T]$, the total
weight displacement satisfies
\begin{equation}\label{eq:structural-stagnation}
  \|\delta(T) - \delta_0\|_F
    \;\leq\;  \frac{\beta\, B_\omega\, \|\delta_0\, \overline{\omega}_0\|}
                   {\alpha\, \lambda_{\mathrm{eff}}},
\end{equation}
where $B_\omega$ bounds the 0-cochain norm along the trajectory,
$\|\delta_0\,\overline{\omega}_0\|$ is the initial disagreement, and
$\lambda_{\mathrm{eff}}$ is an effective spectral gap measuring how
efficiently projected diffusion dissipates energy. The bound has a direct
interpretation: weight drift is proportional to the rate ratio
$\beta/\alpha$, scales with the 0-cochain magnitude, and is suppressed by
efficient fast dynamics (large $\lambda_{\mathrm{eff}}$). Since
$\lambda_{\mathrm{eff}}$ depends on the structure of the sheaf graph,
deeper networks with longer path graphs can be expected to have smaller
effective spectral gaps and hence slower equilibration per unit time. A
symmetric bound~\cite[Prop.~5.3]{bosca2026selective} governs 0-cochain
displacement when $\beta \gg \alpha$. These bounds are derived for affine
sheaves, so they apply within each activation region of our piecewise
linear setting, but a priori there is no guarantee that they compose
across region boundaries.

\paragraph{Experimental guidance from the stagnation bound.}
The bound~\eqref{eq:structural-stagnation} provides concrete guidance for
parameter selection. We fix $\alpha \sim 1$ and treat
$\lambda_{\mathrm{eff}}$ as independent of the number of samples in the batch $M$ (empirically, we observe it depends on network
depth, but not on batch size), leaving $\beta$ as the free parameter.
The remaining quantities depend on the initialization and $M$.

We initialize the 0-cochain with normalized values at each vertex stalk, so
that each of the $M$ columns in the batch 0-cochain contributes $O(1)$ to
the Frobenius norm \cite{He2015Delving}. This gives $B_\omega \sim \sqrt{M}$. The initial
disagreement aggregates $M$ columns of per-sample disagreement, each of
order $O(1)$ for randomly initialized weights, so
$\|\delta_0\,\overline{\omega}_0\| \sim \sqrt{M}$. Substituting into the
stagnation bound~\eqref{eq:structural-stagnation}:
\begin{equation}\label{eq:beta-scaling}
  \frac{\beta  B_\omega 
    \|\delta_0\,\overline{\omega}_0\|}
       {\alpha  \lambda_{\mathrm{eff}}}
  \;\sim\; \beta\, M.
\end{equation}
For the weight displacement to remain $O(1)$ during each equilibration
cycle, we need $\beta \sim 1/M$. This scaling ensures that weights do
not drift faster than the stalks can track through the sheaf graph.
\Cref{subsec:training-experiments} confirms this prediction: setting
$\beta = 1/n_{\mathrm{train}}$ works across all tasks and depths
without per-task tuning, while deviations in either direction degrade
performance.


\section{Experiments}\label{sec:experiments}

We implement the sheaf construction and the joint training dynamics from
\cref{sec:sheafyNN,sec:extensions} in a PyTorch library\footnote{\url{https://github.com/vicenbosca/neural-sheaf}} that encodes
feedforward ReLU networks as \texttt{NeuralSheaf} objects and integrates the
restricted heat equation via forward Euler. All computations use
\texttt{float64} precision on networks with at most 30 neurons per
hidden layer and two-dimensional synthetic data. We begin with the
inference dynamics (\cref{subsec:convergence-experiments}), visualizing how
the heat equation evolves the stalks toward the forward pass output.
\Cref{subsec:training-experiments} tests whether the joint dynamics from
\cref{subsec:training} can learn from data.
\Cref{subsec:diagnostics} illustrates two kinds of structural analysis that
the sheaf embedding makes accessible: spectral decomposition of the
restricted Laplacian, and per-edge discord as a local consistency measure.

\subsection{Convergence and Dynamics Visualization}
\label{subsec:convergence-experiments}

Given a trained network, we encode it as a sheaf, initialize all free stalks
to random values, and run the restricted heat
equation~\eqref{eq:neural-restricted}. \Cref{fig:convergence} shows the
resulting dynamics for a $[2,4,1]$ network: total discord decreases
monotonically on a log scale, with all edge types and layers converging to
machine precision. The output stalk trajectory converges from its random
starting point to the forward pass value.


\begin{figure}[ht]
  \centering
  \begin{subfigure}[t]{0.48\textwidth}
    \centering
    \includegraphics[width=\textwidth]{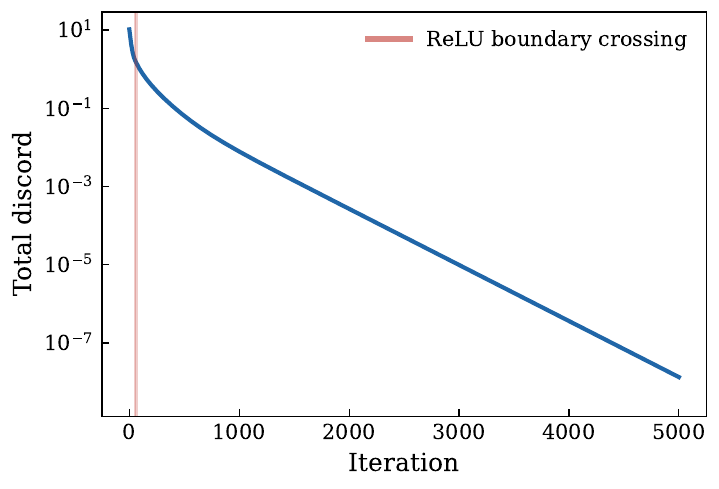}
    \caption{Total discord.}
    \label{fig:total-discord-1H}
  \end{subfigure}
  \hfill
  \begin{subfigure}[t]{0.48\textwidth}
    \centering
    \includegraphics[width=\textwidth]{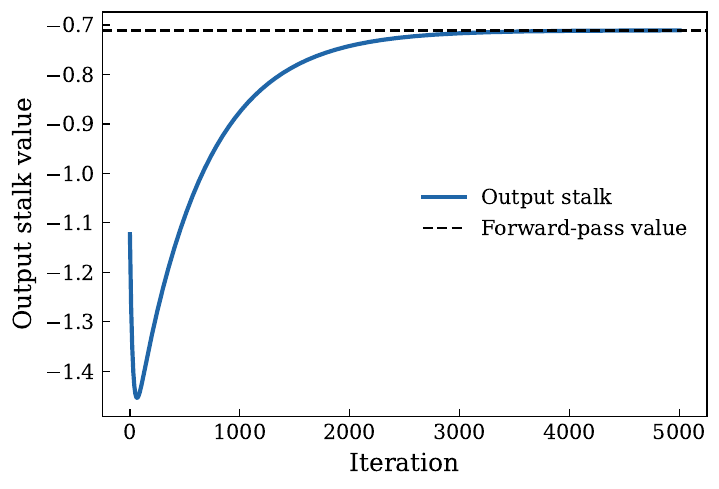}
    \caption{Output convergence.}
    \label{fig:output-1H}
  \end{subfigure}

  \caption{Convergence dynamics for a $[2,4,1]$ network with identity
  output activation, $\alpha = 1.0$, $dt = 0.01$, stalks initialized
  randomly.
    \textbf{(a)}~Total discord decreases monotonically on a log scale
  to machine precision.
    \textbf{(b)}~The output stalk converges from
  its random starting value to the forward-pass prediction (black dashed).
  This validates \cref{thm:convergence}.}
  \label{fig:convergence}
\end{figure}

Across all tested configurations, spanning different depths, output
activations, and batched inference with multiple simultaneous inputs, the
dynamics converge to the forward pass output within machine precision,
confirming \cref{thm:convergence,thm:final}. The unit determinant property
(\cref{lemma:det1}) holds to numerical precision across all networks tested.

Phase-plane plots of pre-activation stalk pairs
(\cref{ap:ext-convergence}) show trajectories navigating through
multiple activation regions before settling at the forward pass fixed point.
ReLU boundary crossings concentrate in the first few hundred iterations;
after that, activation patterns stabilize and the dynamics proceed as
linear diffusion within a single region. Most crossings are transversal,
but a fraction of stalks exhibit transient Filippov sliding, where opposing
forces from the weight and activation edges temporarily trap the trajectory
near a switching surface until downstream stalks converge and the trapping
condition dissolves. Discord decreases monotonically throughout all observed
sliding episodes, consistent with the energy decrease argument in the proof
of~\cref{thm:final}.

The library also supports the pinning mechanisms from
\cref{subsec:pinning}. Hard-pinning a hidden stalk to a non-forward-pass
value produces a different equilibrium, since the harmonic extension changes
when boundary conditions change. Soft pins with coupling strength~$\gamma$
smoothly interpolate between the free equilibrium and the target, approaching
the hard-pin limit as $\gamma \to \infty$.

\subsection{Sheaf-Based Training}\label{subsec:training-experiments}

We test the joint dynamics~\eqref{eq:joint-dynamics} as a training algorithm
on four synthetic benchmarks: paraboloid and saddle regression
($x_1^2 + x_2^2 - 2/3$ and $x_1^2 - x_2^2 + 0.5\sin(2x_1)$ on
$[-2,2]^2$ and $[0,2]^2$ respectively), circular binary classification
(disk vs.\ annulus), and four-class blob classification (anisotropic
Gaussians). Each task is run at two depths: one hidden layer
($[2, 30, 1]$ for regression, $[2, 25, 1]$ or $[2, 25, 4]$ for
classification) and two hidden layers ($[2, 10, 8, \cdot]$). The sheaf
trainer uses $\alpha = 1.0$, $\beta = 1/n_{\mathrm{train}}$
($n_{\mathrm{train}} = 300$), $dt = 0.005$ (1H) or $0.01$ (2H), with
$10^5$ (1H) or $2\cdot 10^5$ (2H) Euler steps. The SGD baseline uses identical
architectures with He initialization \cite{He2015Delving}. We compare by final test loss.

Both methods converge on all eight configurations, producing qualitatively
correct regression surfaces and decision boundaries.
\Cref{fig:training-results} shows representative results at one hidden
layer. On classification, sheaf training is competitive with SGD, with
both methods reaching comparable test loss and $100\%$ accuracy on the
circular data. On regression, SGD reaches lower loss, with sheaf-to-SGD
test loss ratios of roughly $2\times$ on the paraboloid and $1.3\times$
on the saddle, at 10k SGD epochs against 100k sheaf Euler steps. With a larger SGD budget, the gap widens (see \cref{tab:full-results}).

At two hidden layers, the SGD advantage widens across all tasks.
A targeted depth$\,\times\,dt$ sweep on the paraboloid task clarifies the
mechanism: the total integration time $T = dt \times n_{\mathrm{steps}}$,
not the step size $dt$ alone, governs performance. A fixed-$T$ control
confirms that the continuous-time analysis faithfully describes the
discrete implementation. Deeper networks converge more slowly per unit
$T$, consistent with the stagnation
bound~\eqref{eq:structural-stagnation}: the spectral gap $\lambda_1$ of
$L_{\mathcal{F}_t}[\Omega,\Omega]$ decreases systematically with depth
at initialization, predicting slower equilibration through the bound's
dependence on $\lambda_{\mathrm{eff}}$.


\begin{figure}[ht]
  \centering
  \begin{subfigure}[t]{0.48\textwidth}
    \centering
    \includegraphics[width=\textwidth]{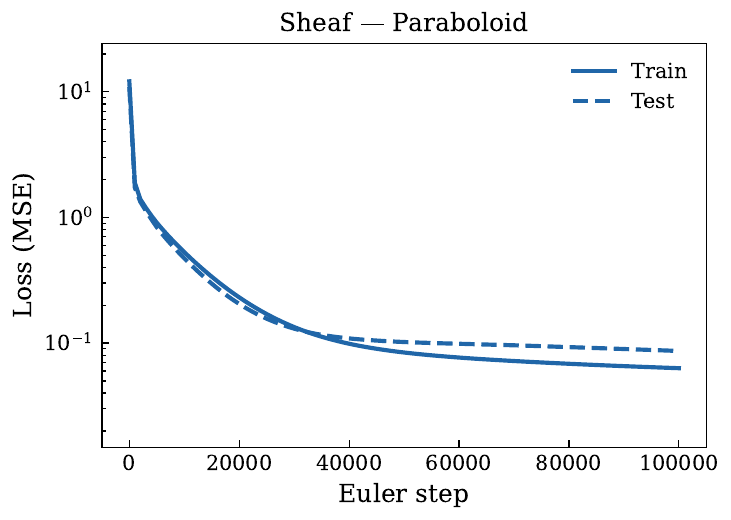}
    \caption{Sheaf training curve.}
    \label{fig:training-results:sheaf-curve}
  \end{subfigure}
  \hfill
  \begin{subfigure}[t]{0.48\textwidth}
    \centering
    \includegraphics[width=\textwidth]{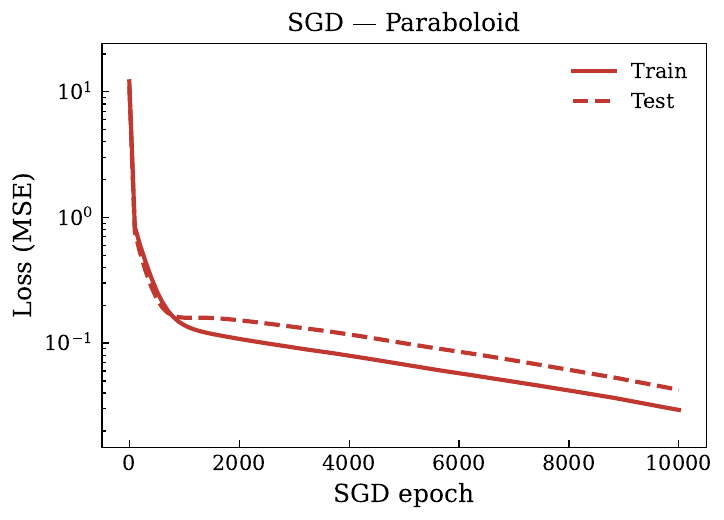}
    \caption{SGD training curve.}
    \label{fig:training-results:sgd-curve}
  \end{subfigure}

  \vspace{0.4em}

  \begin{subfigure}[t]{0.32\textwidth}
    \centering
    \includegraphics[width=\textwidth]{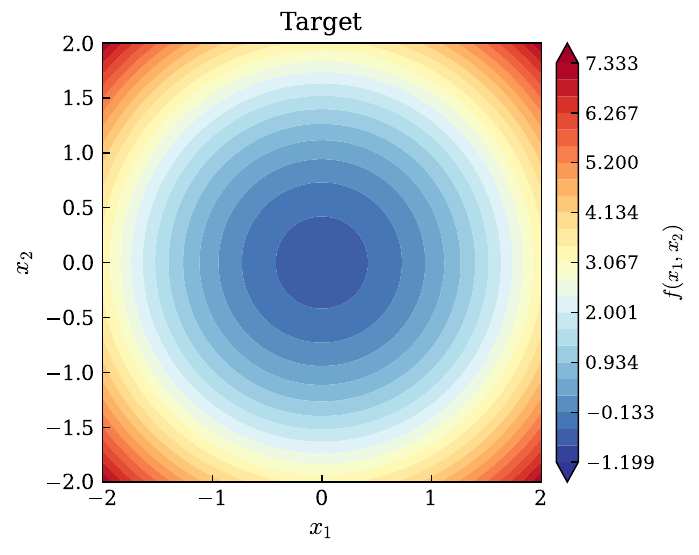}
    \caption{Target.}
    \label{fig:training-results:target}
  \end{subfigure}
  \hfill
  \begin{subfigure}[t]{0.32\textwidth}
    \centering
    \includegraphics[width=\textwidth]{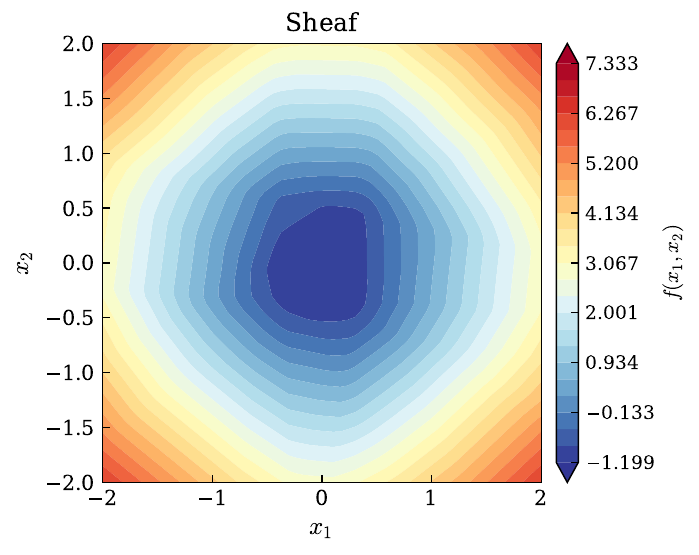}
    \caption{Sheaf.}
    \label{fig:training-results:sheaf-surface}
  \end{subfigure}
  \hfill
  \begin{subfigure}[t]{0.32\textwidth}
    \centering
    \includegraphics[width=\textwidth]{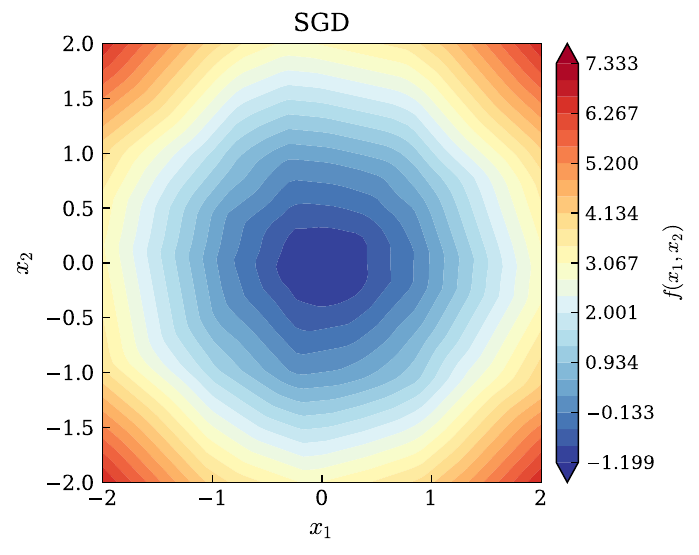}
    \caption{SGD.}
    \label{fig:training-results:sgd-surface}
  \end{subfigure}

  \caption{Sheaf training vs.\ SGD on the paraboloid regression task
    ($[2,30,1]$ architecture, identity output).
    \textbf{(a,\,b)}~Training and test loss on a log scale. Both methods
    converge smoothly; SGD reaches lower final loss.
    \textbf{(c--e)}~Learned surfaces on the $[-2,2]^2$ domain, shown with
    a shared colour scale. Both methods recover the paraboloid shape, with
    SGD producing a slightly smoother approximation.}
  \label{fig:training-results}
\end{figure}

A sweep of $\beta$ over several orders of magnitude on the paraboloid
task confirms the $1/M$ scaling predicted by the stagnation
bound~\eqref{eq:beta-scaling}. \Cref{fig:beta-scaling} plots final test
loss against the rescaled product $\beta M$ (where
$M = n_{\mathrm{train}}$ is the batch size) and reveals three regimes.
Too small a $\beta M$ starves the weight dynamics of information and the
network stagnates; too large a $\beta M$ inflates weight norms and
eventually causes divergence. Repeating the sweep at different batch
sizes confirms that $\beta M$ is the effective control parameter. The
default choice $\beta = 1/n_{\mathrm{train}}$ falls within the optimal
region without per-task tuning.


\begin{figure}[ht]
  \centering
  \begin{subfigure}[t]{0.48\textwidth}
    \centering
    \includegraphics[width=\textwidth]{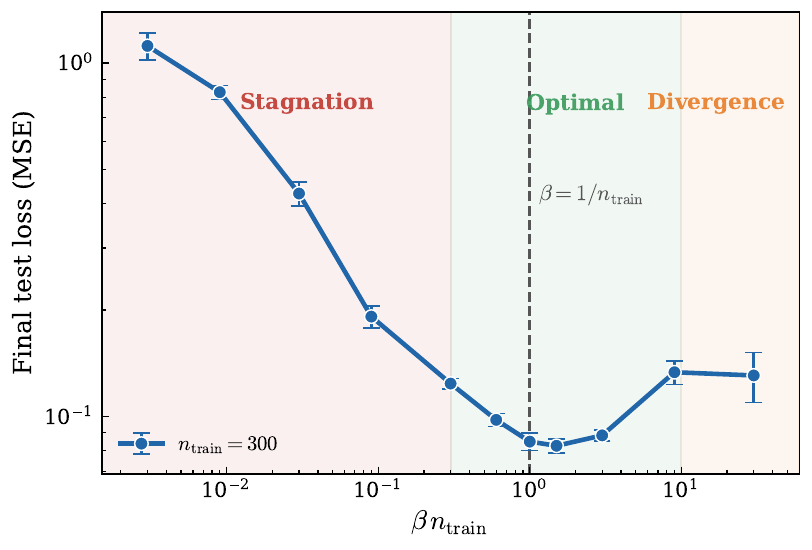}
    \caption{$\beta$ scaling.}
    \label{fig:beta-scaling}
  \end{subfigure}
  \hfill
  \begin{subfigure}[t]{0.48\textwidth}
    \centering
    \includegraphics[width=\textwidth]{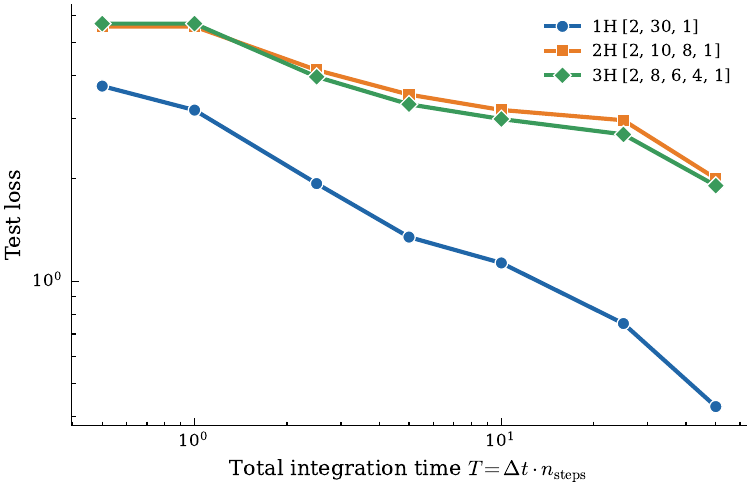}
    \caption{Integration time sweep.}
    \label{fig:depth-T-sweep}
  \end{subfigure}

  \caption{Hyperparameter investigation on the paraboloid task
    \textbf{(a)}~Final test loss
  as a function of $\beta M$ for several values of $\beta$. Experiments
  confirmed the $\beta \sim 1/M$ scaling from the stagnation bound: performance
  depends on the product $\beta M$, not on $\beta$ or $M$ individually. The
  dashed line marks the default $\beta = 1/n_{\mathrm{train}}$.
    \textbf{(b)}~Final test loss as a function of
  total integration time $T = \Delta t \times n_{\mathrm{steps}}$ for
  three architectures of increasing depth. For each depth, loss decreases
  monotonically as $T$ increases, confirming that longer equilibration
  improves the sheaf training output. Deeper networks achieve higher
  loss, requiring more integration time to reach comparable performance.
  This is consistent with the stagnation bound, which predicts slower
  convergence when $\lambda_1$ is smaller.
  \Cref{tab:spectral-gap-depth} confirms that $\lambda_1$ decreases with
  depth.}
  \label{fig:hyper-inv}
\end{figure}

Random initialization works best for regression tasks, while forward-pass
initialization suits classification. The two regimes produce visibly
different discord dynamics. With random initialization, discord starts
high across all edges and generally decreases as stalks and weights
co-equilibrate. With forward-pass initialization, internal edges begin
near a global section with near-zero discord, so the output edge carries
the full discrepancy between predictions and targets; this discord then
propagates inward as the weights adjust to accommodate the labeled data.
In both cases, discord concentrates on the nonlinear activation edges,
which mediate the ReLU constraint between pre-activation and
post-activation stalks. \Cref{fig:discord-residuals-sheaf} shows how per-edge discord residuals
shrink across three training stages, illustrating the progression from
scattered to locally consistent.

\subsection{Sheaf-Based Diagnostics}\label{subsec:diagnostics}

The sheaf embedding provides diagnostic tools that have no direct
feedforward counterpart. We illustrate two: spectral analysis of the
restricted Laplacian, and per-edge discord as a local consistency measure.

\paragraph{Spectral analysis.}
For a trained $[2,30,1]$ network on the paraboloid task, we compute the
spectrum of $L_{\mathcal{F}_t}[\Omega,\Omega]$ across 50 random inputs
at three training stages (early, intermediate, well-trained). The
eigenvalue spectra cluster tightly across inputs at each stage, indicating
that the Laplacian structure depends primarily on the weights rather than
on which particular neurons happen to be active. Eigenvector cosine
similarities exceed $0.98$ for the well-trained model, confirming this
input-independence quantitatively.

The spectral gap $\lambda_1$ stabilizes early in training while the loss
continues to decrease, indicating that the diffusion geometry is set
before fine-grained fitting occurs. Both sheaf-trained and SGD-trained
networks show spectra shifting toward smaller eigenvalues as training
progresses, with sheaf-trained networks consistently producing smaller
eigenvalues at comparable stages.

Across architectures of varying depth and width, the Fiedler eigenvector
consistently concentrates its energy on the output pre-activation
stalk~$z^{(k+1)}$, with concentration strongest for shallow networks and
decreasing as depth increases. Layer width is not the determining factor:
the pattern holds even when hidden layers are narrower than the output.
Additional spectral diagnostics, including condition number evolution and
$\lambda_{\max}$ tracking, appear in \cref{ap:ext-diagnostics}.

\paragraph{Per-edge discord.}
The discord $\|\delta \overline{\omega}\|^2$ decomposes naturally over edges, giving a
per-edge consistency score. Since the sheaf training dynamics do not
enforce exact consistency at every step, the converged state during
training may not perfectly reproduce the feedforward structure. One can
measure this gap directly: for each weight edge, plot the value that
the forward pass would produce,
$\overline{W}^{(\ell)}\overline{a}^{(\ell-1)}$, against the actual
stalk value $z^{(\ell)}$; for each ReLU edge, plot $z^{(\ell)}$ against
$a^{(\ell)}$. In a well-trained model at equilibrium, weight edge
residuals cluster near zero and ReLU edge residuals follow the
characteristic hinge curve, confirming local consistency. Poorly
trained models show scatter in both plots.

Discord localizes along edges in a structured way, governed by the
activation state of each pre-activation coordinate $z_j^{(\ell)}$.
When $z_j^{(\ell)} < 0$, the adjoint of the ReLU restriction map
$R^{z^{(\ell)}}$ has a zero diagonal entry at index $j$, so the
activation edge exerts no force on $z_j^{(\ell)}$. The weight edge
is then the sole force, and $z_j^{(\ell)}$ settles at the predicted
value $(\overline{W}^{(\ell)}\overline{a}^{(\ell-1)})_j$ with zero
weight edge discord. When $z_j^{(\ell)} \geq 0$, both edges pull on
$z_j^{(\ell)}$: the weight edge toward the predicted value and the
activation edge toward $a_j^{(\ell)}$, so at equilibrium
$z_j^{(\ell)}$ sits at the midpoint and both edges carry discord
proportional to the gap between predicted value and $a_j^{(\ell)}$.

Embedding an SGD-trained network as a sheaf, pinning the output stalk to
true labels, and running diffusion to convergence produces a per-edge
discord distribution that reveals how prediction error distributes
backward through the layers. Comparing this distribution against a
sheaf-trained network on the same task yields visibly different discord
profiles, consistent with the two methods navigating different regions of
the optimization landscape. Details appear in
\cref{ap:ext-diagnostics}.


\begin{figure}[ht]
  \centering
  \begin{subfigure}[t]{0.48\textwidth}
    \centering
    \includegraphics[width=\textwidth]{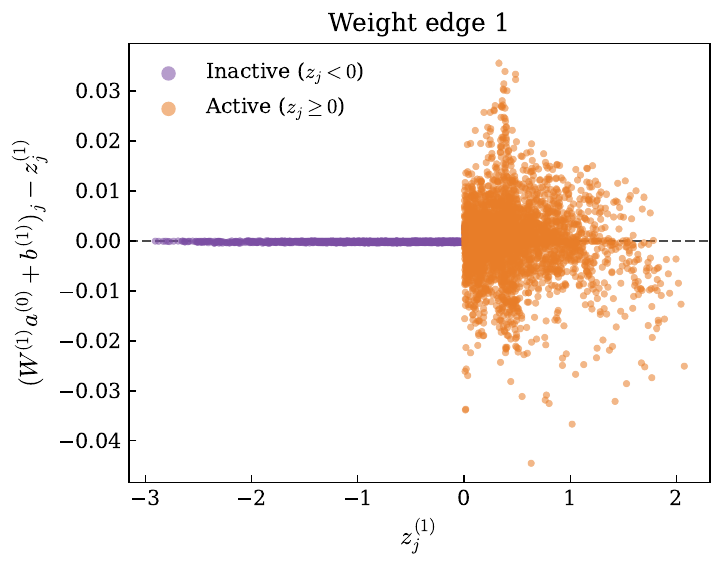}
    \caption{Weight edge discord.}
    \label{fig:discord-1H:weight}
  \end{subfigure}
  \hfill
  \begin{subfigure}[t]{0.48\textwidth}
    \centering
    \includegraphics[width=\textwidth]{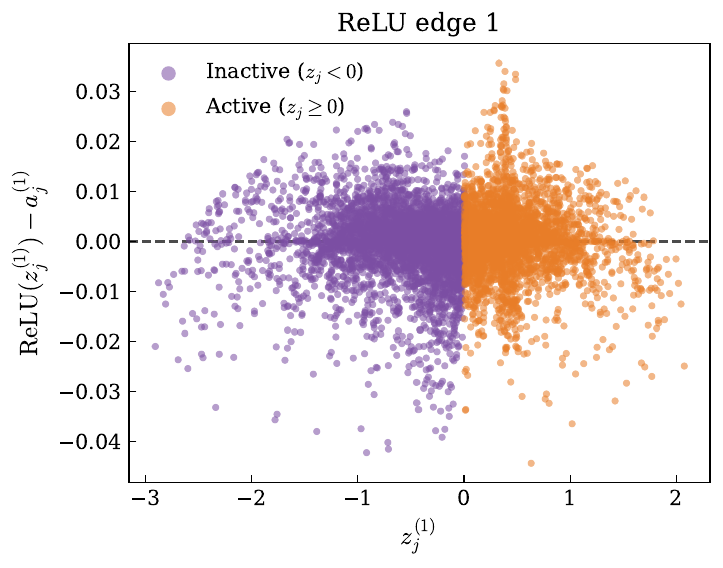}
    \caption{ReLU edge discord.}
    \label{fig:discord-1H:relu}
  \end{subfigure}

  \caption{Per-coordinate edge discord for a well-trained $[2,30,1]$ sheaf
    network on the paraboloid task ($300$ training inputs).
    \textbf{(a)}~Weight edge discord
    $(W^{(1)} a^{(0)} + b^{(1)})_j - z_j^{(1)}$ vs.\ pre-activation
    $z_j^{(1)}$.  Inactive coordinates ($z_j < 0$, purple) lie on the zero
    line: the ReLU edge exerts no competing force, so the weight edge drives
    $z_j$ to its predicted value exactly.  Active coordinates ($z_j \geq 0$,
    orange) show a spread around zero, reflecting the equilibrium compromise
    between weight and activation edge forces.
    \textbf{(b)}~ReLU edge discord $\mathrm{ReLU}(z_j^{(1)}) - a_j^{(1)}$ vs.\ $z_j^{(1)}$. For inactive coordinates ($z_j < 0$, purple), this quantity equals $-a_j^{(1)}$; the nonzero spread reflects the downstream weight edge pulling $a_j^{(1)}$ away from zero to reduce its own
discrepancy.  For active coordinates ($z_j \geq 0$, orange), both
edges compete, producing a symmetric cloud.  The magnitudes are
comparable to the weight-edge residuals, consistent with the
equilibrium force balance described in \cref{subsec:diagnostics}.
}  \label{fig:discord-1H}
\end{figure}



\section{Discussion}\label{sec:discussion}

We constructed a cellular sheaf whose unique harmonic extension
encodes the forward pass of a feedforward ReLU neural network.  The
mathematical core is the unitriangular factorization of the restricted
coboundary (\cref{lemma:det1}): each computational step in the forward
pass contributes a sub-diagonal block to a square matrix with identity
blocks on the diagonal, so the determinant is~$1$, the restricted
Laplacian is positive definite, and the harmonic extension is unique.
Forward substitution in the unitriangular system is the forward pass;
the sheaf heat equation converges exponentially to the same output
despite the state-dependent switching introduced by ReLU.

Several limitations should be stated plainly.
\begin{enumerate}[leftmargin=2em, itemsep=4pt]
  \item \emph{Path-graph dependence.}
    The unitriangular structure of $\delta_\Omega$ depends on a
    dimensional coincidence specific to the path graph: the total
    dimension of the edge stalks equals the total dimension of the free
    vertex coordinates, making $\delta_\Omega$ square.  Skip connections, as in residual architectures~\cite{he2016resnet},
add edges without adding free variables, producing a tall
    $\delta_\Omega$; the restricted Laplacian
    $\delta_\Omega^T\delta_\Omega$ remains positive definite (so
    harmonic extensions still exist and are unique), but the
    unit-determinant identity is lost.  More complex
    architectures -- encoder-decoder structures such as
    U-Net~\cite{ronneberger2015unet}, multi-modal couplings such as
    CLIP~\cite{radford2021clip} -- can be encoded as sheaves on richer
    graphs, and the heat equation distributes information
    bidirectionally on any such graph, but the convergence analysis
    requires checking positive definiteness case by case rather than
    deducing it from a universal algebraic identity.  Architectures
    with genuine cycles (recurrent networks) break the triangular
    ordering entirely.

  \item \emph{Genericity assumption.}
    The convergence theorems assume that the forward-pass equilibrium
    lies in the interior of an activation region
    (\cref{rem:zero-preactivation}).  For trained networks with dead
    neurons ($z_j^{(\ell)} = 0$ at the forward-pass solution), this
    assumption can fail, though the harmonic extension itself is
    unaffected -- only the labeling of the activation pattern is
    ambiguous, not the solution.

  \item \emph{Training performance.}
    Sheaf-based training, while principled, is not yet competitive with
    stochastic gradient descent on most of our benchmarks: at matched iteration counts (\cref{tab:full-results}) test loss ratios range from $0.2\times$ to $60\times$.  The computational cost
    of integrating the heat equation -- $10^5$ Euler steps, where SGD
    already achieves lower loss with $10^4$ epochs-- exceeds that of a backward pass.  The
    framework's present value is conceptual and diagnostic, not
    algorithmic.
\end{enumerate}

Within these limits, the sheaf perspective offers tools that the
feedforward view does not provide.  The per-edge discord decomposition
and the spectral analysis of the restricted Laplacian apply to any
trained network embedded post hoc, not only to sheaf-trained ones.
The Fiedler eigenvector's concentration on the output pre-activation
stalk, the early stabilization of the spectral gap during training,
and the active/inactive decomposition of weight-edge discord are
empirical observations with no feedforward counterpart.  

The partial clamping note (\cref{note:partial-clamping}) gives an
immediate operational mode: given any mixture of observed inputs,
desired outputs, and internal constraints on a pretrained network,
solve for a local-minimizer completion of the remaining
coordinates.  This naturally covers hidden-neuron steering,
missing-feature imputation, label-conditioned reconstruction, and
counterfactual editing of internal representations---all as instances
of harmonic extension with different boundary data on the same sheaf.
Whether the spectral and cohomological invariants of the neural sheaf
can serve as systematic tools for identifying structural bottlenecks
or failure modes in larger networks is a natural question that remains
open.

The locality of sheaf-based training -- each weight update depends only
on adjacent stalks -- connects to distributed optimization.  Gossip
algorithms~\cite{boyd2006gossip} achieve coherence through local
averaging, federated learning~\cite{mcmahan2017federated} partitions
training across clients, and the split consensus approach
of~\cite{camajori2024split} trains different layers through
peer-to-peer communication.  The sheaf framework encodes communication
structure, local objectives, and coordination mechanism in a single
mathematical object.  Whether this structural clarity can compensate
for the computational overhead of diffusion-based training at scale is
an open question.

Three specific open problems stand out.
\begin{enumerate}[leftmargin=2em, itemsep=4pt]
  \item \emph{Convergence of the joint CPWA dynamics.}
    When weights and cochains evolve simultaneously, the switching
    surfaces move with the parameters, and whether trajectories can
    become trapped in a noncritical sliding mode is unresolved.  The
    fixed-weight case (\cref{thm:convergence,thm:final}) and the affine
    joint dynamics~\cite{bosca2026selective} are both settled; the
    coupled nonlinear setting may require tools from CPWA
    theory~\cite{gould2025thesis,cortes2008discontinuous} beyond those
    used here.

  \item \emph{Extension beyond path graphs.}
    For architectures with skip connections or shared parameters, the
    restricted coboundary is no longer square, and the unit-determinant
    identity fails.  Positive definiteness of
    $\delta_\Omega^T\delta_\Omega$ -- equivalently, full column rank of
    $\delta_\Omega$ -- may still hold, but it must be verified for each
    architecture rather than following from a universal algebraic
    argument.  Understanding which graph topologies and restriction-map
    structures guarantee this property is a natural algebraic question.
    For example, Seely~\cite{seely2025sheaf} demonstrates that the sheaf
    framework extends naturally to recurrent predictive coding networks,
    where feedback loops create nonvanishing first cohomology and can
    cause learning to stall; adapting those cohomological diagnostics to
    nonlinear architectures is an open direction.

  \item \emph{Scaling.}
    The current implementation uses forward Euler on networks with at
    most 30 neurons per hidden layer.  Efficient integration exploiting
    the block-tridiagonal Laplacian structure, adaptive step sizes
    respecting ReLU boundary geometry, and parallelization leveraging
    the locality of the updates all need development.  The
    $\beta \sim 1/n_{\mathrm{train}}$ scaling from
    \cref{subsec:timescale} is a first quantitative step, but a full
    understanding of how diffusion-based training interacts with width
    and depth at scale remains to be developed.
\end{enumerate}

In this light, neural networks may be viewed less as layered circuits
than as geometric objects whose behavior is governed by local
consistency relations.  The sheaf framework makes these relations
explicit and places them within a broader topological setting,
suggesting that the mathematics of local-to-global phenomena may have
more to say about learning systems than we currently understand.





\bibliographystyle{abbrvnat}
\bibliography{references}  

\begin{thebibliography}{30}
\providecommand{\natexlab}[1]{#1}
\providecommand{\url}[1]{\texttt{#1}}
\expandafter\ifx\csname urlstyle\endcsname\relax
  \providecommand{\doi}[1]{doi: #1}\else
  \providecommand{\doi}{doi: \begingroup \urlstyle{rm}\Url}\fi

\bibitem[Barbero et~al.(2022)Barbero, Bodnar, S{\'a}ez~de Oc{\'a}riz~Borde, Bronstein, Veli{\v{c}}kovi{\'c}, and Li{\`o}]{barbero2022connection}
F.~Barbero, C.~Bodnar, H.~S{\'a}ez~de Oc{\'a}riz~Borde, M.~Bronstein, P.~Veli{\v{c}}kovi{\'c}, and P.~Li{\`o}.
\newblock Sheaf neural networks with connection laplacians.
\newblock In \emph{Proceedings of Topological, Algebraic, and Geometric Learning Workshops 2022}, volume 196 of \emph{Proceedings of Machine Learning Research}, pages 28--36, 2022.
\newblock URL \url{https://proceedings.mlr.press/v196/barbero22a.html}.

\bibitem[Bodnar et~al.(2022)Bodnar, Giovanni, Chamberlain, Lio, and Bronstein]{bodnar2022neural}
C.~Bodnar, F.~D. Giovanni, B.~P. Chamberlain, P.~Lio, and M.~M. Bronstein.
\newblock Neural sheaf diffusion: A topological perspective on heterophily and oversmoothing in {GNN}s.
\newblock In A.~H. Oh, A.~Agarwal, D.~Belgrave, and K.~Cho, editors, \emph{Advances in Neural Information Processing Systems}, 2022.
\newblock URL \url{https://openreview.net/forum?id=vbPsD-BhOZ}.

\bibitem[Bosca and Ghrist(2026)]{bosca2026selective}
V.~Bosca and R.~Ghrist.
\newblock Selective adaptation of beliefs and communication on cellular sheaves, 2026.
\newblock URL \url{https://arxiv.org/abs/2601.22431}.

\bibitem[Bosca et~al.(2025)Bosca, Rask, Tanweer, Tawfeek, and Stone]{bosca2025topological}
V.~Bosca, T.~Rask, S.~Tanweer, A.~R. Tawfeek, and B.~Stone.
\newblock Topological signatures of re{LU} neural network activation patterns.
\newblock In \emph{Topology, Algebra, and Geometry in Data Science}, 2025.
\newblock URL \url{https://openreview.net/forum?id=Q88w76j2Dd}.

\bibitem[Boyd et~al.(2006)Boyd, Ghosh, Prabhakar, and Shah]{boyd2006gossip}
S.~Boyd, A.~Ghosh, B.~Prabhakar, and D.~Shah.
\newblock Randomized gossip algorithms.
\newblock \emph{IEEE Transactions on Information Theory}, 52\penalty0 (6):\penalty0 2508--2530, 2006.
\newblock \doi{10.1109/TIT.2006.874516}.

\bibitem[Camajori~Tedeschini et~al.(2024)Camajori~Tedeschini, Brambilla, and Nicoli]{camajori2024split}
B.~Camajori~Tedeschini, M.~Brambilla, and M.~Nicoli.
\newblock Split consensus federated learning: An approach for distributed training and inference.
\newblock \emph{IEEE Access}, 12:\penalty0 119535–119549, 01 2024.
\newblock \doi{10.1109/ACCESS.2024.3446577}.

\bibitem[Caralt et~al.(2024)Caralt, Gil, Duta, Liò, and Cot]{hernandez2024joint}
F.~H. Caralt, G.~B. Gil, I.~Duta, P.~Liò, and E.~A. Cot.
\newblock Joint diffusion processes as an inductive bias in sheaf neural networks, 2024.
\newblock URL \url{https://arxiv.org/abs/2407.20597}.

\bibitem[Clarke(1983)]{clarke1983optimization}
F.~H. Clarke.
\newblock \emph{Optimization and Nonsmooth Analysis}.
\newblock Canadian Mathematical Society Series of Monographs and Advanced Texts. John Wiley \& Sons, New York, 1983.
\newblock ISBN 0-471-87504-X.

\bibitem[Cort{\'e}s(2008)]{cortes2008discontinuous}
J.~Cort{\'e}s.
\newblock Discontinuous dynamical systems: a tutorial on solutions, nonsmooth analysis, and stability.
\newblock \emph{IEEE Control Systems Magazine}, 28\penalty0 (3):\penalty0 36--73, 2008.
\newblock \doi{10.1109/MCS.2008.919306}.

\bibitem[Curry(2014)]{curry2014sheaves}
J.~M. Curry.
\newblock \emph{Sheaves, Cosheaves and Applications}.
\newblock PhD thesis, University of Pennsylvania, Philadelphia, PA, 2014.
\newblock URL \url{https://repository.upenn.edu/bitstreams/53914c60-81a8-4305-b8b6-693f2f42ea18/download}.
\newblock Ph.D. thesis; arXiv:1303.3255.

\bibitem[Filippov(1988)]{filippov1988differential}
A.~F. Filippov.
\newblock \emph{Differential Equations with Discontinuous Righthand Sides}.
\newblock Kluwer Academic Publishers, Dordrecht, 1988.

\bibitem[Gould(2025)]{gould2025thesis}
J.~J. Gould.
\newblock \emph{Cellular Sheaves of Hilbert Spaces}.
\newblock Ph.d. thesis, University of Pennsylvania, Philadelphia, PA, 2025.
\newblock URL \url{https://repository.upenn.edu/entities/publication/a40a44d9-f4e2-4c56-abcb-53936caeffad}.
\newblock Department of Mathematics.

\bibitem[Hansen and Gebhart(2020)]{hansen2020sheaf}
J.~Hansen and T.~Gebhart.
\newblock Sheaf neural networks.
\newblock \emph{CoRR}, abs/2012.06333, 2020.
\newblock URL \url{https://arxiv.org/abs/2012.06333}.

\bibitem[Hansen and Ghrist(2019)]{hansen2019toward}
J.~Hansen and R.~Ghrist.
\newblock Toward a spectral theory of cellular sheaves.
\newblock \emph{Journal of Applied and Computational Topology}, 3\penalty0 (4):\penalty0 315--358, 2019.
\newblock ISSN 2367-1734.
\newblock \doi{10.1007/s41468-019-00038-7}.
\newblock URL \url{https://doi.org/10.1007/s41468-019-00038-7}.

\bibitem[Hansen and Ghrist(2021)]{hansen2021opinion}
J.~Hansen and R.~Ghrist.
\newblock Opinion dynamics on discourse sheaves.
\newblock \emph{SIAM Journal on Applied Mathematics}, 81\penalty0 (5):\penalty0 2033--2060, 2021.
\newblock \doi{10.1137/20M1341088}.
\newblock URL \url{https://doi.org/10.1137/20M1341088}.

\bibitem[He et~al.(2015)He, Zhang, Ren, and Sun]{He2015Delving}
K.~He, X.~Zhang, S.~Ren, and J.~Sun.
\newblock Delving deep into rectifiers: Surpassing human-level performance on imagenet classification.
\newblock In \emph{Proceedings of the IEEE International Conference on Computer Vision (ICCV)}, pages 1026--1034, 2015.
\newblock \doi{10.1109/ICCV.2015.123}.

\bibitem[He et~al.(2016)He, Zhang, Ren, and Sun]{he2016resnet}
K.~He, X.~Zhang, S.~Ren, and J.~Sun.
\newblock Deep residual learning for image recognition.
\newblock In \emph{Proceedings of the IEEE Conference on Computer Vision and Pattern Recognition}, pages 770--778, 2016.
\newblock \doi{10.1109/CVPR.2016.90}.

\bibitem[Liu et~al.(2023)Liu, Cole, Peterson, and Kirby]{liu2023relu}
Y.~Liu, C.~M. Cole, C.~Peterson, and M.~Kirby.
\newblock Relu neural networks, polyhedral decompositions, and persistent homology, 2023.
\newblock URL \url{https://arxiv.org/abs/2306.17418}.

\bibitem[McMahan et~al.(2017)McMahan, Moore, Ramage, Hampson, and Arcas]{mcmahan2017federated}
B.~McMahan, E.~Moore, D.~Ramage, S.~Hampson, and B.~A.~y. Arcas.
\newblock {Communication-Efficient Learning of Deep Networks from Decentralized Data}.
\newblock In A.~Singh and J.~Zhu, editors, \emph{Proceedings of the 20th International Conference on Artificial Intelligence and Statistics}, volume~54 of \emph{Proceedings of Machine Learning Research}, pages 1273--1282. PMLR, 20--22 Apr 2017.
\newblock URL \url{https://proceedings.mlr.press/v54/mcmahan17a.html}.

\bibitem[Millidge et~al.(2022)Millidge, Tschantz, and Buckley]{millidge2022predictive}
B.~Millidge, A.~Tschantz, and C.~L. Buckley.
\newblock Predictive coding approximates backprop along arbitrary computation graphs.
\newblock \emph{Neural Computation}, 34\penalty0 (6):\penalty0 1329--1368, 2022.
\newblock \doi{10.1162/neco_a_01497}.

\bibitem[Mont{\'u}far et~al.(2014)Mont{\'u}far, Pascanu, Cho, and Bengio]{montufar2014linearregions}
G.~F. Mont{\'u}far, R.~Pascanu, K.~Cho, and Y.~Bengio.
\newblock On the number of linear regions of deep neural networks.
\newblock In \emph{Advances in Neural Information Processing Systems 27}, 2014.
\newblock URL \url{https://proceedings.neurips.cc/paper/2014/hash/fa6f2a469cc4d61a92d96e74617c3d2a-Abstract.html}.

\bibitem[Parada-Mayorga et~al.(2020)Parada-Mayorga, Riess, Ribeiro, and Ghrist]{parada2020quiver}
A.~Parada-Mayorga, H.~Riess, A.~Ribeiro, and R.~Ghrist.
\newblock Quiver signal processing (qsp), 2020.
\newblock URL \url{https://arxiv.org/abs/2010.11525}.

\bibitem[Radford et~al.(2021)Radford, Kim, Hallacy, Ramesh, Goh, Agarwal, Sastry, Askell, Mishkin, Clark, Krueger, and Sutskever]{radford2021clip}
A.~Radford, J.~W. Kim, C.~Hallacy, A.~Ramesh, G.~Goh, S.~Agarwal, G.~Sastry, A.~Askell, P.~Mishkin, J.~Clark, G.~Krueger, and I.~Sutskever.
\newblock Learning transferable visual models from natural language supervision, 2021.
\newblock URL \url{https://arxiv.org/abs/2103.00020}.

\bibitem[Ronneberger et~al.(2015)Ronneberger, Fischer, and Brox]{ronneberger2015unet}
O.~Ronneberger, P.~Fischer, and T.~Brox.
\newblock U-net: Convolutional networks for biomedical image segmentation.
\newblock In N.~Navab, J.~Hornegger, W.~M. Wells, and A.~F. Frangi, editors, \emph{Medical Image Computing and Computer-Assisted Intervention -- MICCAI 2015}, pages 234--241, Cham, 2015. Springer International Publishing.
\newblock ISBN 978-3-319-24574-4.

\bibitem[Scellier and Bengio(2017)]{scellier2017equilibrium}
B.~Scellier and Y.~Bengio.
\newblock Equilibrium propagation: Bridging the gap between energy-based models and backpropagation.
\newblock \emph{Frontiers in Computational Neuroscience}, 11:\penalty0 24, 2017.
\newblock \doi{10.3389/fncom.2017.00024}.

\bibitem[Seely(2025)]{seely2025sheaf}
J.~Seely.
\newblock Sheaf cohomology of linear predictive coding networks, 2025.
\newblock URL \url{https://arxiv.org/abs/2511.11092}.

\bibitem[Seigal et~al.(2023)Seigal, Harrington, and Nanda]{seigal2023principal}
A.~Seigal, H.~A. Harrington, and V.~Nanda.
\newblock Principal components along quiver representations.
\newblock \emph{Foundations of Computational Mathematics}, 23\penalty0 (4):\penalty0 1129--1165, 2023.
\newblock ISSN 1615-3383.
\newblock \doi{10.1007/s10208-022-09563-x}.
\newblock URL \url{https://doi.org/10.1007/s10208-022-09563-x}.

\bibitem[Shevitz and Paden(1994)]{shevitz1994lyapunov}
D.~Shevitz and B.~Paden.
\newblock Lyapunov stability theory of nonsmooth systems.
\newblock \emph{IEEE Transactions on Automatic Control}, 39\penalty0 (9):\penalty0 1910--1914, 1994.
\newblock \doi{10.1109/9.317122}.

\bibitem[Srivastava et~al.(2014)Srivastava, Hinton, Krizhevsky, Sutskever, and Salakhutdinov]{srivastava2014dropout}
N.~Srivastava, G.~Hinton, A.~Krizhevsky, I.~Sutskever, and R.~Salakhutdinov.
\newblock Dropout: a simple way to prevent neural networks from overfitting.
\newblock \emph{Journal of Machine Learning Research}, 15\penalty0 (56):\penalty0 1929--1958, 2014.

\bibitem[Whittington and Bogacz(2017)]{whittington2017predictive}
J.~C.~R. Whittington and R.~Bogacz.
\newblock An approximation of the error backpropagation algorithm in a predictive coding network with local hebbian synaptic plasticity.
\newblock \emph{Neural Computation}, 29\penalty0 (5):\penalty0 1229--1262, 2017.
\newblock \doi{10.1162/NECO_a_00949}.

\end{thebibliography}


\appendix

\section{Proofs}\label{ap:proofs}

This appendix collects the explicit block structure of the restricted
Laplacian $L_{\mathcal{F}_t}[\Omega,\Omega]$ and the convergence proof for networks with nonlinear
final activations \cref{thm:final}.

\subsection{Block structure of the restricted Laplacian}
\label{ap:block-structure}

Recall the restricted dynamics~\eqref{eq:neural-restricted}. We derive the
block structure of $L_{\mathcal{F}_t}[\Omega,\Omega]$ from the
construction in \cref{subsec:networks-to-sheaves}. For the identity output
case ($\phi = I$), the output edge enforces $\hat{y} = z^{(k+1)}$ at
equilibrium, so we work with the free coordinates
\begin{equation}\label{eq:free-coords}
  \omega
    = \bigl(z^{(1)},\; a^{(1)},\; z^{(2)},\; a^{(2)},\;
      \ldots,\; a^{(k)},\; z^{(k+1)}\bigr).
\end{equation}
The fixed boundary data consists of the input
$\overline{\mathbf{x}} \in \R^{n_0 + n_1}$ at $v_x$ together with the ones
blocks $\mathbf{1}_{n_{\ell+1}}$ held constant within each extended
activation stalk at $v_{a^{(\ell)}}$ for $\ell = 1, \ldots, k$.

Each weight edge $e_{z^{(\ell)}}$ couples
$\overline{a}^{(\ell-1)} = (a^{(\ell-1)T},\, \mathbf{1}_{n_\ell}^T)^T$
to $z^{(\ell)}$ through the extended weight matrix
$\overline{W}^{(\ell)} = (W^{(\ell)} \mid B^{(\ell)})$. Since the ones
block is fixed, only $W^{(\ell)}$ appears in the free-free coupling
$L[\Omega,\Omega]$; the bias contribution
$B^{(\ell)}\mathbf{1}_{n_\ell} = b^{(\ell)}$ enters through $L[\Omega,U]u$.

\medskip
\noindent\textbf{Single hidden layer.}
For $k = 1$, the restricted Laplacian on the free coordinates
$\omega = (z^{(1)}, a^{(1)}, z^{(2)})$ is
\begin{equation}\label{eq:lap-1layer}
  L_{\mathcal{F}_t}[\Omega,\Omega]
  = \begin{pmatrix}
      I_{n_1} + R^{z^{(1)}}
        & -R^{z^{(1)}}
        &  \\[4pt]
      -R^{z^{(1)}}
        & I_{n_1} + {W^{(2)}}^T W^{(2)}
        & -{W^{(2)}}^T \\[4pt]
      
        & -W^{(2)}
        & I_{n_2} 
    \end{pmatrix}.
\end{equation}
The corresponding coordinate dynamics are
\begin{equation}\label{eq:coord-dynamics-1layer}
  \frac{d}{dt}\begin{pmatrix}
    z^{(1)} \\[3pt] a^{(1)} \\[3pt] z^{(2)} 
  \end{pmatrix}
  = -\alpha
  \begin{pmatrix}
    z^{(1)} - \overline{W}^{(1)}\overline{\mathbf{x}}
      + R^{z^{(1)}}(z^{(1)} - a^{(1)}) \\[3pt]
    a^{(1)} - R^{z^{(1)}} z^{(1)}
      + {W^{(2)}}^T(\overline{W}^{(2)}\overline{a}^{(1)} - z^{(2)}) \\[3pt]
    z^{(2)} - \overline{W}^{(2)}\overline{a}^{(1)} 
  \end{pmatrix},
\end{equation}
where the terms $-\overline{W}^{(1)}\overline{\mathbf{x}}$,
${W^{(2)}}^T b^{(2)}$, and $-b^{(2)}$ are the boundary contributions from
$L_{\mathcal{F}_t}[\Omega,U]u$.

\medskip
\noindent\textbf{General $k$-layer network.}
Group the free coordinates into layer blocks
$(z^{(\ell)}, a^{(\ell)})$ for $\ell = 1, \ldots, k$, with a final block
$z^{(k+1)}$. Define the block components for
$\ell \in \{1, \ldots, k\}$:
\begin{equation}\label{eq:block-defs}
\begin{aligned}
  A_\ell &\coloneqq \begin{pmatrix}
    I_{n_\ell} + R^{z^{(\ell)}}
      & -R^{z^{(\ell)}} \\[3pt]
    -R^{z^{(\ell)}}
      & I_{n_\ell} + {W^{(\ell+1)}}^T W^{(\ell+1)}
  \end{pmatrix}
  \in \R^{2n_\ell \times 2n_\ell}, \\[6pt]
  C_\ell &\coloneqq \begin{pmatrix}
    0 & -W^{(\ell)} \\[3pt]
    0 & 0
  \end{pmatrix}, \qquad
  D_\ell \coloneqq \begin{pmatrix}
    0 & -W^{(\ell)}
  \end{pmatrix}.
\end{aligned}
\end{equation}
The diagonal block $A_\ell$ captures the activation edge (coupling
$z^{(\ell)}$ and $a^{(\ell)}$) and the weight edge to the next layer
(contributing ${W^{(\ell+1)}}^T W^{(\ell+1)}$ to the $a^{(\ell)}$ diagonal).
The off-diagonal blocks $C_\ell$ and $D_\ell$ couple consecutive layers
through weight edges. The restricted Laplacian has the block-tridiagonal form
\begin{equation}\label{eq:lap-klayer}
  L_{\mathcal{F}_t}[\Omega,\Omega]
  = \begin{pmatrix}
      A_1       & C_2^T       &             &             &                 \\
      C_2       & A_2         & C_3^T       &             &                 \\
                & C_3         & \ddots      & \ddots      &                 \\
                &             & \ddots      & A_k         & D_{k+1}^T       \\
                &             &             & D_{k+1}     & I_{n_{k+1}} 
    \end{pmatrix}.
\end{equation}

\subsection{Convergence with final activation
  (\texorpdfstring{\cref{thm:final}}{Theorem~\ref*{thm:final}})}
\label{ap:convergence-final}

We restate the theorem for convenience.

\noindent\textbf{\cref{thm:final}}
(\textit{Convergence with final activation}).
\textit{Consider a $k$-hidden-layer ReLU network with $C^1$ final activation
$\phi\colon \mathbb{R}^{n_{k+1}} \to \mathbb{R}^{n_{k+1}}$ that is either
bounded or has at most linear growth.  Assume the forward-pass equilibrium
$\omega^*$ lies in the interior of an activation region (a generic
condition; see \cref{rem:zero-preactivation}). For any initial condition
$(\omega_0, \hat{y}_0)$, the solution of the restricted
dynamics~\eqref{eq:neural-restricted} augmented
by~\eqref{eq:final-dynamics} converges to the unique fixed point
$(\omega^*, \phi(z^{(k+1)*}))$ encoding the standard neural network output.}

\begin{proof}
Recall that $L_{\mathcal{F}_i}[\Omega,\Omega]$ is positive definite for every
activation pattern by \cref{lemma:det1}. Write
$x = (\omega, \hat{y})$, where $\omega$ collects all free coordinates
up to the final preactivation $z^{(k+1)}$ (included), and $\hat{y}$ is the output
vertex state. Define the energy
\begin{equation}\label{eq:lyapunov-final}
  E(\omega, \hat{y})
    \;=\; \tfrac{1}{2}\,\omega^T
      L_{\mathcal{F}_i}[\Omega,\Omega]\,\omega
      + \omega^T L_{\mathcal{F}_i}[\Omega,U]\,u
    \;+\; \tfrac{1}{2}\bigl\|\phi(z^{(k+1)}) - \hat{y}\bigr\|^2.
\end{equation}
Each term is continuous and piecewise $C^1$, so $E$ is continuous and locally
Lipschitz.  The positive-definite quadratic term
$\frac{1}{2}\omega^T L_{\mathcal{F}_i}[\Omega,\Omega]\,\omega$ dominates at
large $\|\omega\|$, and the growth hypothesis on~$\phi$ ensures the output
potential does not destroy this dominance, so $E$ is coercive and sublevel
sets are bounded.

The right-hand side
of~\eqref{eq:neural-restricted}--\eqref{eq:final-dynamics} equals
$-\alpha\,\nabla E$ in the interior of every ReLU region. At switching
surfaces, the Filippov set equals the convex hull of the limiting gradient
vectors, so the dynamics satisfy
$\frac{d}{dt} x(t) \in -\alpha\,\partial_C E(x(t))$,
where $\partial_C E$ denotes the Clarke
subdifferential~\cite{clarke1983optimization}. Existence of absolutely
continuous Filippov solutions follows from standard
theory~\cite{filippov1988differential,cortes2008discontinuous}, since the
right-hand side is measurable and locally bounded.

In the interior of any activation region, the chain rule gives
$\frac{d}{dt}E(x(t)) = -\alpha\,\|\nabla E(x(t))\|^2 \leq 0$.
At a switching surface $\{z_j^{(\ell)} = 0\}$, the activation edge
contributes $\frac{1}{2}(R_{jj}^{z^{(\ell)}}\, z_j^{(\ell)} -
a_j^{(\ell)})^2$ to the energy. Since $\operatorname{ReLU}(0) = 0$
regardless of convention, this evaluates to
$\frac{1}{2}(a_j^{(\ell)})^2$ from both sides, so $E^+ = E^-$ on the
surface. Both $E^+$ and $E^-$ are $C^1$ functions that coincide on
$\{z_j^{(\ell)} = 0\}$, so differentiating along any tangent direction $\tau$
gives $\nabla_\tau E^+ = \nabla_\tau E^-$; only the normal derivatives
differ. Any Filippov sliding velocity $v_s$ is tangent to the surface,
and its tangential component equals $-\alpha\,\nabla_\tau E$ (since
both one-sided velocities $v^\pm = -\alpha\,\nabla E^\pm$ share this
tangential part). Hence $v_s = -\alpha\,\nabla_\tau E$ and
\[
  \frac{d}{dt}E(x(t))
    = \bigl\langle \nabla_\tau E,\, v_s \bigr\rangle
    = -\alpha\,\bigl\|\nabla_\tau E\bigr\|^2
    \leq 0.
\]
Combining both cases, $E \circ x$ is nonincreasing. Coercivity then implies
that trajectories remain bounded, so the $\omega$-limit set of any solution
is nonempty, compact, and invariant. By LaSalle's invariance
principle~\cite{filippov1988differential,shevitz1994lyapunov}, every
trajectory converges to the largest invariant subset of
$\{x \colon 0 \in \partial_C E(x)\}$.

In the interior of any activation region, $\nabla E = 0$ gives
$\hat{y} = \phi(z^{(k+1)})$ and
$L_{\mathcal{F}_i}[\Omega,\Omega]\,\omega
+ L_{\mathcal{F}_i}[\Omega,U]\,u = 0$.
Positive definiteness of the restricted Laplacian and self-consistency
identify $\omega = \omega^*$ as the unique interior critical point, as in
\cref{thm:convergence}.

It remains to exclude Clarke critical points on switching surfaces. Consider
the surface $\{z_j^{(\ell)} = 0\}$ and suppose $\nabla_\tau E = 0$ there.
The stationarity equations cascade from the output: $\partial E/\partial
\hat{y} = 0$ gives $\hat{y} = \phi(z^{(k+1)})$, then $\partial
E/\partial z^{(k+1)} = 0$ forces the weight-edge discord at layer $k+1$ to
vanish, and propagating layer by layer, the weight-edge discord vanishes at
every layer above~$\ell$. The stationarity equation for $a_j^{(\ell)}$ then
reduces to $a_j^{(\ell)} = 0$ (the activation edge is the only remaining
contribution, since $z_j^{(\ell)} = 0$ and the discord from layer $\ell+1$
has been eliminated). With $a_j^{(\ell)} = 0$, the activation edge
contributes $R_{jj}^{z^{(\ell)}}(R_{jj}^{z^{(\ell)}} \cdot 0 - 0) = 0$ to both
one-sided normal derivatives, so $\partial_{z_j} E^+ = \partial_{z_j} E^- =
-t_j$, where $t_j = (\overline{W}^{(\ell)}\overline{a}^{(\ell-1)})_j$. Since both one-sided normal derivatives equal $-t_j$, the Clarke condition
requires $t_j = 0$. If $t_j = 0$, the cascade extends through all layers:
the discord vanishes at every edge, making the cochain a global section of
the sheaf. Positive definiteness of the restricted Laplacian implies the
unique global section consistent with the pinned input is $\omega^*$. But
$z_j^{(\ell)*} \neq 0$ by general position, contradicting
$z_j^{(\ell)} = 0$.

Since $\omega^*$ is the unique element of the LaSalle invariant set,
all trajectories converge to $(\omega^*, \phi(z^{(k+1)*}))$.
\end{proof}


\section{Supplementary Diagrams and Formulas}\label{ap:diagrams}

This appendix develops three topics deferred from \cref{sssec:joint-dynamics,sssec:loss-functions,sssec:regularization}: the parameter sheaf that governs weight dynamics, the sheaf-theoretic realization of regularization, and the explicit edge potential formulas for losses in the discrepancy framework.

\subsection{The parameter sheaf and coupled dynamics}
\label{ap:structure-sheaf}

The joint dynamics~\eqref{eq:joint-dynamics} describe coupled evolution of the 0-cochain $\overline{\omega}$ and the coboundary operator $\delta$. The cochain equation is sheaf diffusion on the \emph{state sheaf} $\mathcal{F}^{\Omega}$ with restriction maps determined by the current weights. The weight equation has an analogous interpretation: it is sheaf diffusion on an auxiliary \emph{parameter sheaf} $\mathcal{H}^{W}$, introduced in~\cite{bosca2026selective} for opinion dynamics and independently by Gould~\cite{gould2025thesis} under the name \emph{restriction map diffusion}. The same duality is implicit in the ``dual diffusion'' formulation of Hernandez Caralt et al.~\cite[Eq.~(8), Fig.~2]{hernandez2024joint}, who observe that the learning-to-lie ODE can be rewritten as sheaf diffusion with features playing the role of restriction maps.

We specialize the construction of~\cite[Def.~4.5]{bosca2026selective} to the neural setting. Fix a 0-cochain $\overline{\omega}$ on the neural sheaf $\mathcal{F}$. The parameter sheaf $\mathcal{H}^{W}$ is defined on the same graph as $\mathcal{F}$, with the following data.

The vertex stalk at each vertex $v$ is the direct sum of spaces of candidate restriction maps from $v$:
\begin{equation}\label{eq:structure-stalk}
  \mathcal{H}^{W}(v)
    = \bigoplus_{e \,:\, v \trianglelefteqslant e}
      \mathrm{Hom}\bigl(\mathcal{F}(v),\, \mathcal{F}(e)\bigr).
\end{equation}
A 0-cochain $\rho \in C^0(\mathcal{H}^{W})$ assigns to each vertex a tuple of matrices, one per incident edge. In the neural network, only the weight matrices $\overline{W}^{(\ell)}$ are trainable; the remaining restriction maps (identity, ReLU diagonal, projection) are fixed. The trainable-map projection $\Pi_W$ restricts attention to the weight components.

The edge stalk at each edge $e$ coincides with the edge stalk of $\mathcal{F}$ on weight edges, and it is trivial on activation and output edges. The restriction maps of $\mathcal{H}^{W}$ are \emph{evaluation maps} at the current cochain: for each incidence $v \trianglelefteqslant e$,
\begin{equation}\label{eq:structure-restriction}
  (\mathcal{H}^{W})_{v \trianglelefteqslant e}(\rho_v)
    = \rho_{v \trianglelefteqslant e}(\overline{\omega}_v),
\end{equation}
where $\rho_{v \trianglelefteqslant e}$ is the $e$-component of $\rho_v$ and $\overline{\omega}_v$ is the current extended activation at vertex $v$. The coboundary of $\mathcal{H}^{W}$ measures \emph{structural discrepancy}: $(\delta^{\mathcal{H}} \rho)_e = 0$ precisely when the candidate maps encoded in $\rho$ make the fixed cochain $\overline{\omega}$ locally consistent on edge $e$. Global sections of $\mathcal{H}^{W}$ are sheaf structures under which $\overline{\omega}$ is a global section of $\mathcal{F}$.

The Laplacian of $\mathcal{H}^{W}$ governs weight updates. Diffusion on $\mathcal{H}^{W}$, projected onto the trainable components via $\Pi_W$, recovers the second line of~\eqref{eq:joint-dynamics}:
\begin{equation}\label{eq:structure-diffusion}
  \frac{d\delta}{dt}
    = -\beta\, \Pi_W\bigl(L_{\mathcal{H}^{W}}\, \rho\bigr)
    = -\beta\, \Pi_W\bigl(\delta\, \overline{\omega}\, \overline{\omega}^T\bigr).
\end{equation}
The two sheaves are coupled: $\mathcal{F}^{\Omega}$ depends on the current weights through its restriction maps, while $\mathcal{H}^{W}$ depends on the current activations through its evaluation maps. Training is simultaneous diffusion on both sheaves, each using the other's current state to define its own Laplacian. \Cref{fig:coupled-sheaves} illustrates this coupling for a 1-hidden layer network.

\begin{figure}[htbp]
\centering

\begin{tikzpicture}[
    vertex/.style={circle, fill=white, draw=black, thick, minimum size=5pt, inner sep=0pt},
    edge stalk h/.style={rectangle, rounded corners=1pt, fill=white, draw=black, minimum width=16pt, minimum height=4pt, inner sep=0pt},
    edge stalk v/.style={rectangle, rounded corners=1pt, fill=white, draw=black, minimum width=4pt, minimum height=16pt, inner sep=0pt},
    arrow/.style={-{Stealth[length=4pt]}, thick},
    green arrow/.style={-{Stealth[length=4pt]}, thick, color=DarkGreen},
    link to/.style={-{Stealth[length=3pt]}, color=DarkBlue, thin, densely dashed},
    link plain/.style={color=DarkBlue, thin, densely dashed},
    every node/.style={font=\small},
]

\begin{scope}[shift={(0.7,1.2)}]

    \node[vertex, draw=DarkRed] (Tvx) at (0, 2.8) {};
    \node[vertex, draw=DarkGreen] (Tvz1) at (0, 0) {};
    \node[vertex, draw=DarkOrange1] (Tva1) at (2.8, 0) {};
    \node[vertex, draw=DarkGreen] (Tvz2) at (5.4, 0) {};
    \node[vertex, draw=DarkRed] (Tvy) at (5.4, 2.8) {};

    \node[above left] at (Tvx) {$\overline{\mathbf{x}}$};
    \node[below left, color=DarkGreen] (Tlz1) at (Tvz1) {$z^{\scriptscriptstyle(1)}$};
    \node[below=2pt, color=DarkGreen] (Tla1) at (Tva1) {$\overline{a}^{\scriptscriptstyle(1)}$};
    \node[below right, color=DarkGreen] (Tlz2) at (Tvz2) {$z^{\scriptscriptstyle(2)}$};
    \node[above right] at (Tvy) {$\mathbf{y}$};

    \draw[thick] (Tvx) -- (Tvz1);
    \draw[thick] (Tvz1) -- (Tva1);
    \draw[thick] (Tva1) -- (Tvz2);
    \draw[thick] (Tvz2) -- (Tvy);

    \node[edge stalk v] (Tes1) at (0, 1.4) {};
    \node[edge stalk h] (Tes2) at (1.4, 0) {};
    \node[edge stalk h] (Tes3) at (4.1, 0) {};
    \node[edge stalk v] (Tes4) at (5.4, 1.4) {};

    \draw[green arrow] (-0.3, 2.65) -- (-0.3, 1.55);
    \draw[arrow] (-0.3, 0.15) -- (-0.3, 1.25);
    \node[left, font=\scriptsize, color=DarkGreen] (TlW1) at (-0.35, 2.1) {$\overline{W}^{\scriptscriptstyle(1)}$};
    \node[left, font=\scriptsize] at (-0.35, 0.7) {$I_{n_1}$};

    \draw[arrow] (0.15, -0.4) -- (1.25, -0.4);
    \draw[arrow] (2.4, -0.4) -- (1.55, -0.4);
    \node[below, font=\scriptsize] at (0.65, -0.45) {$R^{z^{(1)}}$};
    \node[below, font=\scriptsize] at (1.95, -0.45) {$P_{n_1}$};

    \draw[green arrow] (3.1, -0.4) -- (4.0, -0.4);
    \draw[arrow] (5.25, -0.4) -- (4.2, -0.4);
    \node[below, font=\scriptsize, color=DarkGreen] (TlW2) at (3.6, -0.45) {$\overline{W}^{\scriptscriptstyle(2)}$};
    \node[below, font=\scriptsize] at (4.8, -0.45) {$I_{n_2}$};

    \draw[arrow] (5.7, 2.65) -- (5.7, 1.55);
    \node[right, font=\scriptsize] at (5.75, 2.1) {$I_{n_2}$};
    \draw[arrow] (5.7, 0.15) -- (5.7, 1.25);
    \node[right, font=\scriptsize] at (5.75, 0.7) {$\phi$};

\end{scope}

\node[anchor=west, font=\normalsize] at (8.5, 4.0) {State sheaf $\mathcal{F}^{\Omega}$};
\node[anchor=west, font=\small] at (8.5, 2.5) {$\displaystyle\frac{d\omega}{dt}
    = -\alpha \, P_{\Omega}
      \bigl(\delta^T \delta\, \overline{\omega}\bigr),$};

\begin{scope}[shift={(0,-5.0)}]

    \node[vertex, draw=DarkGreen] (BvW1) at (0, 3.2) {};
    \node[vertex, draw=DarkRed] (BvIR) at (0, 0) {};
    \node[vertex, draw=DarkOrange1] (BvPW2) at (3.5, 0) {};
    \node[vertex, draw=DarkRed] (BvII) at (6.8, 0) {};
    \node[vertex, draw=DarkRed] (BvI) at (6.8, 3.2) {};

    \node[above left, color=DarkGreen] (BlW1) at (BvW1) {$\overline{W}^{\scriptscriptstyle(1)}$};
    \node[below left, font=\scriptsize] at (BvIR) {$(I_{n_1} \;\; R^{z^{(1)}})$};
    \node[below=1pt, font=\scriptsize] (BlPW2) at (BvPW2) {$(P_{n_1} \;\; \textcolor{DarkGreen}{\overline{W}^{\scriptscriptstyle(2)}})$};
    \node[below right, font=\scriptsize] at (BvII) {$(I_{n_2} \;\; \phi)$};
    \node[above right] at (BvI) {$I_{n_2}$};

    \draw[thick] (BvW1) -- (BvIR);
    \draw[thick] (BvIR) -- (BvPW2);
    \draw[thick] (BvPW2) -- (BvII);
    \draw[thick] (BvII) -- (BvI);

    \node[edge stalk v] (Bes1) at (0, 1.6) {};
    \node[edge stalk h] (Bes2) at (1.75, 0) {};
    \node[edge stalk h] (Bes3) at (5.15, 0) {};
    \node[edge stalk v] (Bes4) at (6.8, 1.6) {};

    \draw[arrow] (-0.3, 3.05) -- (-0.3, 1.75);
    \node[left, font=\scriptsize] at (-0.35, 2.4) {$\overline{\mathbf{x}}$};
    \draw[green arrow] (-0.3, 0.15) -- (-0.3, 1.45);
    \node[left, font=\tiny] (Blz1e1) at (-0.35, 0.8) {$\begin{bmatrix} \textcolor{DarkGreen}{z^{\scriptscriptstyle(1)}} \\ 0 \end{bmatrix}$};

    \draw[green arrow] (0.15, -0.4) -- (1.55, -0.4);
    \node[below, font=\tiny] (Blz1e2) at (0.75, -0.45) {$\begin{bmatrix} 0 \\ \textcolor{DarkGreen}{z^{\scriptscriptstyle(1)}} \end{bmatrix}$};
    \draw[green arrow] (2.65, -0.4) -- (1.95, -0.4);
    \node[below, font=\tiny] (Bla1e2) at (2.4, -0.45) {$\begin{bmatrix} \textcolor{DarkGreen}{\overline{a}^{\scriptscriptstyle(1)}}  \\ 0 \end{bmatrix}$};

    \draw[green arrow] (4.35, -0.4) -- (4.95, -0.4);
    \node[below, font=\tiny] (Bla1e3) at (4.5, -0.45) {$\begin{bmatrix} 0 \\ \textcolor{DarkGreen}{\overline{a}^{\scriptscriptstyle(1)}}  \end{bmatrix}$};
    \draw[green arrow] (6.65, -0.4) -- (5.35, -0.4);
    \node[below, font=\tiny] (Blz2e3) at (6.1, -0.45) {$\begin{bmatrix} \textcolor{DarkGreen}{z^{\scriptscriptstyle(2)}} \\ 0 \end{bmatrix}$};

    \draw[arrow] (7.1, 3.05) -- (7.1, 1.75);
    \node[right, font=\scriptsize] at (7.15, 2.4) {$\mathbf{y}$};
    \draw[green arrow] (7.1, 0.15) -- (7.1, 1.45);
    \node[right, font=\tiny] (Blz2e4) at (7.15, 0.8) {$\begin{bmatrix} 0 \\ \textcolor{DarkGreen}{z^{\scriptscriptstyle(2)}} \end{bmatrix}$};

\end{scope}

\node[anchor=west, font=\normalsize] at (8.5, -1.8) {Parameter sheaf $\mathcal{H}^{W}$};
\node[anchor=west, font=\small] at (8.5, -3.3) {$\displaystyle
\frac{d\delta}{dt} =  -\beta\, \Pi_W\bigl(\delta\, \overline{\omega}\, \overline{\omega}^T\bigr)$};



\draw[link to]
    (BlW1.north) to[out=100, in=-110] (TlW1.south);

\coordinate (z1fork) at (0.45, -3.2);
\draw[link plain] (Tlz1.south) -- (z1fork);
\draw[link to] (z1fork) to[out=-150, in=70] (Blz1e1.north);
\draw[link to] (z1fork) to[out=-40, in=120] (Blz1e2.north);

\draw[link to]
    ($(BlPW2.north)!0.6!(BlPW2.north east)$) to[out=80, in=-80] (TlW2.south);

\coordinate (a1fork) at (3.5, -3.5);
\draw[link plain] (Tla1.south) -- (a1fork);
\draw[link to] (a1fork) to[out=-130, in=90] (Bla1e2.north);
\draw[link to] (a1fork) to[out=-50, in=90] (Bla1e3.north);

\coordinate (z2fork) at (6.35, -3.2);
\draw[link plain] (Tlz2.south) -- (z2fork);
\draw[link to] (z2fork) to[out=-140, in=60] (Blz2e3.north);
\draw[link to] (z2fork) to[out=-30, in=110] (Blz2e4.north);

\end{tikzpicture}

\caption{Joint dynamics on coupled sheaves. The upper diagram depicts the state sheaf $\mathcal{F}^{\Omega}$, where vertices represent network layers (pinned vertices $\overline{\mathbf{x}}$ and $\mathbf{y}$ in red, hidden states in green) and edge stalks carry restriction maps built from weights and activations. The lower diagram shows the parameters sheaf $\mathcal{H}^{W}$, where vertices store weight matrices and edge stalks encode gradients with respect to activations. Dashed blue arrows link state variables (pointing downward) and parameters (pointing upward) between the two sheaves, illustrating how the heat diffusion on both sheaves is coupled.}
\label{fig:coupled-sheaves}
\end{figure}
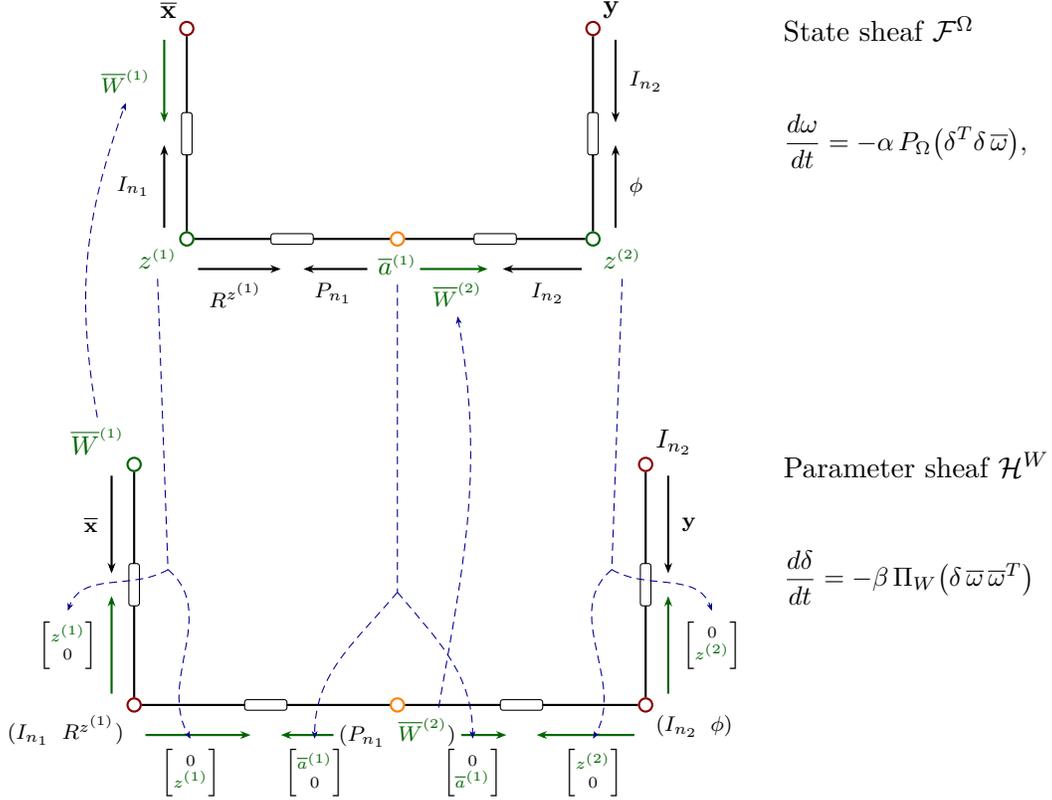

\subsection{Regularization as sheaf augmentation}\label{ap:regularization}

The regularized energy~\eqref{eq:regularized-energy-training} penalizes deviation from zero: $(\lambda/2)\|\omega\|^2 + (\mu/2)\|\delta\|_F^2$. The general form anchors to an arbitrary reference point $(\omega_0, \delta_0)$, typically the initial condition:
\begin{equation}\label{eq:regularized-general}
  \mathcal{L}_{\lambda,\mu}(\overline{\omega}, \delta)
    = \tfrac{1}{2}\|\delta\,\overline{\omega}\|^2
      + \tfrac{\lambda}{2}\|\omega - \omega_0\|^2
      + \tfrac{\mu}{2}\|\delta - \delta_0\|_F^2.
\end{equation}
The corresponding gradient descent dynamics are
\begin{equation}\label{eq:regularized-general-dynamics}
\begin{aligned}
  \frac{d\omega}{dt}
    &= -\alpha  P_\Omega\Bigl(
     \delta^T \delta\,\overline{\omega}
        + \lambda\,(\overline{\omega} - \overline{\omega}_0) \Bigr), \\[4pt]
  \frac{d\delta}{dt}
    &= -\beta\, \Pi_W \Bigl(
      \delta\, \overline{\omega}\, \overline{\omega}^T
        + \mu\, (\delta - \delta_0) \Bigr).
\end{aligned}
\end{equation}

Both regularization terms admit sheaf-theoretic interpretations as \emph{weighted reluctance}~\cite{hansen2021opinion,bosca2026selective}. The idea is to augment the graph with additional edges that penalize departure from a reference value, converting a penalty term in the energy into an ordinary discrepancy term on an enlarged sheaf.

For the cochain term, add a \emph{reluctance edge} from each dynamic vertex $v$ to a new \emph{anchor vertex} $v_0$ holding the reference value $\omega_{0,v}$. The restriction maps on this edge are both $\sqrt{\lambda}\, I$, so the edge discrepancy is $\sqrt{\lambda}(\omega_v - \omega_{0,v})$ and the edge potential is $(\lambda/2)\|\omega_v - \omega_{0,v}\|^2$. Summing over all dynamic vertices recovers the cochain penalty.

For the weight term, an analogous construction operates on the parameter sheaf $\mathcal{H}^{W}$. A reluctance edge connects each weight vertex to an anchor holding $\delta_0$, with restriction maps $\sqrt{\mu}\, I$. Diffusion on the augmented parameter sheaf produces the $\mu(\delta - \delta_0)$ term in the weight dynamics.

The augmented graph thus has the same topology as the original neural graph, plus one reluctance edge per dynamic vertex (for cochain regularization) and one per weight edge (for structure regularization). On this enlarged graph, the regularized dynamics~\eqref{eq:regularized-general-dynamics} are unregularized joint dynamics: gradient descent on the total discrepancy of the augmented sheaf, with no additional penalty terms. This is the diagrammatic realization referred to in \cref{sssec:regularization}.

\begin{figure}[ht]
\centering
\begin{subfigure}[b]{0.43\textwidth}
\centering
\raisebox{10pt}{%
\resizebox{\linewidth}{!}{%
\begin{tikzpicture}[
    vertex/.style={circle, fill=white, draw=black, thick, minimum size=5pt, inner sep=0pt},
    edge stalk h/.style={rectangle, rounded corners=1pt, fill=white, draw=black, minimum width=16pt, minimum height=4pt, inner sep=0pt},
    edge stalk v/.style={rectangle, rounded corners=1pt, fill=white, draw=black, minimum width=4pt, minimum height=16pt, inner sep=0pt},
    arrow/.style={-{Stealth[length=4pt]}, thick},
    green arrow/.style={-{Stealth[length=4pt]}, thick, color=DarkGreen},
    every node/.style={font=\small},
]

\node[vertex, draw=DarkRed] (vx) at (0, 2.8) {};
\node[vertex, draw=DarkGreen] (vz1) at (0, 0) {};
\node[vertex, draw=DarkOrange1] (va1) at (2.8, 0) {};
\node[vertex, draw=DarkGreen] (vz2) at (5.4, 0) {};
\node[vertex, draw=DarkRed] (vy) at (5.4, 2.8) {};

\node[vertex, draw=DarkRed] (v0top) at (2.8, 2.8) {};      
\node[vertex, draw=DarkRed] (v0z1) at (0, -2.8) {};         
\node[vertex, draw=DarkRed] (v0z2) at (5.4, -2.8) {};       

\node[above left] at (vx) {$\overline{\mathbf{x}}$};
\node[left, color=DarkGreen] at (vz1) {$z^{\scriptscriptstyle(1)}$};
\node[below=1pt, font=\tiny] at (va1) {$\begin{bmatrix} \textcolor{DarkGreen}{a^{\scriptscriptstyle(1)}} \\ \mathbf{1} \end{bmatrix}$};
\node[right, color=DarkGreen] at (vz2) {$z^{\scriptscriptstyle(2)}$};
\node[above right] at (vy) {$\mathbf{y}$};
\node[above] at (2.55, 2.8) {$\mathbf{0}$};
\node[below left] at (v0z1) {$\mathbf{0}$};
\node[below right] at (v0z2) {$\mathbf{0}$};

\draw[thick] (vx) -- (vz1);
\draw[thick] (vz1) -- (va1);
\draw[thick] (va1) -- (vz2);
\draw[thick] (vz2) -- (vy);

\node[edge stalk v] at (0, 1.4) {};
\node[edge stalk h] at (1.4, 0) {};
\node[edge stalk h] at (4.1, 0) {};
\node[edge stalk v] at (5.4, 1.4) {};

\draw[green arrow] (-0.3, 2.65) -- (-0.3, 1.55);
\draw[arrow] (-0.3, 0.15) -- (-0.3, 1.25);
\node[left, font=\scriptsize, color=DarkGreen] at (-0.35, 2.1) {$\overline{W}^{\scriptscriptstyle(1)}$};
\node[left, font=\scriptsize] at (-0.35, 0.7) {$I_{n_1}$};

\draw[arrow] (0.15, -0.4) -- (1.25, -0.4);
\draw[arrow] (2.4, -0.4) -- (1.55, -0.4);
\node[below, font=\scriptsize] at (0.65, -0.45) {$R^{z^{(1)}}$};
\node[below, font=\scriptsize] at (1.95, -0.45) {$P_{n_1}$};

\draw[green arrow] (3.2, -0.4) -- (4.0, -0.4);
\draw[arrow] (5.25, -0.4) -- (4.2, -0.4);
\node[below, font=\scriptsize, color=DarkGreen] at (3.6, -0.45) {$\overline{W}^{\scriptscriptstyle(2)}$};
\node[below, font=\scriptsize] at (4.8, -0.45) {$I_{n_2}$};

\draw[arrow] (5.7, 2.65) -- (5.7, 1.55);
\node[right, font=\scriptsize] at (5.75, 2.1) {$I_{n_2}$};
\node[right, font=\scriptsize] at (5.5, 1.4) {$\nabla U_{\text{out}}$};
\draw[arrow] (5.7, 0.15) -- (5.7, 1.25);
\node[right, font=\scriptsize] at (5.75, 0.7) {$I_{n_2}$};


\draw[thick] (v0top) -- (va1);
\node[edge stalk v] at (2.8, 1.4) {};

\draw[arrow] (2.6, 2.6) -- (2.6, 1.7);
\draw[arrow] (2.6, 0.25) -- (2.6, 1.1);
\node[left, font=\scriptsize] at (2.6, 2.15) {$I$};
\node[left, font=\scriptsize] at (2.6, 0.65) {$\sqrt{\lambda}\, P$};
\node[left, font=\scriptsize] at (2.8, 1.4) {$\nabla U_{\text{reg}}$};

\draw[thick] (vz1) -- (v0z1);
\node[edge stalk v] at (0, -1.4) {};

\draw[arrow] (-0.3, -0.15) -- (-0.3, -1.25);
\draw[arrow] (-0.3, -2.65) -- (-0.3, -1.55);
\node[left, font=\scriptsize] at (-0.35, -0.7) {$\sqrt{\lambda}\, I$};
\node[left, font=\scriptsize] at (-0.35, -2.1) {$I$};
\node[left, font=\scriptsize] at (0, -1.4) {$\nabla U_{\text{reg}}$};

\draw[thick] (vz2) -- (v0z2);
\node[edge stalk v] at (5.4, -1.4) {};

\draw[arrow] (5.7, -0.15) -- (5.7, -1.25);
\draw[arrow] (5.7, -2.65) -- (5.7, -1.55);
\node[right, font=\scriptsize] at (5.75, -0.7) {$\sqrt{\lambda}\, I$};
\node[right, font=\scriptsize] at (5.75, -2.1) {$I$};
\node[right, font=\scriptsize] at (5.6, -1.4) {$\nabla U_{\text{reg}}$};

\end{tikzpicture}%
}
}
\caption{State sheaf $\mathcal{F}^{\Omega}$}
\label{fig:reg-opinion}
\end{subfigure}
\hfill
\begin{subfigure}[b]{0.53\textwidth}
\centering
\resizebox{\linewidth}{!}{%
\begin{tikzpicture}[
    vertex/.style={circle, fill=white, draw=black, thick, minimum size=5pt, inner sep=0pt},
    edge stalk h/.style={rectangle, rounded corners=1pt, fill=white, draw=black, minimum width=16pt, minimum height=4pt, inner sep=0pt},
    edge stalk v/.style={rectangle, rounded corners=1pt, fill=white, draw=black, minimum width=4pt, minimum height=16pt, inner sep=0pt},
    arrow/.style={-{Stealth[length=4pt]}, thick},
    green arrow/.style={-{Stealth[length=4pt]}, thick, color=DarkGreen},
    every node/.style={font=\small},
    orange arrow/.style={-{Stealth[length=4pt]}, thick, color=DarkOrange1},
]

\node[vertex, draw=DarkGreen] (vW1) at (0, 3.2) {};
\node[vertex, draw=DarkRed] (vIR) at (0, 0) {};
\node[vertex, draw=DarkOrange1] (vPW2) at (3.5, 0) {};
\node[vertex, draw=DarkRed] (vII) at (6.8, 0) {};
\node[vertex, draw=DarkRed] (vI) at (6.8, 3.2) {};

\node[vertex, draw=DarkRed] (v0top) at (0, 5.8) {};       
\node[vertex, draw=DarkRed] (v0mid) at (3.5, 3.2) {};     

\node[below left, color=DarkGreen, font=\scriptsize, yshift=7pt,] at (vW1) {$\overline{W}^{\scriptscriptstyle(1)}$};
\node[below left, font=\scriptsize] at (vIR) {$(I_{n_1} \;\; R^{z^{(1)}})$};
\node[below=1pt, font=\scriptsize] at (vPW2) {$(P_{n_1} \;\; \textcolor{DarkGreen}{\overline{W}^{\scriptscriptstyle(2)}})$};
\node[below right, font=\scriptsize] at (vII) {$(I_{n_2} \;\; I_{n_2})$};
\node[above right] at (vI) {$I_{n_2}$};
\node[above] at (-.3, 5.8) {$\mathbf{0}$};
\node[above] at (3.25, 3.2) {$\mathbf{0}$};

\draw[thick] (vW1) -- (vIR);
\draw[thick] (vIR) -- (vPW2);
\draw[thick] (vPW2) -- (vII);
\draw[thick] (vII) -- (vI);

\node[edge stalk v] at (0, 1.6) {};
\node[edge stalk h] at (1.75, 0) {};
\node[edge stalk h] at (5.15, 0) {};
\node[edge stalk v] at (6.8, 1.6) {};


\draw[arrow] (-0.3, 3.) -- (-0.3, 1.75);
\node[left, font=\scriptsize] at (-0.35, 2.4) {$\overline{\mathbf{x}}$};
\draw[green arrow] (-0.3, 0.15) -- (-0.3, 1.45);
\node[left, font=\tiny] at (-0.35, 0.8) {$\begin{bmatrix} \textcolor{DarkGreen}{z^{\scriptscriptstyle(1)}} \\ 0 \end{bmatrix}$};

\draw[green arrow] (0.15, -0.4) -- (1.55, -0.4);
\node[below, font=\tiny] at (0.75, -0.45) {$\begin{bmatrix} 0 \\ \textcolor{DarkGreen}{z^{\scriptscriptstyle(1)}} \end{bmatrix}$};
\draw[green arrow] (2.65, -0.4) -- (1.95, -0.4);
\node[below, font=\tiny] at (2.4, -0.45) {$\begin{bmatrix}  \textcolor{DarkGreen}{\overline{a}^{\scriptscriptstyle(1)}}\\ 0\end{bmatrix}$};

\draw[green arrow] (4.35, -0.4) -- (4.95, -0.4);
\node[below, font=\tiny] at (4.5, -0.45) {$\begin{bmatrix} 0\\ \textcolor{DarkGreen}{\overline{a}^{\scriptscriptstyle(1)}}  \end{bmatrix}$};
\draw[green arrow] (6.65, -0.4) -- (5.35, -0.4);
\node[below, font=\tiny] at (6.1, -0.45) {$\begin{bmatrix} \textcolor{DarkGreen}{z^{\scriptscriptstyle(2)}} \\ 0 \end{bmatrix}$};

\draw[arrow] (7.1, 3.05) -- (7.1, 1.75);
\node[right, font=\scriptsize] at (7.15, 2.4) {$\mathbf{y}$};
\node[right, font=\scriptsize] at (6.95, 1.6) {$\nabla U_{\text{out}}$};
\draw[green arrow] (7.1, 0.15) -- (7.1, 1.45);
\node[right, font=\tiny] at (7.15, 0.8) {$\begin{bmatrix} 0 \\ \textcolor{DarkGreen}{z^{\scriptscriptstyle(2)} }\end{bmatrix}$};


\draw[thick] (v0top) -- (vW1);
\node[edge stalk v] at (0, 4.5) {};

\draw[arrow] (-0.3, 5.65) -- (-0.3, 4.65);
\draw[arrow] (-0.3, 3.4) -- (-0.3, 4.35);
\node[left, font=\scriptsize] at (-0.35, 5.15) {$I$};
\node[left, font=\scriptsize] at (-0.35, 3.85) {$\sqrt{\mu}\, I$};
\node[left, font=\scriptsize] at (0, 4.5) {$\nabla U_{\text{reg}}$};

\draw[thick] (v0mid) -- (vPW2);
\node[edge stalk v] at (3.5, 1.6) {};

\draw[arrow] (3.3, 3.) -- (3.3, 1.95);
\draw[arrow] (3.3, 0.3) -- (3.3, 1.25);
\node[left, font=\scriptsize] at (3.3, 2.45) {$I$};
\node[left, font=\scriptsize] at (3.3, 0.75) {$\sqrt{\mu}\, P$};
\node[left, font=\scriptsize] at (3.5, 1.6) {$\nabla U_{\text{reg}}$};

\end{tikzpicture}%
}
\caption{Parameter sheaf $\mathcal{H}^{W}$}
\label{fig:reg-structure}
\end{subfigure}

\caption{Regularization as sheaf augmentation for a 1-hidden-layer network. Each free variable is anchored to its own dummy vertex pinned at $\mathbf{0}$, with restriction maps $\sqrt{\lambda}\,I$ (or $\sqrt{\mu}\,I$) penalizing deviation from zero. Separate parent vertices allow for different types of regularization. \textbf{(a)} In the state sheaf $\mathcal{F}^\Omega$, three regularization edges anchor the free cochains $z^{(1)}$, $\overline{a}^{(1)}$, and $z^{(2)}$ toward zero, adding $\lambda I$ to the diagonal of the Laplacian. \textbf{(b)} In the parameter sheaf $\mathcal{H}^{W}$, regularization edges anchor the trainable restriction maps $\overline{W}^{(1)}$ and $\overline{W}^{(2)}$ toward zero, contributing $\mu I$ to the weight dynamics. Green elements indicate the trainable variables.}
\label{fig:regularization}
\end{figure}
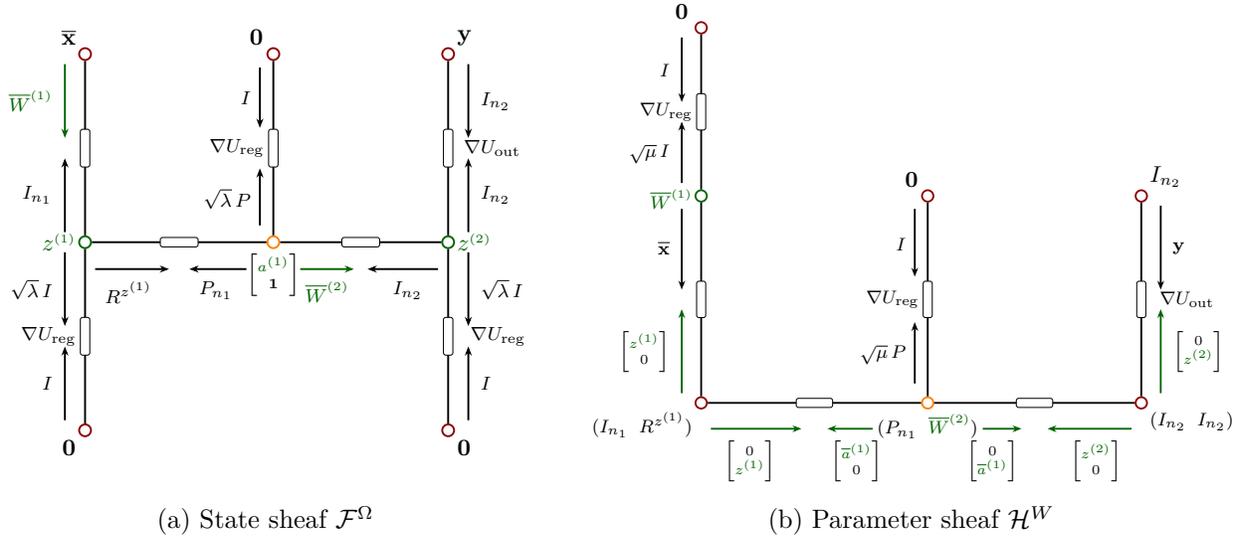

Setting $\omega_0 = 0$ and $\delta_0 = 0$ recovers~\eqref{eq:regularized-energy-training}: the anchors hold zero values, and the reluctance edges penalize the norms $\|\omega\|$ and $\|\delta\|_F$ directly. This is the standard weight decay interpretation. Setting $(\omega_0, \delta_0)$ to the initial condition produces the form used in~\cite[\S{}C.1]{bosca2026selective}, where reluctance models agents' resistance to changing their initial beliefs or communication patterns. 


\subsection{Edge potential formulas}\label{ap:edge-potentials}

\Cref{sssec:loss-functions} introduced the discrepancy framework: a loss fits this framework when $U_{\mathrm{out}}(\hat{y}, \mathbf{y}) = f(\hat{y} - \mathbf{y})$ for a convex function $f$. Each such $f$ produces a nonlinear sheaf Laplacian $L_\mathcal{F}^{\nabla f} = \delta^T \circ \nabla f \circ \delta$ in the sense of Hansen and Ghrist~\cite[\S10]{hansen2021opinion}. We collect the explicit formulas here.

Throughout, $d = \hat{y} - \mathbf{y} \in \mathbb{R}^{n_{k+1}}$ denotes the prediction error, and the gradient (or subgradient) is taken componentwise.

\paragraph{Squared error.}
The potential $f(d) = \frac{1}{2}\|d\|^2$ has gradient $(\nabla f(d))_i = d_i$, recovering the standard (linear) sheaf Laplacian. This is the loss implicit in the convergence theorems of \cref{sec:convergence}.

\paragraph{L1 loss.}
The potential $f(d) = \|d\|_1 = \sum_i |d_i|$ has subgradient $(\partial f(d))_i = \mathrm{sign}(d_i)$, with $\mathrm{sign}(0) \in [-1, 1]$. The resulting Laplacian penalizes all nonzero discrepancies equally, regardless of magnitude.

\paragraph{$p$-norm loss.}
For $1 < p < \infty$, the potential $f(d) = \frac{1}{p}\sum_i |d_i|^p$ has gradient
\begin{equation}\label{eq:p-norm-gradient}
  (\nabla f(d))_i = |d_i|^{p-2}\, d_i.
\end{equation}
The case $p = 2$ reduces to squared error. For $1 < p < 2$, the gradient grows sublinearly in $|d_i|$, placing relatively more weight on small discrepancies than squared error does. For $p > 2$, the gradient grows superlinearly, emphasizing large discrepancies.

\paragraph{Huber loss.}
The Huber potential with threshold $\tau > 0$ interpolates between squared error (for small discrepancies) and L1 (for large ones):
\begin{equation}\label{eq:huber-potential}
  f_\tau(d)_i
    = \begin{cases}
        \frac{1}{2}\, d_i^2
          & \text{if } |d_i| \leq \tau, \\[4pt]
        \tau\bigl(|d_i| - \tfrac{1}{2}\tau\bigr)
          & \text{if } |d_i| > \tau,
      \end{cases}
\end{equation}
with gradient
\begin{equation}\label{eq:huber-gradient}
  (\nabla f_\tau(d))_i
    = \mathrm{clip}(d_i, -\tau, \tau)
    = \begin{cases}
        d_i & \text{if } |d_i| \leq \tau, \\
        \tau\, \mathrm{sign}(d_i) & \text{if } |d_i| > \tau.
      \end{cases}
\end{equation}
The Huber Laplacian behaves like the standard sheaf Laplacian near consensus and like the L1 Laplacian far from it, combining sensitivity to small errors with robustness to outliers.

All four losses are convex and componentwise separable, so the resulting nonlinear Laplacians retain the local update structure: each vertex update depends only on the discrepancies with its neighbors.
The same family of potentials applies to the regularization edges discussed in \cref{ap:regularization}. Replacing the quadratic reluctance potential with an L1 or elastic net potential produces sparse regularization within the sheaf framework, as noted in \cref{sssec:regularization}.


\section{Extended experiments}\label{ap:extended-experiments}

This appendix collects the extended experimental results referenced in
\cref{sec:experiments}. \Cref{ap:ext-convergence} provides convergence
diagnostics for deeper architectures and phase-plane visualizations.
\Cref{ap:ext-training} reports full numerical results across all
benchmark configurations. \Cref{ap:ext-diagnostics} presents the
complete spectral and discord analyses.

\subsection{Convergence and dynamics}\label{ap:ext-convergence}

\Cref{fig:convergence-deep-sigmoid} extends the convergence
demonstration from \cref{subsec:convergence-experiments} to a
$[2,6,4,4,1]$ network with sigmoid output activation, exercising both
\cref{thm:convergence,thm:final}. \Cref{fig:phase-planes} displays the
phase-plane dynamics described in the main text, showing ReLU boundary
crossings concentrated in the first few hundred iterations across three
architectures of increasing depth.


\begin{figure}[ht]
  \centering
  \includegraphics[width=\textwidth]{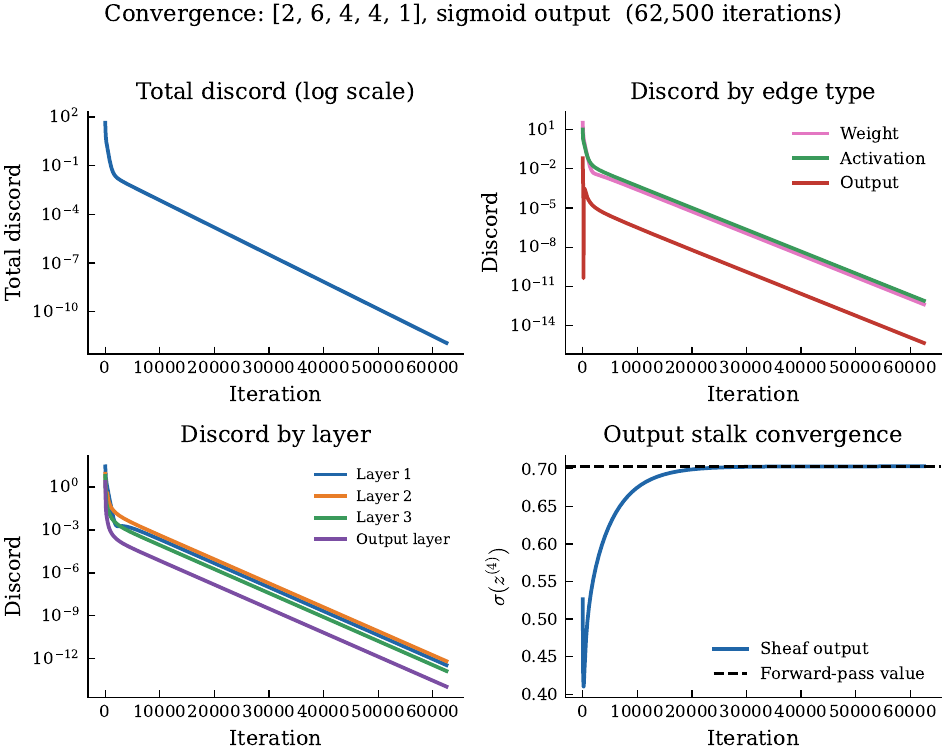}
  \caption{Convergence dynamics for a $[2,6,4,4,1]$ network with sigmoid
  output activation, $\alpha = 1.0$, $dt = 0.01$, stalks initialized
  randomly.
  \textbf{Top left:}~Total discord decreases monotonically on a log scale
  to machine precision.
  \textbf{Top right:}~Discord decomposed by edge type; the output
  activation edge (enforcing the sigmoid nonlinearity) is visible as a
  separate component.
  \textbf{Bottom left:}~Per-layer discord shows a transient phase in the first hundred iterations followed by exponential convergence.
  \textbf{Bottom right:}~The output stalk $\sigma(z^{(4)})$ converges from
  its random starting value to the forward-pass prediction (black dashed).
  This validates \cref{thm:convergence,thm:final} for deeper networks with
  nonlinear final activations.}
  \label{fig:convergence-deep-sigmoid}
\end{figure}


\begin{figure}[ht]
  \centering
  \includegraphics[width=\textwidth]{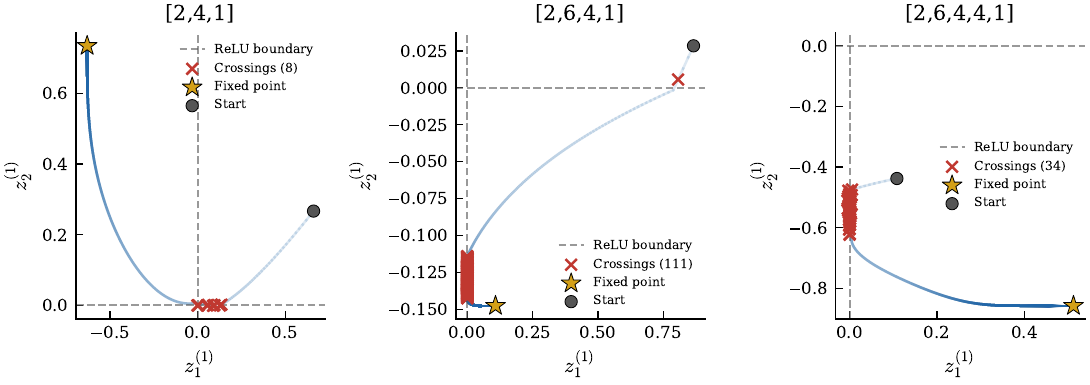}
  \caption{Phase-plane trajectories of two pre-activation components
  $(z_1^{(1)}, z_2^{(1)})$ during sheaf diffusion, for three
  architectures of increasing depth. Trajectories fade from light (early)
  to dark (late). Dashed grey lines mark the ReLU boundaries $z_j = 0$,
  red crosses mark boundary crossings, and the gold star marks the
  forward-pass fixed point. Crossings are concentrated in the first few
  hundred iterations; the trajectory then settles into a single activation
  region and converges monotonically. Deeper networks produce longer
  transients but the same qualitative pattern: ReLU boundary crossings are
  a transient phenomenon that does not impede convergence.}
  \label{fig:phase-planes}
\end{figure}

\clearpage
\subsection{Training results}\label{ap:ext-training}

\Cref{tab:full-results} reports test loss for both training methods
across all eight task$\,\times\,$depth configurations for matched iteration counts in SGD and the sheaf method ($10^5$ for 1H and $2\cdot 10^5$ for 2H).
\Cref{fig:all-training-outputs} shows the learned regression surfaces
and decision boundaries for the single-hidden-layer experiments.


\begin{table}[ht]
\centering
\caption{Training results across all task/depth configurations.}
\label{tab:full-results}
\begin{tabular}{llccccc}
\toprule
Task & Depth & Sheaf Train & SGD Train & Sheaf Test & SGD Test & Ratio \\
\midrule
  Paraboloid & 1H & 0.0756 & 0.0030 & 0.0884 & 0.0044 & 19.9$\times$ \\
  Saddle & 1H & 0.0108 & 0.0010 & 0.0106 & 0.0013 & 8.4$\times$ \\
  Circular & 1H & 0.0652 & 0.0021 & 0.0679 & 0.0031 & 22.0$\times$ \\
  Blobs & 1H & 0.1368 & 0.0657 & 0.1212 & 0.2613 & 0.5$\times$ \\
  Paraboloid & 2H & 0.0764 & 0.0065 & 0.0880 & 0.0095 & 9.2$\times$ \\
  Saddle & 2H & 0.0110 & 0.0009 & 0.0126 & 0.0011 & 11.0$\times$ \\
  Circular & 2H & 0.0180 & 0.0001 & 0.0188 & 0.0003 & 58.7$\times$ \\
  Blobs & 2H & 0.1319 & 0.0249 & 0.1323 & 0.6758 & 0.2$\times$ \\
\bottomrule
\end{tabular}
\end{table}


\begin{figure}[p]
  \centering
  \includegraphics[width=0.8\textwidth]{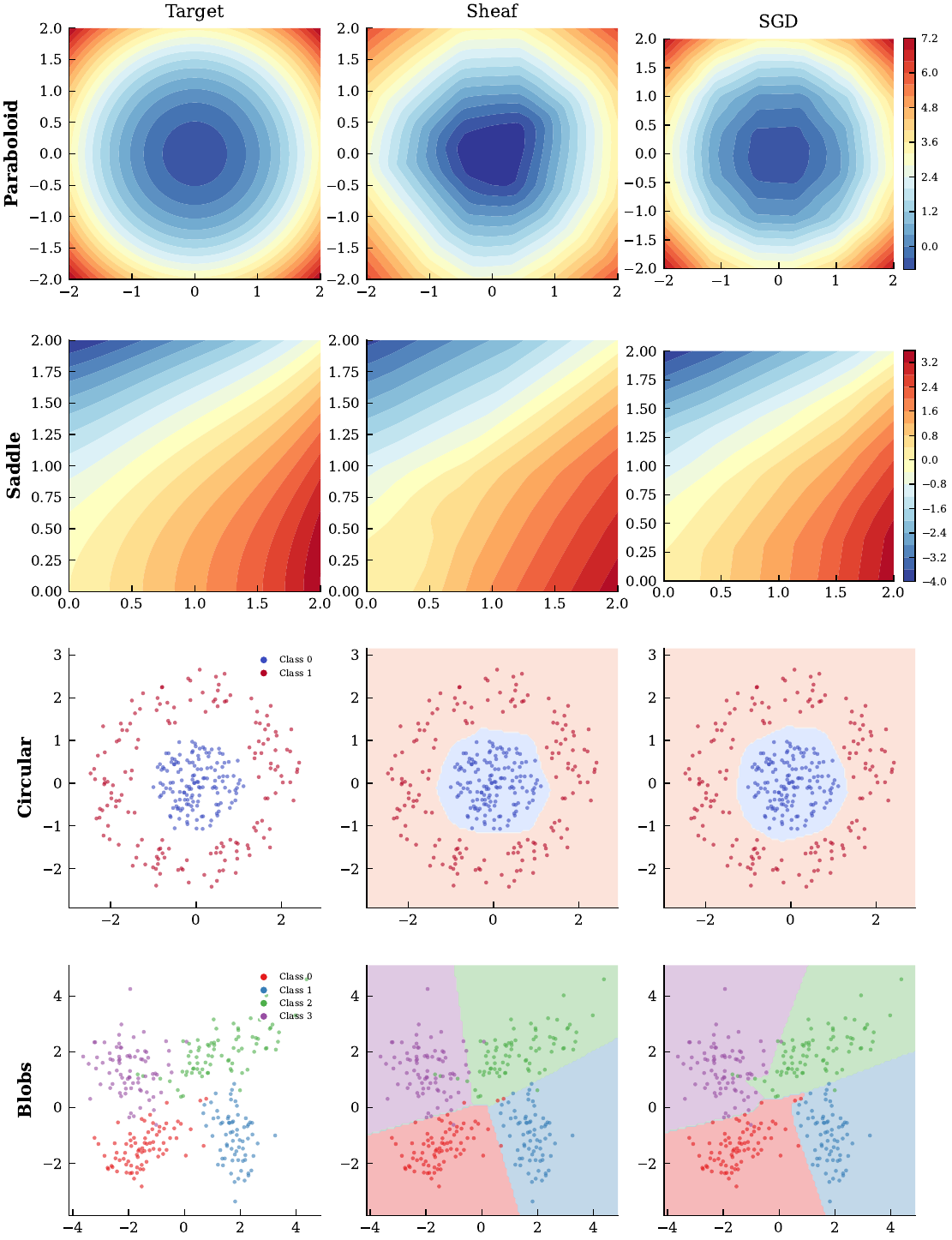}
  \caption{Learned outputs for all four benchmark tasks (single hidden
  layer). Each row shows the target function or data distribution (left),
  the sheaf-trained output (center), and the SGD-trained output (right).
  \textbf{Paraboloid and saddle}~(top two rows): filled contour plots of
  the learned regression surface on the input domain, with shared color
  scales within each row.
  \textbf{Circular and blobs}~(bottom two rows): training data colored by
  class label, with the learned decision boundary shown as a shaded
  background. Both methods produce qualitatively correct outputs across all
  tasks. The sheaf method captures the global shape but with less
  fine-grained detail in regression, consistent with the test loss ratios
  reported in \cref{tab:full-results}.}
  \label{fig:all-training-outputs}
\end{figure}

\clearpage
\subsection{Spectral and discord diagnostics}\label{ap:ext-diagnostics}

We first report the spectral analysis referenced in
\cref{subsec:diagnostics}. \Cref{tab:spectral-gap-depth} shows that
$\lambda_1$ decreases systematically with network depth at
initialization, supporting the stagnation bound discussion in
\cref{subsec:training-experiments}. \Cref{fig:eigvec-energy} displays
per-block eigenvector energy for two architectures, confirming the
Fiedler concentration on the output pre-activation stalk described in
the main text. \Cref{fig:spectral-gap-tracking,fig:spectral-comparisons,%
fig:spectral-histograms} track spectral quantities during training,
comparing sheaf-trained and SGD-trained networks.

The remaining figures present per-edge discord analyses.
\Cref{fig:discord-3H} extends the one-hidden-layer discord plots
from \cref{fig:discord-1H} to a $[2,12,8,6,1]$ network, showing that the
inactive/active decomposition persists across layers.
\Cref{fig:discord-residuals-sheaf} displays discord residuals at three
training stages, illustrating how local consistency improves as training
progresses. \Cref{tab:pinned-discord} reports aggregate discord
statistics under output-pinned inference for both training methods.

\begin{table}[ht]
\centering
\caption{Spectral gap $\lambda_1$ vs.\ depth at initialization (median over 50 inputs).}
\label{tab:spectral-gap-depth}
\begin{tabular}{lcc}
\toprule
Architecture & $\lambda_1$ (median) & $\lambda_1$ (std) \\
\midrule
  \texttt{[2,30,1]} & 0.2248 & 0.0200 \\
  \texttt{[2,10,8,1]} & 0.0602 & 0.0047 \\
  \texttt{[2,8,6,4,1]} & 0.0281 & 0.0060 \\
\bottomrule
\end{tabular}
\end{table}


\begin{figure}[ht]
  \centering
  \begin{subfigure}[t]{0.40\textwidth}
    \centering
    \includegraphics[width=\textwidth]{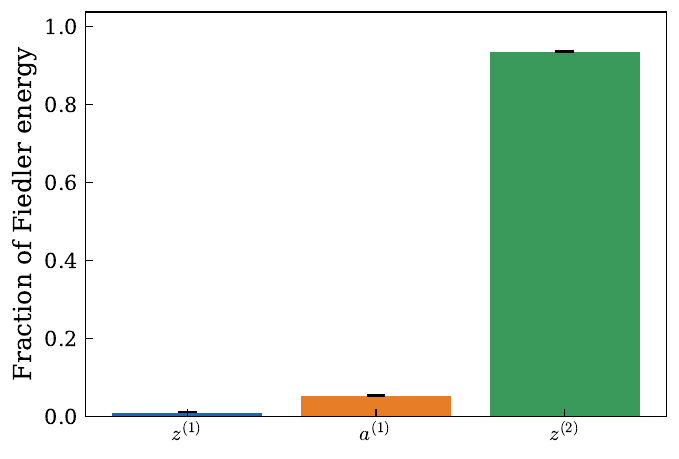}
    \caption{Fiedler energy, $[2,30,1]$.}
    \label{fig:eigvec-energy:1H-fiedler}
  \end{subfigure}
  \hfill
  \begin{subfigure}[t]{0.58\textwidth}
    \centering
    \includegraphics[width=\textwidth]{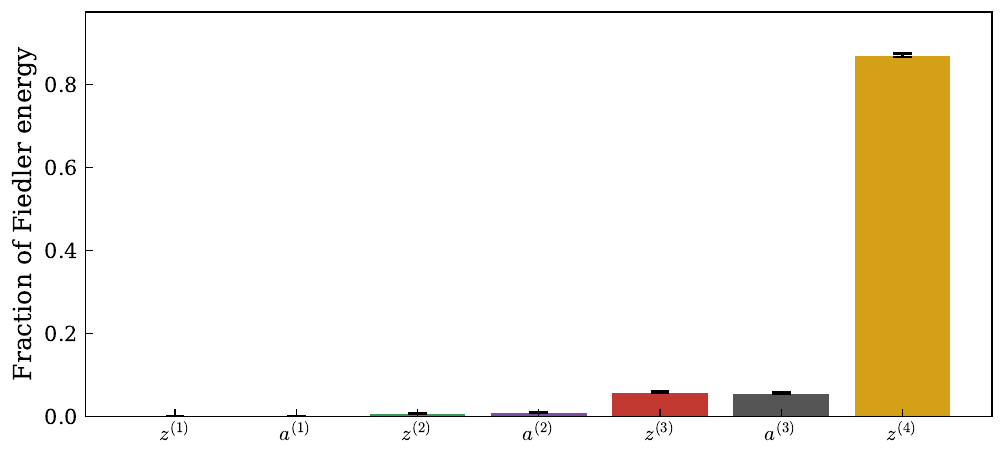}
    \caption{Fiedler energy, $[2,12,8,6,1]$.}
    \label{fig:eigvec-energy:3H-fiedler}
  \end{subfigure}

  \vspace{0.4em}

  \begin{subfigure}[t]{0.40\textwidth}
    \centering
    \includegraphics[width=\textwidth]{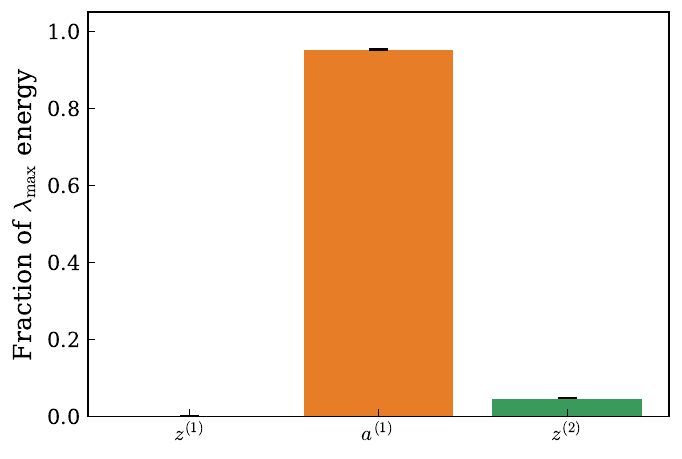}
    \caption{$\lambda_{\max}$ energy, $[2,30,1]$.}
    \label{fig:eigvec-energy:1H-lambdamax}
  \end{subfigure}
  \hfill
  \begin{subfigure}[t]{0.58\textwidth}
    \centering
    \includegraphics[width=\textwidth]{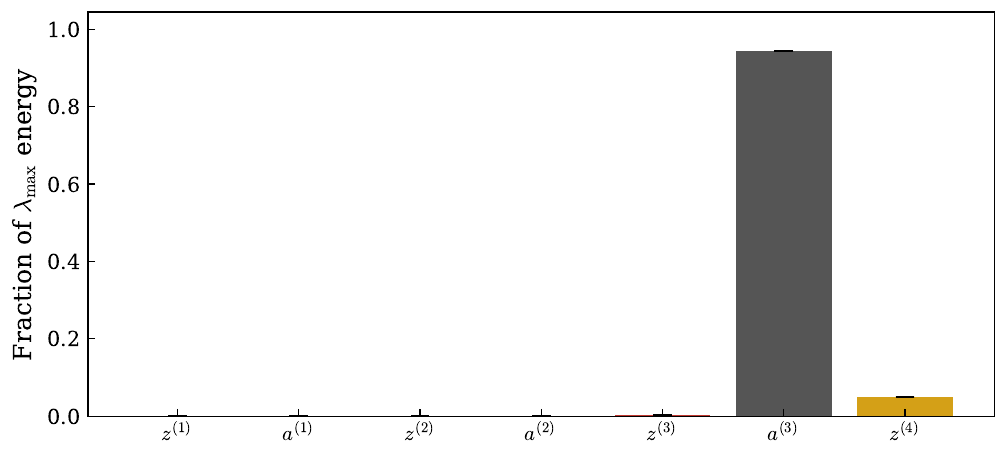}
    \caption{$\lambda_{\max}$ energy, $[2,12,8,6,1]$.}
    \label{fig:eigvec-energy:3H-lambdamax}
  \end{subfigure}

  \caption{Per-block eigenvector energy of $L_{\mathcal{F}}[\Omega,\Omega]$
    for two architectures (columns) and two eigenvectors (rows), averaged
    over $50$ random inputs with error bars showing one standard deviation.
    \textbf{Top row:}~Fiedler eigenvector ($\lambda_1$).
    In both architectures, the Fiedler energy concentrates on the output
    pre-activation stalk $z^{(k+1)}$, identifying it as the
    information-flow bottleneck of the sheaf graph.
    \textbf{Bottom row:}~$\lambda_{\max}$ eigenvector.
    The dominant eigenvector energy coincides with the post-activation block $a^{(\ell)}$ adjacent to the weight matrix with the largest operator norm.  Neither layer position nor layer width is the determining factor.}
  \label{fig:eigvec-energy}
\end{figure}


\begin{figure}[ht]
  \centering
  \begin{subfigure}[t]{0.48\textwidth}
    \centering
    \includegraphics[width=\textwidth]{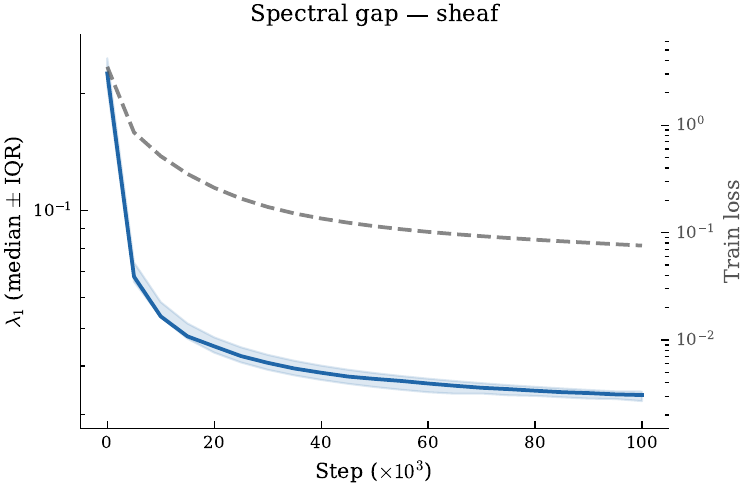}
    \caption{Sheaf training}
    \label{fig:spectral-gap-sheaf}
  \end{subfigure}
  \hfill
  \begin{subfigure}[t]{0.48\textwidth}
    \centering
    \includegraphics[width=\textwidth]{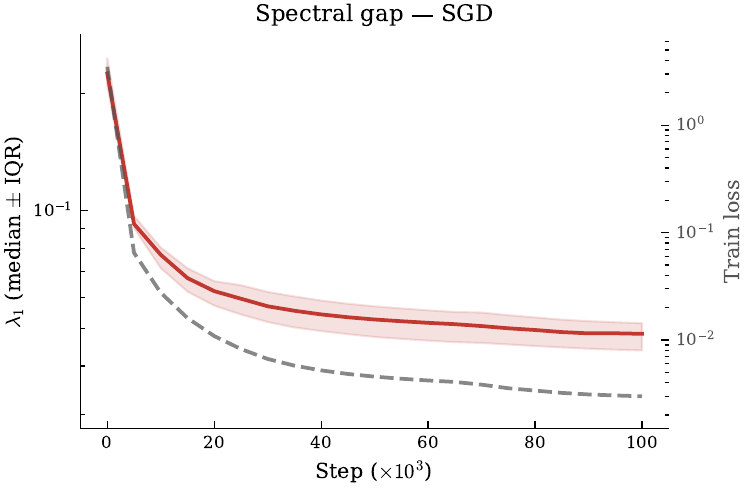}
    \caption{SGD training}
    \label{fig:spectral-gap-sgd}
  \end{subfigure}
  \caption{Spectral gap $\lambda_1$ (median $\pm$ IQR over 50 inputs) and
  training loss during training of a $[2,30,1]$ network on the paraboloid
  task. Both axes share identical limits across panels for direct
  comparison. In both methods, $\lambda_1$ drops rapidly in the first
  ${\sim}10$k steps.
  The sheaf method produces consistently smaller $\lambda_1$ throughout
  training.}
  \label{fig:spectral-gap-tracking}
\end{figure}


\begin{figure}[ht]
  \centering
  \begin{subfigure}[t]{0.32\textwidth}
    \centering
    \includegraphics[width=\textwidth]{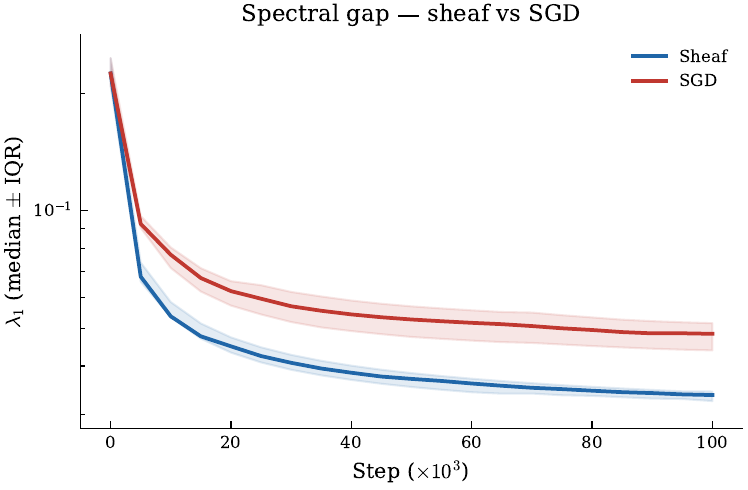}
    \caption{$\lambda_1$}
    \label{fig:spectral-gap-comparison}
  \end{subfigure}
  \hfill
  \begin{subfigure}[t]{0.32\textwidth}
    \centering
    \includegraphics[width=\textwidth]{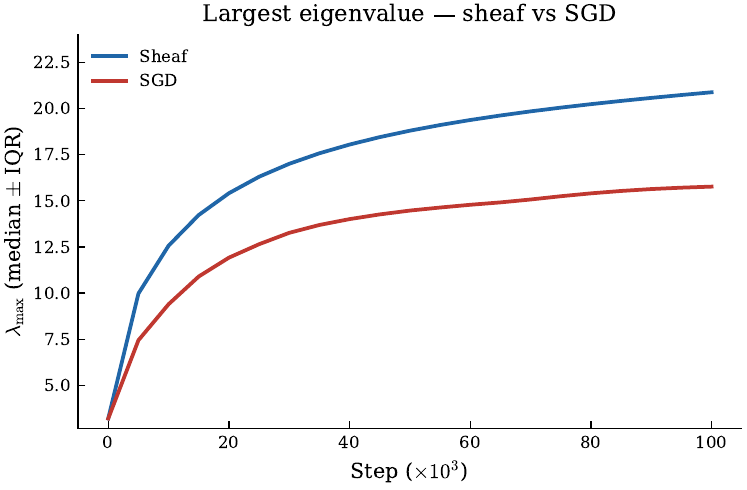}
    \caption{$\lambda_{\max}$}
    \label{fig:spectral-lmax-comparison}
  \end{subfigure}
  \hfill
  \begin{subfigure}[t]{0.32\textwidth}
    \centering
    \includegraphics[width=\textwidth]{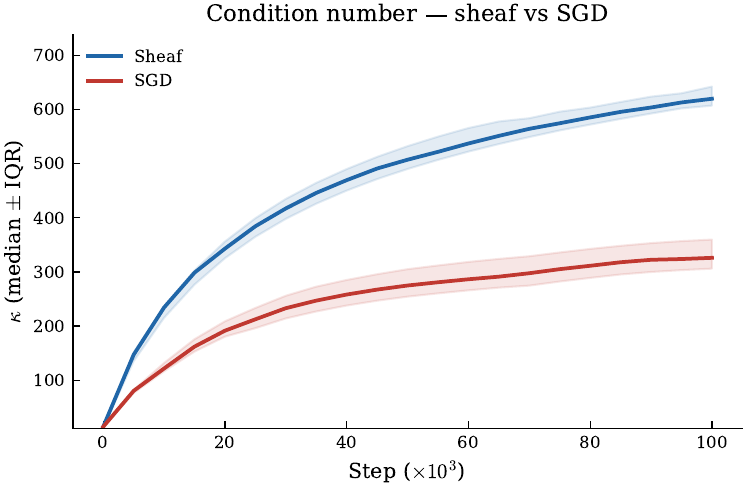}
    \caption{$\kappa = \lambda_{\max} / \lambda_1$}
    \label{fig:spectral-kappa-comparison}
  \end{subfigure}
  \caption{Spectral statistics of $L_{\mathcal{F}_t}[\Omega,\Omega]$
  during training (median $\pm$ IQR, $[2,30,1]$ on paraboloid). Both
  methods shift the spectrum toward smaller $\lambda_1$ and larger
  $\lambda_{\max}$ as training progresses, but the sheaf method produces
  consistently smaller spectral gaps \textbf{(a)} and larger condition
  numbers \textbf{(c)}. The largest eigenvalue \textbf{(b)} grows in both
  methods, with sheaf-trained
  networks producing moderately larger $\lambda_{\max}$.}
  \label{fig:spectral-comparisons}
\end{figure}


\begin{figure}[ht]
  \centering
  \begin{subfigure}[t]{0.48\textwidth}
    \centering
    \includegraphics[width=\textwidth]{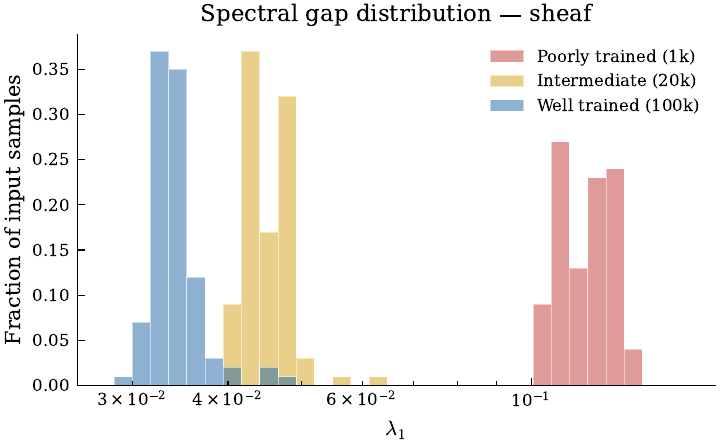}
    \caption{Sheaf training}
    \label{fig:spectral-hist-sheaf}
  \end{subfigure}
  \hfill
  \begin{subfigure}[t]{0.48\textwidth}
    \centering
    \includegraphics[width=\textwidth]{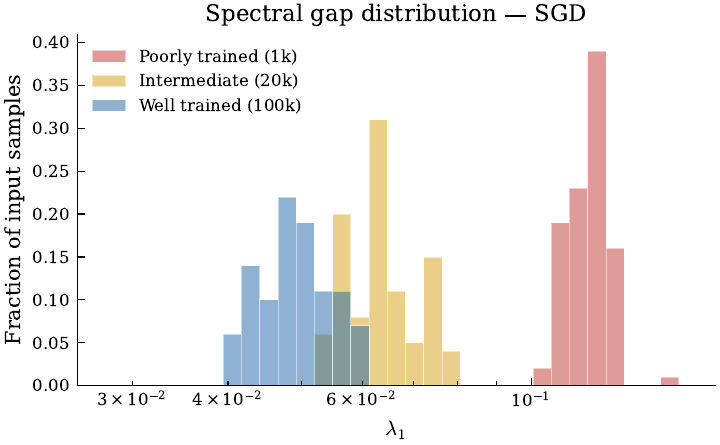}
    \caption{SGD training}
    \label{fig:spectral-hist-sgd}
  \end{subfigure}
  \caption{Distribution of the spectral gap $\lambda_1$ across 100 random
  inputs at three training stages (poorly trained at 1k steps,
  intermediate at 20k, well trained at 100k). Log-spaced bins; shared
  $x$-axis range across both panels. In both methods, the distribution
  shifts leftward (toward smaller $\lambda_1$) as training progresses,
  tightening around a stable value. The sheaf-trained distributions are
  shifted further left at all stages, consistent with the tracking plots
  in \cref{fig:spectral-gap-tracking}.}
  \label{fig:spectral-histograms}
\end{figure}




\begin{figure}[p]
  \centering
  \begin{subfigure}[t]{0.24\textwidth}
    \centering
    \includegraphics[width=\textwidth]{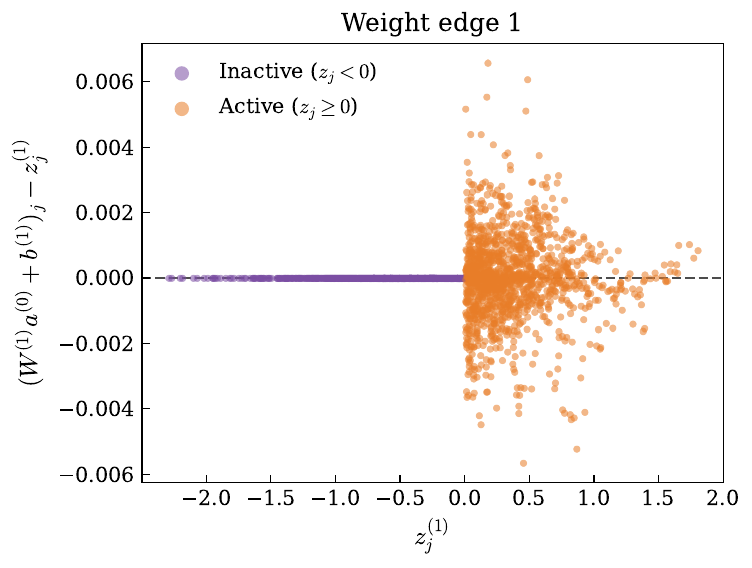}
    \caption{Weight edge~1.}
    \label{fig:discord-3H:w1}
  \end{subfigure}
  \hfill
  \begin{subfigure}[t]{0.24\textwidth}
    \centering
    \includegraphics[width=\textwidth]{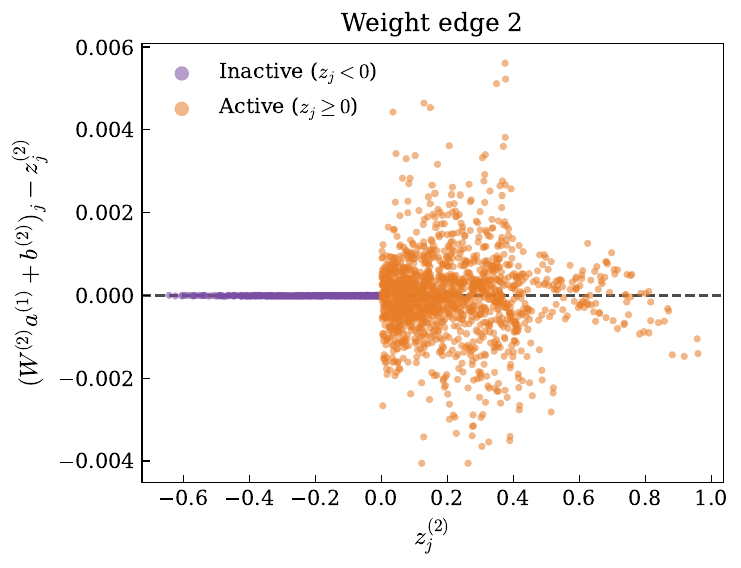}
    \caption{Weight edge~2.}
    \label{fig:discord-3H:w2}
  \end{subfigure}
  \hfill
  \begin{subfigure}[t]{0.24\textwidth}
    \centering
    \includegraphics[width=\textwidth]{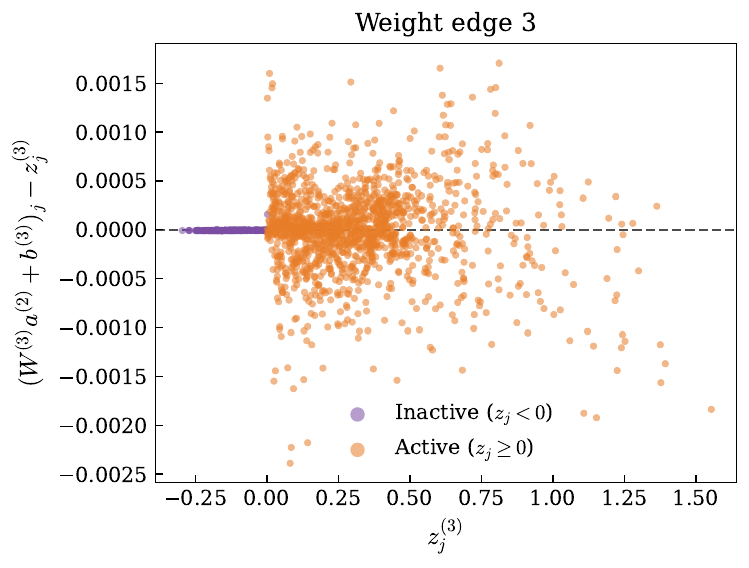}
    \caption{Weight edge~3.}
    \label{fig:discord-3H:w3}
  \end{subfigure}
  \hfill
  \begin{subfigure}[t]{0.24\textwidth}
    \centering
    \includegraphics[width=\textwidth]{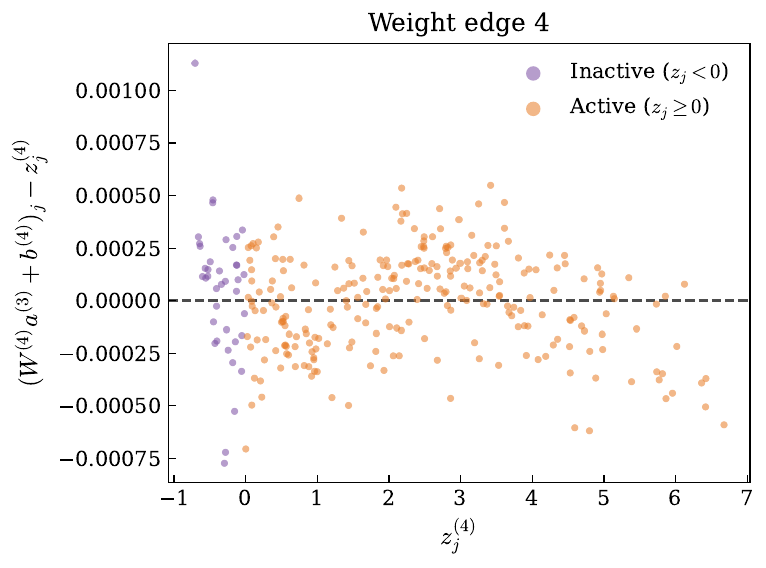}
    \caption{Weight edge~4.}
    \label{fig:discord-3H:w4}
  \end{subfigure}

  \vspace{0.6em}

  \begin{subfigure}[t]{0.32\textwidth}
    \centering
    \includegraphics[width=\textwidth]{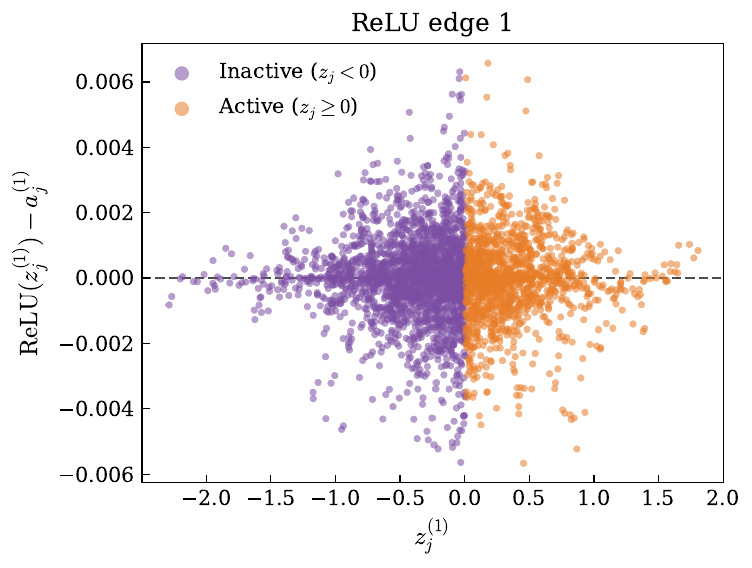}
    \caption{ReLU edge~1.}
    \label{fig:discord-3H:r1}
  \end{subfigure}
  \hfill
  \begin{subfigure}[t]{0.32\textwidth}
    \centering
    \includegraphics[width=\textwidth]{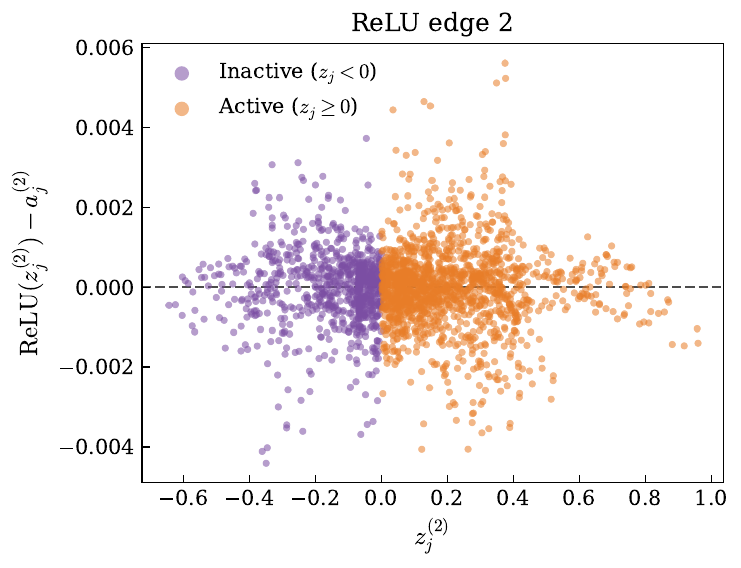}
    \caption{ReLU edge~2.}
    \label{fig:discord-3H:r2}
  \end{subfigure}
  \hfill
  \begin{subfigure}[t]{0.32\textwidth}
    \centering
    \includegraphics[width=\textwidth]{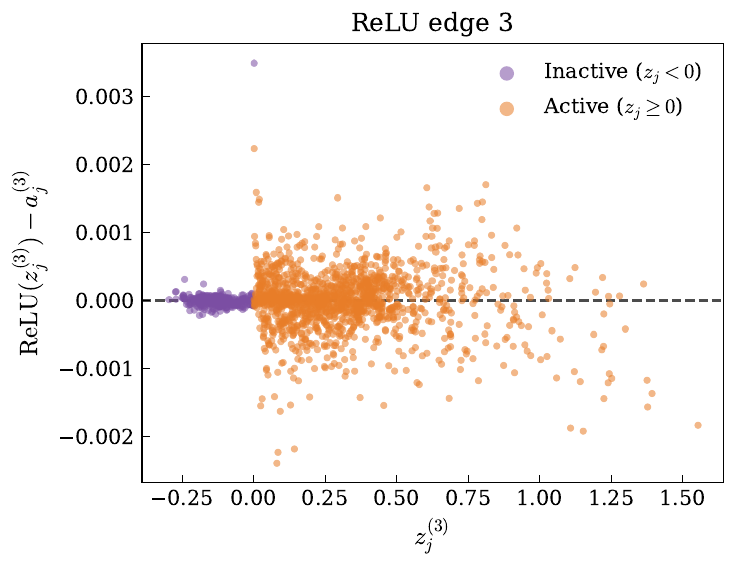}
    \caption{ReLU edge~3.}
    \label{fig:discord-3H:r3}
  \end{subfigure}

  \caption{Per-coordinate edge discord for a well-trained $[2,12,8,6,1]$
    sheaf network (three hidden layers) on the paraboloid task.
    \textbf{(a--d)}~Weight edge discord
    $(W^{(\ell)} a^{(\ell-1)} + b^{(\ell)})_j - z_j^{(\ell)}$ vs.\
    pre-activation $z_j^{(\ell)}$ for each of the four weight edges.
    Across the three first weight edges, inactive coordinates (purple) cluster tightly on the
    zero line, confirming that the ReLU edge exerts no force on dead neurons
    and the weight edge achieves zero discord. The last weight edge does not follow this pattern because the final activation in this case is the identity, so this edge acts as the output edge.
    Active coordinates (orange) show a symmetric spread around zero whose
    magnitude varies by layer, reflecting the local force balance between
    weight and activation edges at equilibrium.
    \textbf{(e--g)}~ReLU edge discord
    $\mathrm{ReLU}(z_j^{(\ell)}) - a_j^{(\ell)}$ vs.\
    $z_j^{(\ell)}$ for the three activation edges. We found no clear structure. Across all ReLU edges, we observe a decay in the number of inactive neurons (negative values of $z_j$).}
  \label{fig:discord-3H}
\end{figure}


\begin{figure}[p]
  \centering
  \includegraphics[width=0.9\textwidth]{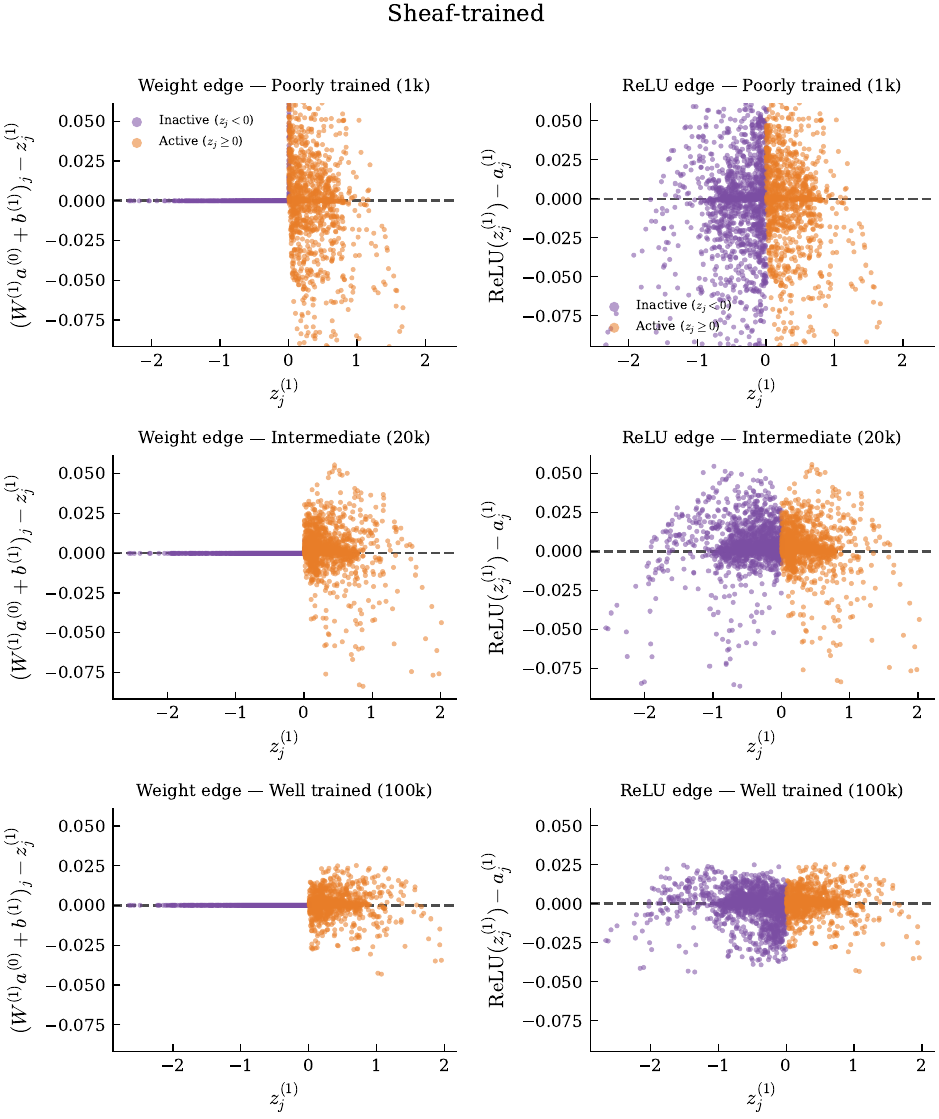}
  \caption{Per-edge discord residuals at three training stages for a
  \emph{sheaf-trained} $[2,30,1]$ network on the paraboloid task. Each row
  corresponds to a training stage (1k, 20k, 100k steps); output stalks are
  hard-pinned to true labels. $y$-axis limits are fixed across all rows
  (set by the intermediate stage) to show the decrease in discord magnitude
  with training. Points are colored by activation state: active
  ($z_j \geq 0$, orange) or inactive ($z_j < 0$, purple).
  \textbf{Left:}~Weight edge discord residual
  $(W^{(1)} a^{(0)} + b^{(1)})_j - z_j^{(1)}$.
  \textbf{Right:}~ReLU edge discord residual
  $\mathrm{ReLU}(z_j^{(1)}) - a_j^{(1)}$.
  In the well-trained model (bottom), inactive coordinates lie on the zero
  line and active coordinates cluster near zero, confirming local
  consistency at equilibrium.}
  \label{fig:discord-residuals-sheaf}
\end{figure}

\begin{figure}[p]
  \centering
  \includegraphics[width=0.9\textwidth]{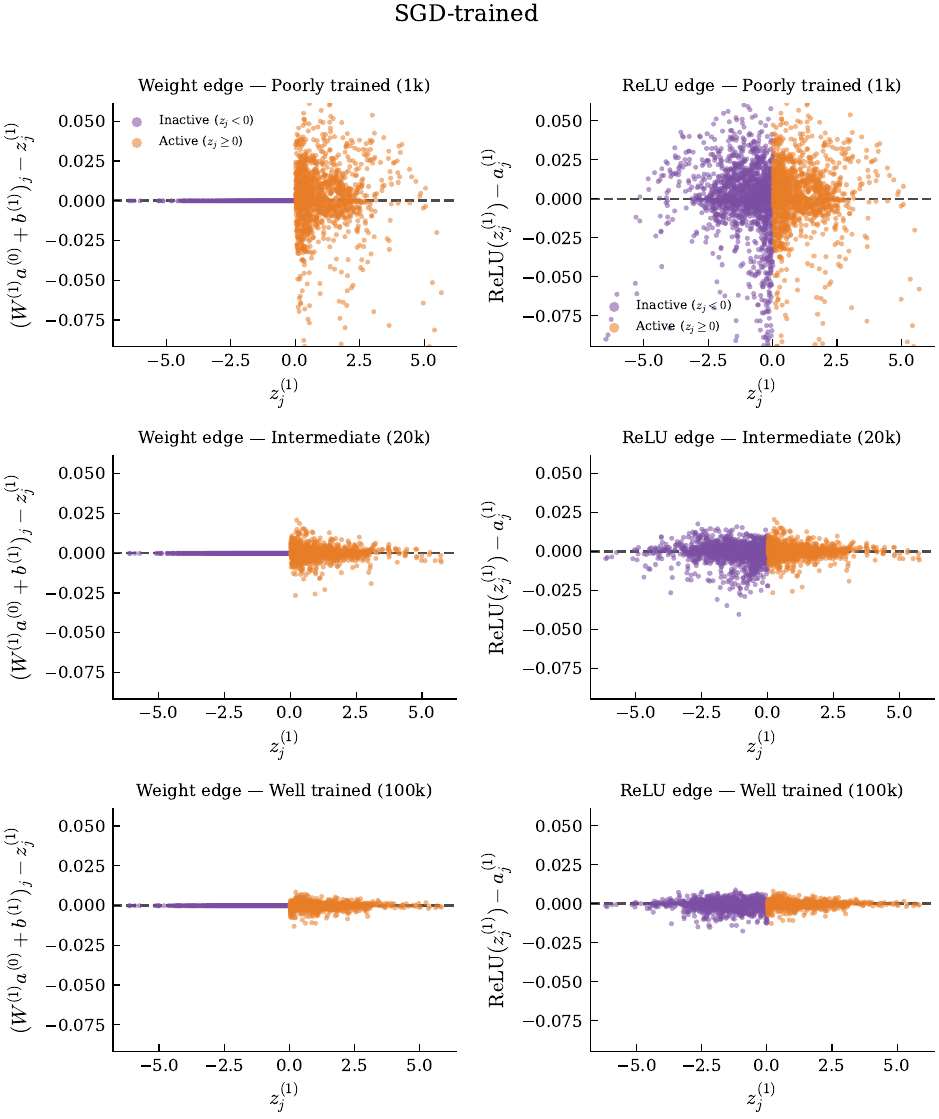}
  \caption{Same as \cref{fig:discord-residuals-sheaf} but for an
  \emph{SGD-trained} network with the same architecture and training
  budget. $y$-axis limits are shared with the sheaf figure for direct
  comparison. The SGD-trained network's weights are embedded in a
  \texttt{NeuralSheaf} and analysed via pinned diffusion under the same
  protocol. Qualitative differences in the residual patterns reflect the
  different optimization landscapes explored by the two methods.}
  \label{fig:discord-residuals-sgd}
\end{figure}


\begin{table}[ht]
\centering
\caption{Per-edge pinned discord (mean $\pm$ std over 50 test inputs). Output stalk hard-pinned to true labels.}
\label{tab:pinned-discord}
\begin{tabular}{lcc}
\toprule
Edge & Sheaf-trained & SGD-trained \\
\midrule
  W1 & 0.0007 $\pm$ 0.0010 & 0.0000 $\pm$ 0.0001 \\
  W2 & 0.0001 $\pm$ 0.0002 & 0.0000 $\pm$ 0.0000 \\
  R1 & 0.0027 $\pm$ 0.0034 & 0.0002 $\pm$ 0.0003 \\
\midrule
  Total & 0.0035 & 0.0003 \\
\bottomrule
\end{tabular}
\end{table}

\end{document}